\documentclass[12pt]{article}

\usepackage[dvips]{graphicx}
\usepackage{amsfonts}
\usepackage{amsmath}
\usepackage{mathtools}
\usepackage{amssymb}
\usepackage{amscd}
\usepackage{color}
\usepackage{amsthm}
\usepackage{indentfirst}
\usepackage[hmargin=3cm,vmargin=3cm]{geometry}

\usepackage{enumitem}

\newtheorem{thm}{Theorem}[section]
\newtheorem{thm*}{Theorem}
\newtheorem{prop}[thm]{Proposition}
\newtheorem{lma}[thm]{Lemma}

\newtheorem{clm}[thm]{Claim}

\theoremstyle{definition}

\newtheorem{df}[thm]{Definition} 


\theoremstyle{remark}
\newtheorem*{pf}{Proof}

\newtheorem{rmk}[thm]{Remark} 
\newtheorem{example}[thm]{Example}

\newtheorem{conj}[thm]{Conjecture}
\newtheorem{constraint}[thm]{Constraint}
\newtheorem{problem}[thm]{Question}

\newcommand{\R}{{\mathbb{R}}}
\newcommand{\Z}{{\mathbb{Z}}}
\newcommand{\C}{{\mathbb{C}}}
\newcommand{\Q}{{\mathbb{Q}}}

\newcommand{\bs}{\bigskip}

\newcommand{\del}{\partial}

\newcommand{\sm}[1]{C^\infty(#1)}

\newcommand{\vareps}[1]{\varepsilon_{#1}}

\newcommand\sign{\operatorname{sign}}

\newcommand{\Sum}{\Sigma}
\newcommand{\G}{\mathcal{G}}

\newcommand{\A}{\mathcal{A}}
\newcommand{\K}{\mathcal{K}}

\newcommand{\cL}{\mathcal{L}}

\newcommand{\til}[1]{\widetilde{#1}}

\newcommand{\om}{\omega}
\newcommand{\al}{\alpha}
\newcommand{\la}{\lambda}
\newcommand{\Om}{\Omega}
\newcommand{\ga}{\gamma}
\newcommand{\eps}{\epsilon}

\newcommand{\oa}{a}
\newcommand{\ob}{b}
\newcommand{\oc}{c}

\newcommand{\cA}{\mathcal{A}}
\newcommand{\cB}{\mathcal{B}}
\newcommand{\cC}{\mathcal{C}}
\newcommand{\cD}{\mathcal{D}}

\newcommand{\cF}{\mathcal{F}}
\newcommand{\cG}{\mathcal{G}}
\newcommand{\cH}{\mathcal{H}}
\newcommand{\cI}{\mathcal{I}}
\newcommand{\cJ}{\mathcal{J}}

\newcommand{\cS}{\mathcal{S}}

\newcommand{\cP}{\mathcal{P}}

\newcommand{\cN}{\mathcal{N}}

\newcommand{\rP}{\mathrm{P}}
\newcommand{\rR}{\mathrm{R}}
\newcommand{\rQ}{\mathrm{Q}}
\newcommand{\rT}{\mathrm{T}}
\newcommand{\rS}{\mathrm{S}}

\newcommand{\Foabc}{{\mathrm{Free}\langle \oa,\ob, \oc \rangle}}

\DeclareMathOperator{\trace}{\mathrm{trace}}

\DeclareMathOperator{\Ham}{\mathrm{Ham}}
\DeclareMathOperator{\Hom}{\mathrm{Hom}}
\DeclareMathOperator{\Symp}{\mathrm{Symp}}

\DeclareMathOperator{\Osc}{{\mathcal{E}}}
\DeclareMathOperator{\Max}{{\mathcal{E}^{+}}}
\DeclareMathOperator{\Min}{{\mathcal{E}^{-}}}
\DeclareMathOperator{\conjugation}{\mathrm{conj}}

\def\Hk{H^{(k)}}
\def\Fk{F^{(k)}}
\def\Gk{G^{(k)}}
\def\H2{H^{(2)}}

\DeclareMathOperator{\Aut}{\mathrm{Aut}}

\DeclareMathOperator{\aut}{\mathrm{aut}}

\DeclareMathOperator{\pow}{\mathrm{powers}}
\DeclareMathOperator{\Pow}{\mathrm{Powers}}

\newcommand{\id}{{\bf{1}}}
\newcommand{\N}{{\mathbb{N}}}

\usepackage{hyperref}

\begin{document}

\title{Autonomous Hamiltonian flows, Hofer's geometry and persistence modules}

\renewcommand{\thefootnote}{\alph{footnote}}

\author{\textsc Leonid Polterovich$^{a}$   and Egor Shelukhin}

\footnotetext[1]{Partially supported by the European Research Council Advanced grant 338809.}

\date{\today}

\maketitle

\abstract{We find robust obstructions to representing a Hamiltonian diffeomorphism as a full $k$-th power, $k \geq 2,$ and in particular, to including it into a one-parameter subgroup. The robustness is understood in the sense of Hofer's metric. Our approach is based on the theory of persistence modules applied in the context of filtered Floer homology. We present applications to geometry and dynamics of Hamiltonian diffeomorphisms.}

\tableofcontents

\section{Introduction and main results}

This paper deals with robust obstructions to representing a Hamiltonian diffeomorphism as a full $k$-th power, $k \geq 2,$ and in particular, to including it into a one-parameter subgroup. The robustness is understood in the sense of Hofer's metric. These obstructions yield applications to geometry and dynamics of Hamiltonian diffeomorphisms. On the geometric side, we prove that for certain symplectic manifolds the complement of the set of Hamiltonian diffeomorphisms admitting a root of order $k$ (and {\it a fortiori}, of autonomous Hamiltonian diffeomorphisms) contains an arbitrarily large ball. We also establish a result of a dynamical flavor providing a symplectic
take on Palis'  dictum ``Vector fields generate few diffeomorphisms": for symplectically aspherical manifolds  the subset of non-autonomous Hamiltonian diffeomorphisms contains a $C^\infty$-dense Hofer-open subset.

Our approach is based on the theory of persistence modules
applied in the context of filtered Floer homology enhanced with a special periodic automorphism.
The latter is induced by the natural action (by conjugation) of a Hamiltonian diffeomorphism $\phi$ on the
Floer homology of its power $\phi^k$.

\subsection{Distance to autonomous diffeomorphisms and $k$-th powers.}
Let $d$ be the Hofer metric on the group $\Ham(M,\omega)$ of $C^\infty$-smooth Hamiltonian diffeomorphisms
of a closed symplectic manifold $(M,\omega)$. We recall that $d(f,g)$ for $f,g \in \Ham$ is defined as the infimum over all Hamiltonian paths $\{\phi_t\}_{t \in [0,1]}$ with $\phi_0 =f,\; \phi_1 = g,$ of the quantity \[\int_{0}^{1} (\max_M H_t - \min_M H_t) dt, \] where $H_t(x) = H(t,x)$ is the time-dependent Hamiltonian that generates the path $\{\phi_t\}.$ Recall that a Hamiltonian diffeomorphism is called {\it autonomous} if it is generated by a time independent Hamiltonian function,
or, in other words, it can be included into a one parameter subgroup of $\Ham$. Denote by $\Aut \subset \Ham$
the set of all autonomous Hamiltonian diffeomorphisms. Define the quantity
$$\aut(M,\omega):= \sup_{\phi \in \Ham} d(\phi, \Aut)\;.$$


\medskip
\noindent
\begin{conj}\label{conj-main} $\aut(M,\omega)=+\infty$ for all closed symplectic manifolds $(M,\omega)$.
\end{conj}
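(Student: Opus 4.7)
The plan is to combine two ingredients: (i) a robust invariant, derived from persistence modules attached to filtered Floer homology enhanced by a periodic conjugation action, that bounds the Hofer distance $d(\phi, \Aut)$ from below; and (ii) a construction, on an arbitrary closed symplectic manifold, of Hamiltonian diffeomorphisms whose invariant can be driven to infinity.

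For (i), I would attach to each $\phi \in \Ham(M,\omega)$ and each $k \geq 2$ the filtered Floer complex $CF^*(\phi^k)$ together with the $\Z/k\Z$-action induced by conjugation by $\phi$. Passing to persistence modules this yields a $\Z/k\Z$-equivariant barcode $\cB_k(\phi)$. Because autonomous $\phi = \phi_H^1$ has $\phi^k = \phi_{kH}^1$, the natural symmetries of the iterate force $\cB_k(\phi)$ to be of a very restricted type when $\phi \in \Aut$ — morally, each generator of $CF^*(\phi^k)$ is part of a $\Z/k\Z$-orbit of generators at the same filtration level. The invariant would then be the bottleneck (or interleaving) distance of $\cB_k(\phi)$ to the subspace of such ``symmetric'' barcodes. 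Hofer-continuity of filtered Floer homology translates directly into the Lipschitz property with respect to $d$, giving the desired lower bound for $d(\phi, \Aut)$ and more generally for $d(\phi, \Pow_k)$, where $\Pow_k$ denotes the set of full $k$-th powers.

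For (ii) the strategy is a local-to-global principle. First, choose a Darboux ball $B \subset M$ and exhibit Hamiltonian diffeomorphisms $\phi_n$, supported in $B$, whose $k$-th iterates have Floer barcodes containing long bars organized into non-trivial $\Z/k\Z$-orbits, with length tending to infinity. The model examples here are monotone twists or perturbations of eigenvalues of elliptic fixed points arranged in asymmetric patterns along the filtration; the persistence-module formalism enhanced with the conjugation action should detect that no bar-by-bar $\Z/k\Z$-equivariant rearrangement is possible. Then invoke the localization property of Floer persistence under symplectic embeddings of $B$ into arbitrary $(M,\omega)$ to transplant these examples. Since $d(\phi_n, \Aut)$ must grow with the asymmetric bar length uniformly in $k$, one obtains $\aut(M,\omega) = +\infty$.

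The main obstacle is the uniformity over all $k \geq 2$ simultaneously: a given $\phi$ could conceivably fail to be a $k$-th power only for particular $k$, so one must either produce a sequence $\phi_n$ whose obstruction persists for every $k$ or combine obstructions across $k$ into a single lower bound. A secondary but serious difficulty is the Floer-theoretic foundation for general $(M,\omega)$: outside the symplectically aspherical or (semi)monotone range, one must deal with Novikov coefficients and virtual perturbation schemes, and verify that the conjugation action and its persistence-module refinement are well-defined and Hofer-continuous in that setting. I expect these two issues — uniform-in-$k$ obstruction and general virtual Floer theory — to be where the argument is genuinely hard.
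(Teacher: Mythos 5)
This is a conjecture in the paper, not a theorem: the authors explicitly say they only ``make a first step towards'' it, proving it for $\Sigma\times M$ with $\Sigma$ a surface of genus $\geq 4$ and $M$ symplectically aspherical, and they stress that even for $M=S^2$ the conjecture remains open. So there is no proof in the paper to compare against, and your proposal, if correct, would settle an open problem. It does not.

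Your ingredient (i) is, in spirit, the paper's actual machinery (the loop-rotation/conjugation $\Z_k$-action on filtered Floer homology of $\phi^k$, repackaged as a persistence module with $\Z_k$-action and a Lipschitz barcode invariant). But ingredient (ii) has a genuine gap, and it is exactly the gap the authors highlight. The paper's construction is fundamentally global: it works in a carefully chosen \emph{primitive non-simple free homotopy class} $\alpha_\lambda$ on a genus $\geq 4$ surface, precisely because (a) the incompressible embedding of the two annuli forces the fixed points of $\phi_\lambda^p$ in class $\alpha_\lambda$ to be finitely many well-separated $p$-tuples, and (b) primitivity of $\alpha$ is what makes the $\Z_k$-orbits of fixed points free and the loop-rotation operator genuinely nontrivial. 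A Darboux-ball model forces you into the contractible class, where none of this control exists: the ambient manifold's quantum homology enters, there is no obvious mechanism to produce a \emph{robust} nontrivial $\Z_k$-asymmetry in the barcode, and the ``localization'' of Floer persistence under a symplectic embedding $B\hookrightarrow M$ does not isolate the contractible-class Floer homology of a compactly supported map from the global topology of $M$. The authors' inability to handle $S^2$ is a direct witness to this difficulty. Also, your stated ``main obstacle'' (uniformity over all $k\geq 2$) is in fact a non-issue: since $\Aut\subset\Pow_k$ for every $k$, a lower bound $d(\phi,\Pow_k)\geq D$ for a \emph{single} $k$ already gives $d(\phi,\Aut)\geq D$; there is nothing to combine across $k$. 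The real difficulty you should have flagged is (ii), not uniformity in $k$.
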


\medskip
\noindent
In the present paper we make a first step towards this conjecture. Recall that $(M,\omega)$ is called symplectically
aspherical if the class of the symplectic form and the first Chern class vanish on $\pi_2(M)$.

\medskip
\noindent
\begin{thm}\label{thm-main}
Let $\Sigma$ be a closed oriented surface of genus $\geq 4$ equipped with an area form $\sigma$.
Then for every closed symplectically aspherical manifold $(M,\omega)$
$$\aut(\Sigma \times M, \sigma \oplus \omega)= +\infty\;.$$
\end{thm}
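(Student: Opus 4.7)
\begin{pfs}
The plan is to detect $k$-th roots via a $\Z/k$-equivariant structure on filtered Floer homology viewed as a persistence module. To each $\phi \in \Ham(\Sigma \times M, \sigma \oplus \omega)$ one associates the persistence module $V(\phi)$ whose $s$-level is the filtered Floer homology $HF^{<s}(\phi)$; because $\Sigma$ has genus $\geq 4$ and $(M,\omega)$ is symplectically aspherical, the product is symplectically aspherical and these modules are pointwise finite-dimensional, so they admit well-defined barcodes. The basic stability property, inherited from Floer continuation, is that the interleaving distance between $V(\phi)$ and $V(\phi')$ is bounded by $d(\phi,\phi')$; thus the Hofer metric dominates the bottleneck distance on barcodes.

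If $\phi = \psi^k$, conjugation by $\psi$ fixes $\phi$ and induces an order-$k$ automorphism $T\colon V(\phi)\to V(\phi)$ of persistence modules. A Smith-type analysis of such $\Z/k$-equivariant pairs $(V,T)$ forces the barcode of $V$ to decompose into a small symmetric core (from the $T$-fixed subcomplex, which essentially recovers the Floer persistence module of $\psi$ itself) together with a union of $T$-orbits of bars whose orbit size divides $k$ and exceeds $1$. This yields a numerical obstruction $\mu_k(V)$ measuring the failure of $V$ to admit such an equivariant decomposition, with the property that the interleaving distance from $V$ to any $\Z/k$-equivariant persistence module is at least $\mu_k(V)/C$ for a constant $C$. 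Combining with the Hofer stability above gives $d(\phi, \psi^k) \geq \mu_k(V(\phi))/C$ for every $\psi \in \Ham$.

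It remains to produce $\phi_\lambda \in \Ham(\Sigma \times M, \sigma\oplus\omega)$, $\lambda \to \infty$, with $\mu_2(V(\phi_\lambda)) \to \infty$; then, using $\Aut \subset \{\psi^2 : \psi \in \Ham\}$, we obtain $d(\phi_\lambda, \Aut) \to \infty$. For this I would adapt an eggbeater construction on $\Sigma$: choose two autonomous Hamiltonians $F$ and $G$ on $\Sigma$, each a twist supported on a disjoint family of annuli, take $\phi_\lambda$ to be an appropriate product built from $\phi_F^\lambda$ and $\phi_G^\lambda$, and extend trivially to $\Sigma \times M$ by the identity on the $M$-factor. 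The genus $\geq 4$ hypothesis provides enough room to place the two annular systems so that their intersections produce horseshoe-type symbolic dynamics, generating an exponential family of geometrically distinct one-periodic orbits with actions that can be computed in closed form and lie in prescribed windows of length $O(\lambda)$. Arranging parameters so that the resulting barcode contains long bars whose multiplicities are incompatible with any $\Z/2$-equivariant structure forces $\mu_2(V(\phi_\lambda)) \geq c\lambda$, which completes the proof.

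The main obstacle is the Floer-theoretic bookkeeping for the eggbeater: verifying that the geometric orbits produced by the symbolic dynamics are nondegenerate and survive as generators of the Floer complex without cancellation by the differential, and that their actions lie in the predicted windows. Granted those technical inputs, the equivariant persistence module theory packages the geometric complexity of $\phi_\lambda$ into the desired Hofer lower bound.
\end{pfs}
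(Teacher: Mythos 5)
Your high-level strategy — detect non-autonomy via a $\Z/k$-equivariant structure on a Floer-theoretic persistence module, prove Lipschitz stability in Hofer's metric, and feed in an egg-beater to make the obstruction large — is the paper's, but your description of the equivariant invariant has a genuine gap. You define $\mu_k(V)$ as a quantity measuring ``the failure of $V$ to admit such an equivariant decomposition,'' with interleaving distance from $V$ to any $\Z/k$-equivariant module bounded below by $\mu_k(V)/C$. But over the ground field $\Q(\zeta_k)$ (which you need for any Smith-type analysis), \emph{every} persistence module $V$ carries an order-$k$ automorphism — scalar multiplication by $\zeta_k$ — so the set of $\Z/k$-equivariant persistence modules is all of them, and your $\mu_k$ is identically zero. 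The paper avoids this by not working with bare persistence modules at all. The object is the pair $(V_r(\phi),A)$, where $V_r(\phi)$ is the filtered Floer persistence module of $\phi^p$ (not of $\phi$!) in a primitive free homotopy class $\al$, and $A=[\rR_p]$ is the \emph{canonical} loop-rotation $\Z_p$-action, which exists for every $\phi\in\Ham$, not only for $p$-th powers. The obstruction is then to this specific $A$ being a \emph{full $p$-th power} (i.e.\ admitting $B$ with $B^p=A$), and is read off from divisibility-by-$p$ of multiplicities in the barcode of the $\zeta_p$-eigenspace of $A$ (Proposition \ref{prop-multipl}, Definition \ref{df-sens-spread}). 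The passage to $\phi^p$ is essential: there is no canonical finite-order action on $HF(\phi)$ for generic $\phi$, so an invariant built from $HF(\phi)$ alone cannot see the structure you want. Relatedly, your remark that the $T$-fixed subcomplex ``essentially recovers the Floer persistence module of $\psi$'' is not used in the paper and is not correct as stated; the paper's argument is purely about the $\zeta_p$-eigenspace, not about localization.

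Two further points. First, your egg-beater sketch has the annular systems ``disjoint,'' but they must \emph{intersect} — the horseshoe dynamics come from the two intersection squares — and it is precisely the choice of a non-contractible primitive class $\alpha_\la$ (requiring genus $\geq 4$ to provide the extra handles) that cuts the exponentially many periodic orbits down to a $\la$-independent number of non-degenerate ones with linearly separated actions (Proposition \ref{Proposition: example}); without fixing $\alpha$ the Floer bookkeeping is hopeless. Second, your route goes through $\Aut\subset\Pow_2$ and therefore through the full $\mu_p$ machinery of Theorem \ref{thm-main-squares}. The paper notes this is possible, but it also gives a \emph{simpler independent proof} of Theorem \ref{thm-main} using the spectral spread $w_{k,\al}$ (Definition \ref{Definition: the invariant}), which only needs $\rS_k=\rT_k-\id$ to be nonzero in a long window and bypasses eigenspaces, cyclotomic fields, and multiplicity-divisibility entirely. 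If you are proving Theorem \ref{thm-main} on its own, the $w_{k,\al}$ route is the more economical one.
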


\medskip
\noindent In this theorem $M$ is allowed to be the point.

\medskip
\noindent
Our main result is the following refinement of Theorem \ref{thm-main}. Let $k \geq 2$ be an integer.
       Write $\Pow_k =  \{\phi=\psi^k|\; \psi \in \Ham\}$ for the set of Hamiltonian diffeomorphisms admitting a
         root of order $k$ and denote
\[\pow_k := \sup_{\phi \in \Ham} d(\phi, \Pow_k)\;,\].

\medskip
\noindent
\begin{thm}\label{thm-main-squares}
Let $\Sigma$ be a closed oriented surface of genus $\geq 4$ equipped with an area form $\sigma,$ and $k \geq 2$ an integer.
Then for every closed symplectically aspherical manifold $(M,\omega)$
$$\pow_k(\Sigma \times M, \sigma \oplus \omega)= +\infty\;.$$
\end{thm}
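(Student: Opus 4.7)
The plan is to follow the scheme of Theorem \ref{thm-main}, but replacing the obstruction to being autonomous by a finer, persistence-theoretic obstruction to being a $k$-th power. The central object is the filtered Floer homology $\mathbb{V}(\phi) = \{HF^{(-\infty,s]}(\phi^k)\}_{s \in \R}$ of the $k$-th iterate, together with the natural order-$k$ automorphism $\Phi$ induced by conjugation by $\phi$ (equivalently, by the cyclic shift of loops on the $k$-fold cover of $S^1$). Since $(\Sigma \times M, \sigma \oplus \omega)$ is symplectically aspherical, $\mathbb{V}(\phi)$ is an honest $\R$-persistence module over a field, and $\Phi$ endows it with the structure of a filtered $\Z/k$-module.

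First I would develop a structure theory for $\R$-persistence modules equipped with a finite-order automorphism, refining the standard barcode into an equivariant barcode whose bars come in free $\Z/k$-orbits of size $k$ and fixed bars carrying a character of $\Z/k$. Next I would identify the algebraic condition imposed on $(\mathbb{V}(\phi),\Phi)$ by the equality $\phi = \psi^k$: namely that $\Phi$ lifts to an order-$k^2$ automorphism whose $k$-th power equals $\Phi$, coming from conjugation by $\psi$ on $HF(\psi^{k^2}) = HF(\phi^k)$. This lift forces a precise multiplicity pattern on the equivariant barcode. I would distill this into a numerical ``$k$-power defect'' invariant and verify via standard Floer-continuation estimates that it is Lipschitz in Hofer's metric, thereby bounding $d(\phi, \Pow_k)$ from below.

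Finally, I would construct Hamiltonians on $\Sigma \times M$ realizing arbitrarily large defects. Here the genus $\geq 4$ hypothesis enters via the eggbeater construction: on $\Sigma$ one builds a pair of autonomous Hamiltonians $F, G$ supported in annular regions whose cores form a graph rich enough that the iterated map $(f \circ g)^N$ has exponentially many hyperbolic periodic points with actions in a controlled window. Lifting trivially in the $M$-direction and invoking a K\"unneth-type identification of the Floer persistence module (valid by symplectic asphericity of $M$), one obtains diffeomorphisms $\phi_N$ whose filtered Floer barcodes contain many ``free'' bars under $\Phi$ that violate the $k$-power multiplicity pattern by a quantity growing linearly in $N$, while $d(\phi_N, \id) = O(N)$; rescaling then gives the claim.

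The main obstacle is the second step: pinning down which algebraic constraint on $(\mathbb{V}(\phi), \Phi)$ is genuinely equivalent, in the Hofer-robust sense, to admitting a $k$-th root, and constructing a numerical invariant that cleanly detects its failure. The naturality of the conjugation action under Floer continuation must be verified with care, and the eggbeater examples must be shown to violate the constraint robustly rather than by accident; the latter is where genus $\geq 4$, rather than any smaller genus, is expected to play its quantitative role.
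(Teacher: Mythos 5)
Your high-level strategy matches the paper's: use the $\Z/k$-action on the filtered Floer homology of $\phi^k$ induced by conjugation by $\phi$ (equivalently the loop-rotation operator $[\rR_k]$), observe that $\phi=\psi^k$ forces this to extend to an order-$k^2$ automorphism with $k$-th power $[\rR_k]$, distill a Hofer--Lipschitz numerical obstruction, and realize large values via a stabilized egg-beater. That much is right and correctly identified. But the step you flag as the ``main obstacle'' is where the proposal goes wrong, and the wrongness is not merely a matter of care.

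The proposed ``equivariant barcode'' decomposition --- bars coming in free $\Z/k$-orbits of size $k$ together with fixed bars carrying a character --- does not exist for persistence modules. That picture is the structure theory of a finite $\Z/k$-\emph{set}; what you have here is a persistence module carrying a $\Z/k$-\emph{representation}. Over a field $\K$ of characteristic prime to $k$ containing $k$-th roots of unity, such an object decomposes into $\zeta$-eigenspace persistence submodules $L_\zeta$, each with its own ordinary barcode, and that is all the structure available; bars within one eigenspace do not pair up into orbits, and ``fixed bars'' in your sense do not single themselves out. The paper works precisely with this eigenspace decomposition. Moreover, and this is the essential missing ingredient, the obstruction only becomes visible after a deliberate choice of ground field. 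The algebraic constraint that a full $k$-th power imposes on the $\zeta$-eigenspace $L_\zeta$ (for $\zeta$ a primitive $p$-th root of unity, $k=p$ prime) is that the restricted lift $B'$ satisfies $(B')^p=\zeta\cdot\id$, and one then wants to conclude that $\dim (L_\zeta)_t$ (and all relevant images of structure maps) is divisible by $p$. This follows from $(\det B')^p=\zeta^{\dim}$ \emph{only} over a field in which $x^p=\zeta^q$ ($p\nmid q$) has no solution --- e.g.\ the cyclotomic field $\Q(\zeta_p)$. Over $\C$ the equation $x^p=\zeta$ is solvable, $\dim L_\zeta=1$ is possible for a $p$-th power, and your obstruction evaporates. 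Without this number-theoretic input, no multiplicity pattern is forced, so the ``$k$-power defect'' invariant you would build from the naive equivariant barcode picture is identically zero and cannot bound $d(\phi,\Pow_k)$. Relatedly, you should first reduce to $k=p$ prime (since $\Pow_k\subset\Pow_p$ for $p\mid k$); for composite $k$ the representation-theoretic bookkeeping you sketch is more delicate and unnecessary. The paper's invariant $\mu_p$ is then the largest window-length on which the multiplicity of a bar in $\cB(L_\zeta)$ is stably not divisible by $p$, and the egg-beater produces exactly $2^{2p}$ $p$-tuples of fixed points in a fixed primitive class with pairwise action gaps of order $\lambda$, which makes $\mu_p(\phi_\lambda)\sim\lambda$; your description of ``exponentially many hyperbolic points'' is true for the map globally but is not what drives the estimate --- what matters is that the count in the chosen homotopy class is $\lambda$-independent while the actions spread.
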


Since for $p$ dividing $k,$ we have $\Pow_p \supset \Pow_k,$ it suffices to prove Theorem \ref{thm-main-squares} in the special case when $k$ is a prime.



\medskip
\noindent
A few remarks are in order. Since every autonomous diffeomorphism admits a root of any order, Theorem \ref{thm-main} is an immediate consequence of Theorem \ref{thm-main-squares}. Nevertheless, we state it separately since it provides a symplectic take on an important phenomenon in dynamical systems which will be discussed in the next section. Furthermore, it admits a (somewhat simpler) independent proof in the course of which we introduce a new invariant of Hamiltonian diffeomorphisms. The split form $\Sigma \times M$ of the symplectic manifold is crucial for producing specific examples of Hamiltonian diffeomorphisms which lie arbitrarily far from $\Aut$ and $\Pow_p$. More comments on the proof of both theorems can be found in Sections \ref{subsec-constraints} and \ref{subsec-fullsquares} of the Introduction.

\subsection{``Vector fields generate few diffeomorphisms"} The phenomenon described in the title of this subsection,
which is valid for various classes of dynamical systems, has been studied for more than four decades starting from Palis's seminal work \cite{Palis}. Nowadays it is known, for instance, that non-autonomous $C^1$-smooth symplectomorphisms (not necessarily Hamiltonian) form a $C^1$-open and dense set in the group of all $C^1$-symplectomorphisms, see Arnaud, Bonatti, and  Crovisier \cite{ABC} and Bonatti, Crovisier, Vago, and Wilkinson \cite[p. 929]{BCVW}.  It is an easy consequence of a work by Ginzburg and G\"{u}rel \cite{GG} that a similar statement holds true for $C^{\infty}$-smooth Hamiltonian diffeomorphisms of certain
symplectic manifolds, for instance of symplectically aspherical ones. The method developed in the present paper yields the following result.

\medskip
\noindent
\begin{thm}\label{thm-generic}
For a closed symplectically aspherical manifold, the set $\Ham \setminus \Aut$ contains
a $C^\infty$-dense subset which is open in the topology induced by Hofer's metric.
\end{thm}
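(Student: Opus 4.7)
The plan is to construct, for each prime $k \geq 2$, a non-negative function $\mu_k : \Ham(M,\omega) \to [0,+\infty]$ with three properties: (i) it is Hofer-Lipschitz; (ii) it vanishes on $\Pow_k$, and hence in particular on $\Aut \subset \bigcap_k \Pow_k$; and (iii) its positive locus is $C^\infty$-dense in $\Ham$. Once such $\mu_k$ is available, the set $U := \bigcup_{k \text{ prime}} \{\mu_k > 0\}$ is open in the Hofer topology (as a union of Hofer-open sets, each open by (i)), is $C^\infty$-dense (by (iii)), and is contained in $\Ham \setminus \Aut$ by (ii), which is exactly what Theorem \ref{thm-generic} asserts.

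The invariant $\mu_k$ is to be extracted from the filtered Floer complex $CF_\ast(\phi^k)$ together with the $\Z/k$-action induced by conjugation with $\phi$, viewed as an equivariant persistence module. For $\phi = \psi^k \in \Pow_k$, the iterate $\phi^k$ can be generated by a Hamiltonian which is $\Z/k$-periodic in time, and the resulting equivariant persistence module splits into invariant and free $\Z/k$-orbit summands of a very restricted shape: the action values on a free orbit are all equal, while the invariant summand is itself the persistence module of $\phi^k$ regarded through $\psi$. The invariant $\mu_k(\phi)$ is then defined as a bottleneck-type distance from the given $\Z/k$-equivariant barcode of $\phi^k$ to the subclass of barcodes of this restricted shape. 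Property (ii) is then tautological, and Property (i) follows from the Hofer-stability of filtered Floer homology (developed in the earlier sections of the paper in the form of interleaving bounds for persistence modules) combined with the elementary inequality $d_H(\phi^k,(\phi')^k) \leq k \cdot d_H(\phi,\phi')$ and the fact that the conjugation actions of $\phi$ and $\phi'$ are close in the appropriate equivariant sense whenever $\phi$ and $\phi'$ are Hofer-close.

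For the $C^\infty$-density (Property (iii)) we invoke the Ginzburg--G\"urel results \cite{GG} on the Conley conjecture for symplectically aspherical manifolds: in any $C^\infty$-neighborhood of an arbitrary $\phi \in \Ham$, there exists $\phi'$ with a non-degenerate simple periodic point of every sufficiently large prime period. Fix such a prime $k$; then the resulting simple $k$-periodic orbit of $\phi'$ contributes a free $\Z/k$-orbit of fixed points of $(\phi')^k$ whose action values, after a further generic $C^\infty$-small perturbation, can be made pairwise distinct. This forces the corresponding free $\Z/k$-orbit in the equivariant persistence module to have strictly positive action spread, which is incompatible with the restricted shape characterizing $\Pow_k$, and thus yields $\mu_k(\phi') > 0$.

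The main obstacle is Property (iii), and specifically the passage from having a non-degenerate simple periodic orbit to having a positive lower bound on $\mu_k$. The subtlety is that one must ensure the free $\Z/k$-orbit produced by the Ginzburg--G\"urel argument cannot be cancelled in the equivariant persistence module by boundary maps involving other orbits of the same $\Z/k$-type---this is where the non-degeneracy, the control of the action values, and an index-parity argument (exploiting the fact that for symplectically aspherical manifolds the Conley--Zehnder index is globally well-defined modulo $2N$) all enter. The rest of the proof, namely the openness in Hofer's metric and the descent from $\Pow_k$ to $\Aut$, is formal given the machinery of equivariant persistence modules set up in the body of the paper.
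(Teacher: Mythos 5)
Your overall scheme—construct a Hofer-Lipschitz, conjugation-aware Floer-theoretic invariant that vanishes on a distinguished class of diffeomorphisms and is generically positive—is exactly the template the paper follows. However, you take the harder route of building an invariant $\mu_k$ that vanishes on $\Pow_k$ (not merely on $\Aut$), and it is precisely in that extra ambition that a genuine gap appears.

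The gap is in Property (iii). After invoking Ginzburg--G\"urel to produce a non-degenerate primitive $p$-periodic orbit $x$, you claim that "the resulting simple $k$-periodic orbit $\ldots$ contributes a free $\Z/k$-orbit of fixed points of $(\phi')^k$ whose action values, after a further generic $C^\infty$-small perturbation, can be made pairwise distinct," and that this "strictly positive action spread" within the orbit is incompatible with the shape of the persistence module of a full $k$-th power. This is wrong on both counts. The $k$ fixed points $x,\phi'(x),\dots,(\phi')^{k-1}(x)$ of $(\phi')^k$ always have \emph{identical} action: the corresponding $1$-periodic orbits of $H^{(k)}$ are related by the loop-rotation $\rR_k$, which preserves $\mathcal{A}_{H^{(k)}}$ (Equation~\eqref{Equation: rotation preserves Hk action}); no perturbation can separate them without destroying the $\Z_k$-structure. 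Consequently, the "restricted shape" you describe for $\Pow_k$ (equal actions on free orbits) holds for \emph{every} $\phi$, so it cannot be the distinguishing feature of full $k$-th powers. The correct characterization (cf. Proposition~\ref{prop-multipl} and Theorem~\ref{thm-mu2-squares}) is that when $\phi=\psi^k$ the $\Z_k$-action $A=[\rR_k]$ admits a $k$-th root $B=[\rR_{k^2}]$, which forces dimensions of $\zeta$-eigenspaces to be divisible by $k$.

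Even with the right characterization in hand, a single primitive $p$-orbit does not automatically make the multiplicity-type invariant strictly positive: it contributes a nonzero bar to $L_\zeta$, but other primitive $p$-orbits of $\phi'$ (which the Conley-conjecture machinery in no way precludes) may contribute bars overlapping the same action windows, so that the relevant multiplicities of $L_\zeta$ remain divisible by $p$. Controlling that would require an argument you do not give, and the index-parity remark is too vague to plug the hole. The paper sidesteps this issue entirely: it uses the \emph{spectral spread} $w_{k,pt_M}$, which only detects $\rS_k=\rT_k-\id\neq 0$ on some window and is indifferent to multiplicities. For $w_{k,pt_M}$, a single free $\Z_k$-orbit (with its $k$ equal-action generators cyclically permuted by $\rT_k$, separated from the rest of the spectrum by the minimal action gap $\epsilon$) immediately gives $w_{p,pt_M}(\phi')\geq\epsilon>0$ (Proposition~\ref{Propsotion: Conley}), and $w_{k,pt_M}$ vanishes on $\Aut$ because the $S^1$-action makes $\rT_k$ homotopic to $\id$ through action-preserving transformations of the loop space (Proposition~\ref{Proposition: properties of the invariant}, item~\ref{itm:property 4}). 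Note finally that the statement you are aiming for—that $\Ham\setminus\Pow_k$ contains a $C^\infty$-dense Hofer-open set—is precisely Problem~\ref{prob-2}, which the paper poses as an open question; so you should not expect it to drop out easily.
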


\medskip
\noindent
In this context, Conjecture \ref{conj-main} states that the set $\Ham \setminus \Aut$
contains a Hofer ball of an arbitrary large radius.

\medskip
\noindent
\begin{problem}\label{prob-2}{\rm
It sounds likely that, by a slight modification of the tools developed in this paper, one can show that $\Ham \setminus \Pow_k,$ $k \geq 2,$ contains a $C^\infty$-dense Hofer open subset.}
\end{problem}

\subsection{Further motivation}
There is a couple of additional circumstances which triggered our interest in Hofer's geometry of the set $\Ham \setminus \Aut$. For certain closed symplectic manifolds $(M,\omega)$ one can show that a generic
smooth function $F \in C^\infty(M)$ generates a Hamiltonian flow $\{f_t\}, t \in \R$ with
$$ c|s-t| < d(f_t,f_s) \leq c^{-1}|s-t|$$ for some $c=c(F)>0$, that is a generic one-parameter subgroup of
$\Ham$ is a quasi-geodesic. This holds true, for instance, for symplectically
aspherical manifolds and for the sphere $S^2$ (see e.g. Section 6.3.1 in \cite{PR-book}).
For split manifolds of the form $\Sigma \times M,$ where $\Sigma$ is a surface of genus $\geq 4$ and $M$ is
symplectically aspherical,  Theorem \ref{thm-main} rules out existence of a constant $r >0$ such that the group $\Ham$ lies in the union of tubes of radius $r$ around these quasi-geodesics. In contrast to this, in the case $M=S^2$ (where Conjecture \ref{conj-main} is still open) we even cannot exclude existence of such a tube around {\it a specific} quasi-geodesic one-parameter subgroup constructed in \cite{P-diameter}.\footnote{This problem has been formulated by Misha Kapovich and L.P. at an Oberwolfach meeting
in 2006.}

\medskip

Autonomous Hamiltonian diffeomorphisms give rise to an interesting biinvariant metric on the group $\Ham(M,\omega)$
called the {\it autonomous metric}. It has been studied by Gambaudo and Ghys \cite{GaGh}, Brandenbursky and Kedra \cite{BK} and Brandenbursky and Shelukhin \cite{BS}. Observe that the set $\Aut \subset \Ham$ is conjugation invariant. Since the group $\Ham$ is simple \cite{Banyaga}, the normal subgroup generated by $\Aut$ coincides with $\Ham$. In other words, every Hamiltonian diffeomorphism $f$ can be decomposed into a product of $k$ autonomous ones. The autonomous distance $d_{aut}(\id, f)$ from the identity to $f$ is defined as the minimal number $k$
of terms in such a decomposition. The above-mentioned papers prove unboundedness of $\Ham$ with respect to
$d_{aut}$ on most surfaces. The comparison between the autonomous and the Hofer metrics is far from being
understood (see \cite{BK} for a discussion). For instance, let us assume that $M$ is a closed
oriented surface of genus $\geq 4$. Let $S(k) \subset Ham$ be the sphere of radius  $k \in \N$ with respect
to the autonomous metric. Theorem \ref{thm-main} shows that $d(f,S(1))$ can be made arbitrarily large,
and in fact, as we shall see in the course of the proof, such $f$'s can be chosen from $S(2)$.
However we cannot prove whether $d(f, S(2))$ can be made arbitrarily large, and at the moment this problem
sounds out of reach.

Let us mention finally that for most surfaces, an analogue of the quantity $\aut(\Sigma)$ for right-invariant "hydrodynamical" metrics on $\Ham(\Sigma)$ is infinite, see \cite{BS}.

\subsection{Constraints on autonomous diffeomorphisms}\label{subsec-constraints}
Even though Theorem \ref{thm-main} on autonomous diffeomorphisms is a formal consequence of Theorem \ref{thm-main-squares} on full $p$-th powers, we present an independent proof. One of the reasons is that
it uses less sophisticated tools. Here one can avoid the language of persistence modules
even though it provides a useful intuition. Additionally, Floer homology with $\Z_2$-coefficients
does the job and hence one can ignore orientation issues in Floer theory. Another reason for presenting an
independent argument is that in the cause of the proof we introduce a new invariant of Hamiltonian diffeomorphisms,
the so-called spectral spread, which is useful in its own right. In contrast to this, the proof of Theorem \ref{thm-main-squares} involves the theory of persistence modules in an essential way.

Let us outline our approach to Theorem \ref{thm-main}. Let us mention that its proof
simplifies significantly for the case of surfaces. It is instructive to discuss both the two- and the higher-dimensional cases.

We start with preliminaries on Hamiltonian flows and diffeomorphisms on a closed symplectic manifold $(M,\omega)$. Let $F_t(x)$, $t\in \R$ be a $1$-periodic Hamiltonian function generating a Hamiltonian flow $\{f_t\}$ with the time one map $\phi=f_1 \in \Ham(M,\omega)$. The $1$-periodicity of the Hamiltonian yields
\begin{equation}\label{eq-vsp-per-1}
f_{t+1}=f_tf_1 \;\;\forall t \in \R\;.
\end{equation}
The $1$-periodic orbits $x(t)=f_tx$ of the flow correspond to fixed points $x=x(0)$ of $\phi$.

A well known (and non-trivial) fact is that the free homotopy class $\alpha=\alpha(\phi,x)$ of the orbit $x(t)$ depends only on the fixed point $x$ of $\phi$, but not on the specific Hamiltonian flow $\{f_t\}$ generating $\phi$. The class $\alpha$ is called {\it primitive} if it cannot be represented by a multiply-covered loop, i.e., by a map $$S^1 \to S^1 \to M\;,$$ where the left arrow is a non-trivial cover.

Let $\alpha$ be a primitive free homotopy class on a closed orientable surface $\Sigma$ of genus $\geq 2$ equipped with an area form.   We call $\alpha$ {\it simple} if it can be represented by an embedded closed curve. Note that the non-constant periodic orbits of any autonomous flow on $\Sigma$ are either multiply covered or have no self-intersections by the uniqueness theorem for ODEs.

\medskip
\noindent
\begin{constraint} \label{constraint-1}   If a Hamiltonian diffeomorphism $\phi$ of a closed
symplectic surface $\Sigma$ possesses a fixed point $x$ such that the free homotopy class
$\alpha(\phi,x)$ is primitive but non-simple, then $\phi$ is not autonomous.
\end{constraint}

\medskip
\noindent
Let us mention that this constraint is purely two dimensional as all free homotopy classes of loops are simple in higher dimensions.

\medskip
\noindent Let us return to the case of general closed symplectic manifolds.
For an integer $k \geq 2$ the Hamiltonian
\begin{equation}\label{eq-Fk}
F^{(k)}= kF_{kt}
\end{equation}
generates the flow $\{f_{kt}\}$ with the time one map $\phi^k$. The $1$-periodic orbits of this flow
have the form $x(t)=f_{kt}x,$ where $\phi^kx=x$. Next comes the following observation:

\medskip
\noindent
{\bf ($\spadesuit$)} For every $i \in \N$,  the loop $x(t+i/k)$
is again a closed orbit of $\{f_{kt}\}$ corresponding to the fixed point $\phi^ix$ of $\phi^k$.
\medskip
\noindent

The fixed point $x$ of $\phi^k$  is called {\it primitive}\footnote{We remark that elsewhere in the literature (cf. \cite{Ginzburg,GG}) such fixed points are called {\it simple}, but this terminology would cause unnecessary confusion in the context of this paper.} if all $\phi^ix$ are pair-wise distinct for
$i=0,...,k-1$. For instance, if the free homotopy class $\alpha(\phi^k,x)$ is primitive,
the point $x$ is primitive as well (but, in general, not vice versa!).

The fixed point $x$ of $\phi^k$ is called {\it isolated} if
there are no other fixed points of $\phi^k$ in a sufficiently small neighborhood of $x$.
For instance, if $x$ is {\it non-degenerate}, i.e., the differential of $\phi^k$ at $x$ does not have $1$ as an eigenvalue, then $x$  is isolated.

If the Hamiltonian $F$ is time independent, primitive fixed points of $\phi^k$
with $k \geq 2$ are never isolated. They necessarily appear in $S^1$-families $\{f_tx\}$,
$t \in S^1= \R/(k\Z)$.

\medskip
\noindent
\begin{constraint} \label{constraint-2} If a $k$-th power ($k \geq 2$) $\phi^k$ of a
Hamiltonian diffeomorphism $\phi$ possesses an isolated primitive fixed point, $\phi$ is not
autonomous.
\end{constraint}

\medskip

Our next task is to refine Constraints \ref{constraint-1} and \ref{constraint-2} so that
they become robust with respect to (not necessarily small) $C^0$-perturbations of the Hamiltonian $F_t$ generating $\phi$. To this end we use the machine of filtered Floer homology.

For a free homotopy class $\alpha$ on $M$, denote by $\cL_\alpha M$ the space of loops $S^1 \to M$ representing $\alpha$. Under certain assumptions on $M$ and $\alpha$, which will be stated precisely later, every Hamiltonian $F$ as above determines {\it an action functional} $\A_{F}: \cL_\alpha M \to \R$,
$$z(t) \mapsto \int_0^1 F_t(z(t)) dt -\int_{\overline{z}} \omega\;.$$
Here $\overline{z}$ is (any) annulus connecting $z(t)$ with the loop playing the role of a base
point in  $\cL_\alpha M$, chosen once and forever. The critical points of $\A_{F}$ correspond to $1$-periodic orbits of $f_t$ in the class $\alpha$. {\it Filtered Floer homology} $HF^{(a,b)}(\phi)_\alpha$, $b >a$, is, roughly speaking, the Morse homology of the space $$\{\A_F < b\}/\{\A_F < a\}\;.$$ In particular,
$HF^{(a,b)}(\phi)_\alpha \neq 0$ yields the existence of closed orbits of $\{f_t\}$ in the class $\alpha$. An important feature of this existence mechanism for closed orbits is its robustness with respect to $C^0$-perturbations
of the Hamiltonian $F$. In particular, if the map $HF^{(a,b)}(\phi)_\alpha \to HF^{(a+c,b+c)}(\phi)_\alpha$ induced by inclusions of sublevel sets is non-zero, then every Hamiltonian flow $g_t$ generated by a Hamiltonian $G_t$ with
$$\max_{t \in [0,1],z\in M} |F_t(z) -G_t(z)| < c/2$$  possesses a $1$-periodic trajectory in the class $\alpha$. This readily yields that every Hamiltonian flow generating a diffeomorphism $\psi$ with $d(\phi,\psi) < c/2$
must have a closed orbit in the class $\alpha$.

In the context of Constraint \ref{constraint-1} above,  we produce a sequence of Hamiltonian diffeomorphisms
$\phi_i$, $i \to \infty$ of a surface $\Sigma$ and primitive non-simple free homotopy classes $\alpha_i$ on $\Sigma$ so that the filtered Floer homology of $\phi_i$ does not vanish in some window of width $c_i \to \infty$. In this way we conclude that $d(\phi_i,\Aut) \to \infty$, and hence $\aut(\Sigma)=+\infty$.

\medskip

In order to put Constraint \ref{constraint-2} into the framework of filtered Floer homology,
let us note that the main feature of the action functional  associated with a time-independent
Hamiltonian  is that it is invariant under the canonical circle action
\begin{equation}\label{eq-s1action}
z(t) \mapsto z(t+s),\;\; s \in S^1\;,
\end{equation}
on the loop space $\cL_\alpha M$.  Nowadays a number of tools for tackling this action are available,
such as equivariant Floer homology  and the Batalin-Vilkovisky operator\footnote{The original unsuccessful attempt of the authors was to use the BV operator.}
(see e.g. \cite{BO}). The approach of the present paper is based on a trick\footnote{It was communicated to us by Paul Seidel.} which can be described as follows. For any (not necessarily autonomous) Hamiltonian $F$ create an artificial $\Z_k$-symmetry and then confront it with the $S^1$-symmetry inherent to autonomous Hamiltonians. More precisely,
fix an integer $k \geq 2$ and take the Hamiltonian $F^{(k)}$ from Equation
\eqref{eq-Fk} generating $\phi^k$. Look at the $S^1$-action \eqref{eq-s1action} and define the {\it the loop-rotation operator}
$$\rR_k: \cL_\alpha M \to \cL_\alpha M,\; z(t) \to z(t+1/k)$$
generating a cyclic subgroup $\Z_k \subset S^1$. A straightforward calculation shows that the action functional $\A_{F^{(k)}}$ is invariant under $\rR_k$. In particular, $\rR_k$ induces a filtration-preserving morphism $[\rR_k]$ of the filtered Floer homology of $F^{(k)}$.

For an illustration, assume that $\phi^k$ has a primitive isolated fixed point $x$. The closed orbits of
the flow $\{f_{kt}\}$ corresponding to the fixed points $\phi^ix$, $i=0,...,k-1,$ have the same action, say $a$, and represent the same  free homotopy class, say, $\alpha$. Assume that some action window $(a-c,a+c)$ does not contain any other critical values of $\A_{F^{(k)}}$. Then each $\phi^ix$, $i=0,...,k-1,$ defines an
element $\gamma_i:= [\phi^ix]$ in $HF^{(a-c,a+c)}(\phi^k)_{\alpha}$.
Furthermore, these elements are pairwise distinct. Observation {\bf ($\spadesuit$)} above readily yields
that $[\rR_k]$ cyclically permutes $\gamma_i$'s, i.e.,
$[\rR_k](\gamma_i)=\gamma_{i+1}$ (we put here $\gamma_k=\gamma_0$).

On the other hand, if $F$ is autonomous, the action functional $\A_{F^{(k)}}$ is invariant under circle action
\eqref{eq-s1action}, and hence it is invariant under the homotopy
$$\cL_\alpha M \to \cL_\alpha M, \;z(t) \mapsto z(t+s/k),\; s \in [0,1]$$
joining $\rR_k$ with the identity. Thus, in this case $[\rR_k] = \id$.
In the next sections we shall extract a lower bound for the distance between $\phi$ and $\Aut$
from, roughly speaking, the widths of the windows where $[\rR_k] \neq \id$.
A precise realization of this strategy occupies Sections \ref{Section: loop rotation}-\ref{Subsection: spread} below.

\medskip

\subsection{Constraints on full $p$-th powers}\label{subsec-fullsquares}
In the spirit of Milnor's obstruction \cite{MilnorCycles} for a diffeomorphism to have a square root that was applied in the Hamiltonian setting by Albers-Frauenfelder \cite{AlbersFrSquares}, we have the following constraint:

\noindent

\begin{constraint} \label{constraint-2'} Assume that the $k$-th power (where $k=p$ is a prime) $\phi^k$ of a
Hamiltonian diffeomorphism $\phi$ possesses only isolated fixed points in a primitive class $\al$.
Look at all non-parameterized (i.e., considered up to a cyclic shift) $k$-periodic orbits
$$S= \{x,\phi (x),\ldots, \phi^{k-1} (x) \}$$ of $\phi$, where $x$ is such a fixed point. If $\phi$ is a $p$-th power, the number $ \# \{S\}$ of these orbits is divisible by $p$.
\end{constraint}

This statement has the following linear-algebraic proof that generalizes best to the setting of Floer homology.

\begin{proof}(Constraint \ref{constraint-2'})
Let $\K =\Q_p$ be the cyclotomic field obtained from $\Q$ by adjoining a primitive root of unity $\zeta_p$ of order $p.$ For each set $$S= \{x,\phi (x),\ldots, \phi^{k-1} (x) \}\;,$$ $x$ a fixed point of $\phi^k$ in class $\al,$ consider the vector space $V_S$ over $\K$ freely generated by $S.$ In other words $V_S$ is dual to the $\K$-vector space of $\K$-valued functions on $S.$ Let $V = \bigoplus_{S} V_S$ be the $\K$-vector space freely generated by all fixed points of $\phi^k$ in class $\al.$ The map $x \mapsto \phi(x)$ preserves each set $S,$ and induces a linear map $A:V\to V$ that satisfies $A^k=1,$ and moreover $A = \bigoplus_S A_S$ with respect to the decomposition $V = \bigoplus_{S} V_S,$ where $A_S: V_S \to V_S$ is a linear map satisfying $A_S^k=1$. It is easy to see that for each $S,$ $A_S$ is diagonalizeable, and the $\zeta_p$-eigenspace $(V_S)_{\zeta_p}$ of $A_S$ is one-dimensional. Therefore $\dim V_{\zeta_p} = \sum_S \dim (V_S)_{\zeta_p} = \# \{S\}.$ If $\phi$ admits a root $\psi$ of order $p,$ that is $\phi=\psi^p,$ then $\psi$ induces a linear map $B:V \to V$ such that $B^p = A.$ Note that $B$ commutes with $A$ and hence preserves $V_{\zeta_p}.$ Moreover the restriction $B'$ of $B$ to $V_{\zeta_p}$ satisfies $(B')^p = \zeta_p \cdot \id.$ Therefore by algebraic Lemmas \ref{Lemma: dimension divisible by m} and \ref{Lemma: equation has no solution} below the number $ \# \{S\} = \dim V_{\zeta_p}$ is divisible by $p.$ This finishes the proof.
\end{proof}

\begin{rmk}\label{Remark: k=pn}
Constraint \ref{constraint-2'} generalizes to the case when $k$ is any integer divisible by $p.$ Both the proof from \cite{MilnorCycles} and the above proof generalize to this case. As a matter of curiosity we present another equivalent short proof of this result. Consider the set $Y$ of all fixed points of $\phi^k$ in class $\al.$ The map $x \mapsto \phi(x)$ defines a {\em free} $\Z_k$-action on $Y.$ Note that $\{S\} \cong Y/\Z_k.$ Any $p$-th root $\psi$ of $\phi$ determines a $G=\Z_{pk}$-action whose restriction to $\Z_k \cong p\Z_{pk} \subset \Z_{pk}$ agrees with the above $\Z_k$-action. Take $x \in Y.$ Consider the stabilizer $H = \mathrm{Stab}_G (x) \subset G$ in $G$ of $x.$ Clearly $H \cap p\Z_{pk} = \{0\},$ as the $\Z_k$-action is free. Note that $p\cdot H \subset H \cap p\Z_{pk} = \{0\},$ whence $H \subset \ker(\Z_{pk} \displaystyle\xrightarrow{p\cdot} p \Z_{pk}) = k \Z_{pk}.$ Whenever $k$ is divisible by $p,$ we have $k\Z_{pk} \subset p \Z_{pk},$ yielding $H = 0.$ Hence the $G$-action is free. Thererefore, since $\Z_k \cong p\Z_{pk} \subsetneq G,$ the induced $G/p\Z_{pk} \cong \Z_p$-action on $Y/\Z_k$ is free, and hence
$ \# \{S\} = \# (Y/\Z_k)$ is divisible by $p$. This concludes the proof.
\end{rmk}

\subsection{Hamiltonian diffeomorphisms and persistence modules}
The Floer homological version of Constraint \ref{constraint-2'} is based on the theory of
one-parametric persistence modules (see \cite{Ghrist,CarlssonTopologyData} for a survey). Let us sketch it very briefly leaving details for Section \ref{Subsection: enriched persistence modules}.

A {\it barcode} $\cB =\{(I_j,m_j)\}_{1 \leq j \leq N}$ is a finite collection of intervals (or bars) $I_j=(a_j,b_j]$, $a_j \in \R$, $b_j \in \R \cup +\infty$
with multiplicities $m_j \in \N$. We say that two barcodes $\cB$ and $\cC$ are $\delta$-matched, $\delta > 0$, if after erasing some bars of lengths $< 2\delta$ in $\cB$ and $\cC$, the remaining ones can be matched bijectively so that the endpoints of the corresponding intervals lie at a distance $< \delta$ from one another.
The {\it bottleneck distance} $d_{bottle}(\cB,\cC)$ is defined as the infimum of such $\delta$. Note that if
$\cB$ and $\cC$ have a different number of infinite rays, the bottleneck distance between them is infinite.

A {\it persistence module} $V$ is a collection of finite-dimensional vector spaces $V_t$, $t \in \R$ over $\K$
 equipped with morphisms $\pi_{st}:V_s \to V_t$, $s < t$ which satisfy $\pi_{sr} = \pi_{tr}\circ\pi_{st}$ for all $s<t<r$. It is assumed in addition that that $V_t =0$ for $t \ll 0$ and that the morphisms $\pi_{st}$
satisfy certain regularity assumptions. According to the  {\it structure theorem}, for every persistence
module $V$ there exists unique barcode $\cB(V)= \{(I_j,m_j )\}$ so that $V \cong \oplus_{j=1}^N (Q(I_j))^{m_j}$.
Here the building block $Q(I) = (\{Q(I)_t\}, \theta)$ is given by $(Q(I))_t = \K$ for $t \in I$ and $(Q(I))_t = 0$ otherwise, while the morphisms $\theta$ are the identity maps within $I$ and zeroes otherwise.

Given a (sufficiently non-degenerate) Hamiltonian diffeomorphism $\phi$ of a closed aspherical
(or, if required, atoroidal) symplectic manifold $M$, one can associate to it a number of canonical persistence
modules $V(\phi)$ coming from Floer theory. Our guiding principle is that the resulting mapping
\begin{equation}\label{eq-guiding}
(\Ham(M), d) \to (\text{Barcodes}, d_{bottle}),\;\; \phi \mapsto \cB(V(\phi))
\end{equation}
is Lipschitz with respect to the Hofer and the bottleneck distances. Composing this map
with a real-valued Lipschitz function $\varsigma$ on the space of barcodes, one gets a numerical invariant
$\varsigma^\sharp(\phi):= \varsigma(\cB(V(\phi)))$ of Hamiltonian diffeomorphisms which is robust in Hofer's metric. Varying persistence modules $V(\phi)$ and functions $\varsigma$ yields a wealth of such invariants. \footnote{In order to prove that the map \eqref{eq-guiding} is Lipschitz, one uses a deep {\it isometry
theorem} between the interleaving distance on persistence modules and the bottleneck distance on barcodes, see \cite{BauerLesnick}.}

As a warm up, take $V(\phi):= HF^{(-\infty,a)}(\phi)_\alpha$, where $\alpha$ is the class of the point.
For a barcode $\cB$ let $c_1 \leq \dots \leq c_N$ be the endpoints of infinite rays sorted in the increasing order. One readily checks
that for $\varsigma(\cB):= c_k$, the corresponding value $\varsigma^\sharp(\phi)$ is {\it a spectral invariant}
of the Hamiltonian diffeomorphism $\phi$ (see Schwarz, \cite{SchwarzAspherical}).
Alternatively, taking $\varsigma$ to be the maximal length of a finite bar in $\cB$, we recover {\it the boundary depth} of $\phi$ as defined by Usher in \cite{UsherBoundaryHofer}. \footnote{Our understanding of this picture appeared in discussions with Michael Usher and Jun Zhang. For its extension to general symplectic manifolds we refer
the reader to a forthcoming paper by Usher and Zhang.}

In order to produce a Floer homological version of Constraint \ref{constraint-2'}, we  work (again) over the cyclotomic field $\K = \Q(\zeta_p)$, where $p$ is a prime, and look at the $\Z_p$-action of the loop rotation operator $A:=[\rR_p]$ on  $HF^{(-\infty,a)}(\phi^p)_\alpha$. Here $\alpha$ is a primitive class of free loops. The persistence module $L(\phi)$ of interest is the $\zeta_p$-eigenspace of this action. The Lipschitz function $\varsigma$ on the space of barcodes is defined, roughly speaking, as the length of the maximal interval in $\cB$ whose multiplicity is {\bf not} divisible by $p$ "in a stable way". Put $\varsigma^\sharp = \mu_p$. 

Now, observe that for the full $p$-th power $\psi^p$, the loop rotation operator $[\rR_{p^2}]$ associated to  $\psi$ induces a morphism $B$ of $HF^{(-\infty,a)}(\phi^p)_\alpha$ with $B^p=A$. Its restriction $B'$  to
$L(\psi^p)$ satisfies $(B')^p = \zeta_p \cdot \id$. Arguing as in the proof of Constraint  \ref{constraint-2'} above, we conclude that the dimension of $L(\psi^p)_t$ is divisible by $p$ for every $t$. Thus
$\varsigma (\cB(L(\psi^p)))=\mu_p(\psi^p)=0$. The vanishing of $\mu_p(\psi^p)$ is the desired Floer theoretical version of Constraint \ref{constraint-2'}.
 
Combining these facts together we get that for any $\phi \in \Ham(M,\omega)$
$$\mu_p(\phi)= |\mu_p(\phi) -\mu_p(\psi^p)| \leq \mathrm{const} \cdot d(\phi,\psi^p)\;.$$
This yields a lower bound on Hofer's distance from $\phi$ to $\Pow_p$ which we use for the proof
of Theorem \ref{thm-main-squares}.

The details and precise formulations are presented in Section \ref{Example: pmi from a square}.
For reader's convenience, we include a primer on persistence modules in Section \ref{Subsection: enriched persistence modules}.

\subsection{A Hamiltonian egg-beater map}
Our final task is to present specific examples of Hamiltonian diffeomorphisms $\phi_\lambda$, $\lambda >0$ for which
the distances $d(\phi_\lambda, \Aut)$ and $d(\phi_\lambda,\Pow_p)$ become arbitrarily large as $\lambda \to +\infty$. To this end, we use intuition coming from the transition to chaos in Hamiltonian
dynamics. Observe that in dimension $2$, autonomous Hamiltonian flows provide the simplest examples of integrable systems of classical mechanics. In particular, they exhibit  deterministic
dynamical behavior. This suggests that one should look for $\phi_\lambda$ in the ``opposite" class of chaotic Hamiltonian diffeomorphisms. With this in mind, we choose $\phi_{\lambda}$ to be a (slightly modified) {\it egg-beater map} (see \cite{FrOt}), a cousin of well studied linked twist maps \cite{SOW}. We start with a pair of intersecting annuli (see Figure \ref{fig:two-annuli} in Section \ref{Section: example} below) each of which carries a shear flow with the profile given by a tent-like function whose graph is sketched on Figure \ref{Figure: graph of u_0}. The map $\phi_\lambda$ is the composition of time-$\lambda$ maps of these flows. It is known
(at least at the numerical level) that this map exhibits chaotic behavior  as $\lambda \to +\infty$ \cite{MMSO} and
possesses rich symbolic dynamics.  Next, we embed the union of the annuli into the sphere $S^2$, insert handles into each connected component of the complement of the annuli and extend $\phi_\lambda$ by the identity to the obtained surface $\Sigma$ of genus $\geq 4$. Even though the egg-beater map has a wealth of periodic orbits, their number in a specially chosen free homotopy class of loops on $\Sigma$ (here the handles enter the play)
becomes independent of $\lambda$. This enables us to perform the Floer-homological analysis of the egg-beater map based on persistence modules and loop rotation
operators and eventually to end up with the desired lower bounds on $d(\phi_{\lambda}, \Aut)$. Incidentally, the same construction yields bounds for $d(\phi_\lambda,\Pow_p)$.
Moreover these bounds survive stabilization: they remain valid for $\phi_{\lambda} \times \id$ on $\Sigma \times M$.
In this way we finish off the proof of Theorems \ref{thm-main} and \ref{thm-main-squares}, see Section \ref{Section: example} for details. Note that various versions of Hamiltonian egg-beater maps appeared in the context of
algebra and geometry of Hamiltonian diffeomorphisms in the works by M.~Kapovich \cite{Kapovich}, Brandenbursky and Kedra \cite{BK-egg}, Kim and Koberda \cite{KimKoberda}, and Khanevsky \cite{Khanevsky}.

\medskip

We conclude the introduction by mentioning that some other aspects of symplectomorphisms admitting a square root have been recently studied
by means of ``hard" symplectic topology in \cite{Seidel-new,AF-new}.


%



\medskip
\noindent
{\sc Organization of the paper:} In Section \ref{Section:HF} we set the stage and present the
necessary background from Floer theory.

\
 In Section \ref{Section: loop rotation} we introduce loop rotations
operators coming from the natural circle action on the loop space of a symplectic manifold and relate it to the
action by conjugation of a diffeomorphism $\phi$ on its power $\phi^k$.

In Sections \ref{Subsection: spread} and \ref{Example: pmi from a square} we define new invariants of
Hamiltonian diffeomorphisms, the so called spectral spread and its ramifications.
Ultimately, our construction involves the theory of one-parametric persistence modules. A primer on this theory is presented in Section \ref{Subsection: enriched persistence modules}.
In Section \ref{subsec-persist-invol} we focus on persistence modules enhanced with a $\Z_p$-action and translate the geometric "distance to $p$-th powers" problem
appearing in Theorem \ref{thm-main-squares} into algebraic language.

In Section \ref{Section: example} we design a Hamiltonian ``horseshoe" map and use
the spectral spread and the results on persistence modules for proving Theorems \ref{thm-main} on autonomous diffeomorphisms and Theorem \ref{thm-main-squares} on full $p$-th powers, respectively.

In Section \ref{sec-2D} we present a simple Floer-homological argument proving Theorem \ref{thm-main}
for surfaces.

In Section \ref{Section:Conley} we prove Theorem \ref{thm-generic} stating that for symplectically aspherical manifolds  the subset of non-autonomous Hamiltonian diffeomorphisms contains a $C^\infty$-dense Hofer-open subset.

Finally, in Section \ref{Section: discussion} we outline a generalization of our results to monotone symplectic manifolds and discuss open problems.


\section{Floer homology in a non-contractible class of orbits}\label{Section:HF}

We start with a description of the basic set-up of this paper. Consider a symplectically aspherical manifold $(M,\om),$ such that the class $\alpha \in \pi_0(\cL M)$ is symplectically atoroidal. Namely, put $\cL_\alpha M = p_{\pi_0}^{-1}({\al})$ for the preimage of $\al$ under the natural projection $p_{\pi_0}: \cL M \to \pi_0(\cL M).$  We require that for a loop $\rho$ in $\cL_\al M,$ considered as a map $\rho: T^2 \to M$ from the two-torus, \[\int_{T^2} \rho^* \om = 0,\] and  \[\int_{T^2} \rho^* c_1 = 0,\] where $c_1=c_1(TM,\om)$ denotes the first Chern class of $(M,\om),$ and similar conditions hold for loops in the class $pt_M \in \pi_0(\cL M)$ of contractible loops.

In such manifolds, as far as Floer theory is concerned (see below), a capped periodic orbit $(z,\overline{z})$ of the Hamiltonian flow in class $\al$ can be identified with its starting point $x = z(0).$ Indeed, by the two vanishing conditions, one sees that neither its action nor its index depend on the choice of capping $\overline{z}.$

Denote $\Max(H):=\int_0^1 \max_M H(t,\cdot)\,dt,$ $\Min(H):=\int_0^1 \min_M H(t,\cdot)\,dt,$ and $\Osc(H) = \Max(H) - \Min(H).$ For a segment $I=(a,b)$ and $d \in \R$ denote $I+d:=(a+d,b+d).$

We present the following general definition that organizes certain properties of Floer homology in action windows. For convenience, we shall use the language of (two-parametric) persistence modules.
Let us emphasise that the genuine applications of persistence modules to our story (cf. the title of this paper) appear later on in Section \ref{Section: invariant}, where we
deal with a much more developed theory of one-parametric persistence modules.

Consider the partially ordered set $\mathcal{I}$ of open intervals $(a,b)$ where $a \in \{-\infty\} \cup \R,$ $b \in \R \cup \{+\infty\},$ $a<b,$ with the partial order $I_1=(a_1,b_1) \leq I_2=(a_2,b_2)$ if $a_1 \leq a_2$ and $b_1 \leq b_2.$ Turn this partially ordered set into a category in the natural way, wherein $\Hom_\cI(I_1,I_2)$ has $1$ element if $I_1 \leq I_2$ and is empty otherwise. For a subset $S \subset \R$ we denote by $\cI^S$ the full subcategory of $\cI$ whose objects are intervals $(a,b)$ with $a,b \notin S.$

Given a base field $\K,$ denote by $Vect_\K$ the category of finite-dimensional graded vector spaces over $\K.$

\bs
\begin{df}\label{Definition: r2p persistence module}
We call a {\em restricted two-parametric ($r2p$) persistence module} of graded vector spaces over $\K$ a pair $(S,V)$ consisting of a compact subset $S \subset \R$ with empty interior, {\em the spectrum} of the module, and a functor $V:\cI^S \to Vect_\K:$ a collection of vector spaces $V^{(a,b)}$ for each open interval $(a,b),\;a,b \notin S$ and linear {\em comparison maps} \[j_{(a_1,b_1),(a_2,b_2)}: V^{(a_1,b_1)} \to V^{(a_2,b_2)},\] for each two intervals $(a_1,b_1),(a_2,b_2)$ such that $a_1 \leq a_2,\; b_1 \leq b_2,$ that satisfy \[j_{I_2,I_3} \circ j_{I_1,I_2} = j_{I_1,I_3},\] for $I_1 \leq I_2 \leq I_3.$ Moreover, for each $a < b < c$ outside $S,$ the data includes a prescribed long exact sequence \[V^{(a,b)} \xrightarrow{j_{(a,b),(a,c)}} V^{(a,c)} \xrightarrow{j_{(a,c),(b,c)}} V^{(b,c)} \xrightarrow{\delta_{a,b,c}} V^{(a,b)}[1],\]
where $V^{(a,b)}[1]$ stands for the shift of the grading of $V^{(a,b)}$ by $1$.

We further require $V$ to satisfy the following property: 
\medskip
\begin{itemize}
\item  if $[a,b] \subset \R\setminus S,$ then $V^{(a,b)} = 0.$
%
\end{itemize}

These data and properties imply that

\begin{itemize}
\item  $j_{(a_1,b_1),(a_2,b_2)} = 0$ whenever $(a_1,b_1) \leq (a_2,b_2)$ are disjoint.
\end{itemize}

Restricted two-parametric persistence modules form a particular case of two-dimensional\footnote{The term ``two-dimensional" would cause confusion in our setting.} persistence modules as defined in \cite{CarlssonMultidimensionalPersistence}.
\end{df}

\bs
\begin{rmk}\label{Remark: category of $r2p$ persistence modules}

Note that $r2p$ persistence modules with a given spectrum $S$ form a category $\mathcal{D}^S$ where a morphism $\Phi$ between any two $r2p$ persistence modules $V,W \in \mathcal{D}^S$ is a collection of maps $\Phi^{(a,b)}: V^{(a,b)} \to W^{(a,b)}$ for each interval $(a,b),\;a,b \notin S$ that commutes with the comparison maps, i.e. $\Phi$ is a natural transformation of the corresponding functors.

\end{rmk}

\bs
\begin{rmk}
For a number $\sigma \in \R,$ and a $r2p$ persistence module $(S,V)$ we can form its {\em shift} $(S+\sigma,V^{\bullet + \sigma})$ by $\sigma,$ which is a $r2p$ persistence module with spectrum $S+\sigma$ and defined on objects as $(a,b) \mapsto V^{(a,b)+\sigma},$ and on morphisms as $j^{V^{\bullet + \sigma}}_{(a_1,b_1),(a_2,b_2)}= j^V_{(a_1,b_1)+\sigma,(a_2,b_2)+\sigma}.$ 
\end{rmk}

\bs
\begin{example}
We give a sketch of two examples from Morse homology, and continue to discuss a similar situation for Floer homology.

\begin{enumerate}
\item

A basic example of such an $r2p$ persistence module is associated to a Morse function $f$ on a closed manifold $X.$ Indeed put $V^{(a,b)} = H(\{f<b\},\{f \leq a \}),$ a relative homology group of sublevel sets of $f.$ In fact this group is isomorphic to $HM^{(a,b)}(f),$ the homology of the Morse complex of $f,$ generated by critical points of $f$ with critical values in $(a,b),$ and $S=Spec(f) = f(Crit(f))$ the set of all critical values of $f.$ The comparison maps are given by natural morphisms between the corresponding complexes induced by inclusions, or alternatively by certain Morse continuation maps.


\item A more subtle example can be constructed from {\em any} smooth function $f$ on a closed manifold. Put $S=Spec(f)=f(Crit(f)),$ and let $(a,b), \;a,b \notin S$ be an interval. Consider the set $\cF(f)$ of Morse functions $f'$ sufficiently close to $f$ so that $a,b \notin Spec(f'),$ and all continuation maps $C(f',f''): HM^{(a,b)}(f') \to HM^{(a,b)}(f'')$ for $f',f'' \in \cF(f)$ are isomorphisms. Then define $HM^{(a,b)}(f)$ as \footnote{This definition is a specific representative of the isomorphism class of {\em limits} of the {\em indiscrete groupoid}, namely a category with exactly one morphism between any two objects, formed by $\{HM^{(a,b)}(f')\}_{f' \in \cF(f)}$ and the continuation maps, rendering each two of these vector spaces canonically isomorphic. Note that this representative of the limit of this diagram is canonically isomorphic by a unique isomorphism to a similar representative of the limit of any of its full subdiagrams, namely subdiagrams with the same morphism sets as in the diagram between any two of their objects (since all the continuation maps are isomorphisms). This observation is useful in showing that this definition satisfies the properties of a $r2p$ persistence module.} the vector space of collections $\{x_{f'} \in HM^{(a,b)}(f')\}_{f' \in \cF(f)}$ satisfying $C(f',f'')(x_{f'}) = x_{f''}.$ One shows that $(S,\{HM^{(a,b)}(f)\}_{(a,b) \in \cI^S})$ forms a $r2p$ persistence module.


For example the constant function $f \equiv 0$ on a closed manifold $X$ gives the $r2p$ persistence module with spectrum $S=\{0\}$ and for $a,b \notin S,$ $HM^{(a,b)}(f) = 0$ if $0 \notin (a,b)$ and $HM^{(a,b)}(f) \cong H(X),$ the homology of $X,$ if $0 \in (a,b),$ with the obvious comparison maps.
\end{enumerate}
\end{example}

Let $(M,\om)$ be a closed connected symplectic manifold, and let $\al \in \pi_0(\cL M)$ be a free homotopy class of loops. Choose a reference path $\eta_\al \in \cL_{\al} M.$ Assuming that $(M,\om)$ is symplectically $\al$-atoroidal,  and symplectically aspherical, we describe a construction of a Hamiltonian Floer homology for $(M,\om)$ in the class $\al.$

For a Hamiltonian $H \in \cH:= \sm{S^1 \times M, \R}_0,$ (where $0$ stands for mean-normalized) the time-dependent Hamiltonian vector field $X_{H}$ is defined as \[\iota_{X_H(t)}\om = - d H_t,\] where $H_t(-)=H(t,-).$ Denote by $\mathcal{P}^\al(H)$ the set of $1$-periodic orbits $x(t)$ in the class $\al$ of the Hamiltonian flow of $X_H,$ namely solutions to \begin{eqnarray}
\dot{x}(t) = X_H(t,x(t))\; \forall t \in S^1, \\
\,[ x(t) ] = \al \in \pi_0(\cL M).
\end{eqnarray}

Fix a base field $\K$ (we pick $\K = \Z/(2)$ for the proof of Theorems \ref{thm-main} and \ref{thm-generic}, and $\K = \Q_p,$ the splitting field of $x^p-1 \in \Q[x]$ over $\Q,$ for the proof of Theorem \ref{thm-main-squares}). Consider the vector space $CF(H)_\al$ over $\K$ freely generated by the set $\mathcal{P}^\al(H).$ Put $\{\phi_H^t\}_{t \in [0,1]}$ for the flow generated by $H.$ The elements of $\mathcal{P}^\al(H)$ correspond to fixed points $x$ of $\phi_H:=\phi_H^1,$ such that the loop $\{\phi_H^t \cdot x\}_{t \in S^1 = [0,1]/\{0,1\}}$ lies in the class $\al.$

For an $\al$-non-degenerate Hamiltonian $H,$ namely one for which the linearization $D\phi_H(x)$ of $\phi_H$ at every fixed point $x$ corresponding to an element in $\mathcal{P}^\al(H)$ has no eigenvectors with eigenvalue $1,$ the set $\mathcal{P}^\al(H)$ is finite.

We define the action functional $\A_H:\cL_\al M \to \R$ by choosing a path in $\cL_\al$ between $\eta_\al$ and a point $x \in \cL_\al M,$ considering it as a cylinder $\overline{x}:S^1 \times [0,1] \to M,$ and computing \[\A_H(x,\overline{x}) = \int_0^1 H(t,x(t)) dt - \int_{\overline{x}}\om.\]

Since our manifold is $\al$-atoroidal, the action functional $\A_H$ does not depend on the choice of "capping" $\overline{x},$ and is hence well defined as a functional $\A_H: \cL_\al M \to \R.$ Its critical points are exactly the periodic orbits $\cP^\al(H)$ of the Hamiltonian flow of $H$ in class $\al.$ Put $Spec_\al(H):=\A_H(\cP^\al(H))$ for the spectrum of $\A_H$ in the class $\al.$ By \cite{OhChainLevel} $Spec_\al(H) \subset \R$ is a measure-zero subset, and hence has empty interior.

Since $H$ is $\al$-non-degenerate, $\A_H$ has isolated critical points in $\cL_\al$ - indeed there is a bijective correspondence between a $1$-periodic orbit of a given flow and its initial point. As the manifold $M$ is compact, we conclude that $CF(H)_\al$ is a finite-dimensional $\K$-vector space. We grade it as follows by the Conley-Zehnder index \cite{RobbinSalamon}, with the normalization that for a $C^2$-small autonomous Morse Hamiltonian, the Conley-Zehnder index of each of its critical points as a contractible periodic orbit of the Hamiltonian flow equals to the Morse index of this critical point. We choose non-canonically a trivialization $\Phi_\al$ of the symplectic vector bundle $\eta_\al ^* TM$ over $S^1.$ Then any choice of a homotopy $\overline{x}$ from $\eta_\al$ to a $1$-periodic orbit $x$ of the flow $\{\phi_H^t\}$ in the class $\al,$ defines a homotopically canonical trivialization of $x^* TM.$ We then compute the index of the path of symplectic matrices obtained from $\{D\phi_t^H(x(0))\}_{t \in S^1}$ by the trivialization. Since our manifold is $\al$-atoroidal, this number does not depend on the choice of homotopy $\overline{x}.$

Choosing a generic $\om$-compatible almost complex structure \[J \in \cJ:=\sm{S^1,\cJ(M,\om)}\] depending on $t \in S^1,$ so that transversality is achieved (cf. \cite{FloerHoferSalamon}), we define the matrix coefficients of the Floer differential $\del_{J,H}: CF(H)_\al \to CF(H)_\al[-1]$ by counting the number of (the dimension zero component of isolated) solutions $u$ of the Floer equation \[\del_s u + J(t,u)(\del_t u - X_H(t,u))=0.\]

By standard arguments (see e.g. \cite{Floer, FukayaOno, HoferSalamon, LiuTian}) $\del_{J,H} \circ \del_{J,H} = 0,$ and hence $(CF(H)_\al, \del_{J,H})$ is a chain complex. Moreover, $\A_H$ defines a function on the generators of $CF(H)_\al,$ which extends to $CF(H)_\al$ as a valuation, that is $\A_H(0):=-\infty,$ and for a chain $0 \neq c = \Sum_j a_j x_j,$ where $x_j \in \cP^\al(H),$
\[\A_H(c) := \max \{\A_H(x_j)| a_j \neq 0\}.\] In particular $\A_H(c_1 + c_2) \leq \A_H(c_1) + \A_H(c_2),$ for all $c_1,c_2 \in CF(H)_\al.$ By a standard action-energy estimate $\A_H(\del(c)) < \A_H(c).$ Hence $CF^{(-\infty,a)}(H)_\al := \{c \in CF(H)_\al|\;\A_H(c)<a\}$ is a subcomplex of $(CF(H)_\al,\del_{J,H}),$ and as $a$ runs through $\R \setminus Spec_\al(H),$ defines a filtration on $CF^{(-\infty,a)}(H)_\al.$ We obtain a filtered complex which we denote by $(CF(H)_\al,\del_{J,H},\A_H).$

For $a \notin Spec_\al(H),$ put $CF^{(-\infty,a)}(H)_\al := \{c \in CF(H)_\al|\;\A_H(c) < a\}.$ This is a subcomplex of $(CF(H)_\al,\del_{J,H}).$ For a window $(a,b)$ with $a,b \notin Spec_\al(H),$ define $CF^{(a,b)}(H)_\al$ as the quotient complex \[ CF^{(a,b)}(H)_\al := CF^{(-\infty,b)}(H)_\al/ CF^{(-\infty,a)}(H)_\al,\] with the induced differential, which we denote $\del_{J,H},$ by a slight abuse of notation. Put \[HF^{(a,b)}(H,J)_\al:= H(CF^{(a,b)}(H)_\al, \del_{J,H}),\] for the homology of this quotient complex. We have the following invariance statement.

\medskip
\begin{prop}\label{Proposition: $r2p$ persistence module of a Hamiltonian from Floer theory}
The assignment $(a,b) \mapsto HF^{(a,b)}(H,J)_\al$ defines a $r2p$ persistence module with spectrum $S=Spec_\al(H).$ Moreover, there is a canonical isomorphism between this $r2p$ persistence module and each other one obtained from a different choice of $J$ and $H,$ as long as the path $\{\phi^t_H\}_{t \in [0,1]}$ remains in a fixed class in the universal cover  $\til{\G}$ of the group of Hamiltonian diffeomorphisms.
\end{prop}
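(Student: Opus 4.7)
The plan is to build the $r2p$ persistence module structure at the chain level first and then verify naturality under the two independent choices of almost complex structure $J$ and of Hamiltonian $H$ within a fixed class of $\til{\G}$. For the chain-level construction, I would define the comparison map $j_{I_1,I_2}$ for $I_1=(a_1,b_1)\leq I_2=(a_2,b_2)$ as the homology map induced by the natural chain map of filtered quotients $CF^{I_1}(H)_\al\to CF^{I_2}(H)_\al$, coming from the inclusions $CF^{(-\infty,a_1)}\subseteq CF^{(-\infty,a_2)}$ and $CF^{(-\infty,b_1)}\subseteq CF^{(-\infty,b_2)}$ of subcomplexes. The functoriality relation $j_{I_2,I_3}\circ j_{I_1,I_2}=j_{I_1,I_3}$ already holds strictly at the chain level. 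For $a<b<c$ outside $Spec_\al(H)$, the short exact sequence of complexes
\[0\to CF^{(a,b)}(H)_\al\to CF^{(a,c)}(H)_\al\to CF^{(b,c)}(H)_\al\to 0\]
yields the prescribed long exact sequence with its connecting homomorphism $\delta_{a,b,c}$. The vanishing $HF^{(a,b)}(H,J)_\al=0$ when $[a,b]\cap Spec_\al(H)=\emptyset$ is immediate, since in that case no generator of $CF(H)_\al$ has action in the window, so the quotient complex itself vanishes.

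For invariance under a change of almost complex structure with $H$ held fixed, I would pick a generic path $\{J_s\}_{s\in[0,1]}$ between two transverse choices $J_0,J_1$ and form the standard continuation map $c_{J_0,J_1}$ by counting solutions of the associated $s$-dependent Floer equation. Because $H$ is independent of $s$, the usual energy-action inequality carries no shift term, so $c_{J_0,J_1}$ preserves the action filtration strictly, descends to chain maps on every window, and commutes with the comparison maps. Homotopies of homotopies of almost complex structures provide filtration-preserving chain homotopies between any two such continuations, and a standard gluing argument identifies $c_{J_1,J_0}\circ c_{J_0,J_1}$ with the identity up to filtered chain homotopy. This produces a canonical isomorphism of $r2p$ persistence modules independent of all auxiliary choices of $J$.

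The main obstacle is invariance under a change of Hamiltonian within a fixed class in $\til{\G}$. Given two $\al$-non-degenerate $H_0,H_1$ generating homotopic paths (rel endpoints) in $\Ham(M,\om)$, the continuation map $c_{H_0,H_1}$ associated to a homotopy $\{H_s\}$ shifts the action by amounts involving $\int_0^1\max_M(\del_s H_s)\,dt$ and so is not filtration-preserving in general. To eliminate the shift, one must exploit the $\al$-atoroidal and symplectically aspherical hypotheses: because the action of a $1$-periodic orbit in class $\al$ descends to a well-defined function of its class in $\til{\G}$ (cappings are irrelevant), a continuation-type argument sets up a canonical action-preserving bijection between $\cP^\al(H_0)$ and $\cP^\al(H_1)$, so that $Spec_\al(H_0)=Spec_\al(H_1)$ as subsets of $\R$. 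A sandwich argument (pairing $c_{H_0,H_1}$ with a reverse continuation and its chain homotopy to the identity) then upgrades these strict action identifications to a chain-level isomorphism between $(CF(H_0)_\al,\del_{J_0,H_0},\A_{H_0})$ and $(CF(H_1)_\al,\del_{J_1,H_1},\A_{H_1})$ that preserves the action filtration exactly and is canonical up to filtered chain homotopy; passing to homology of quotients gives the desired canonical isomorphism of $r2p$ persistence modules.
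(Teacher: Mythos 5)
Your chain-level construction of the comparison maps $j_{I_1,I_2}$ via inclusions of subcomplexes and the long exact sequence via the standard short exact sequence of quotient complexes is correct, as is the vanishing of $HF^{(a,b)}$ when $[a,b]\cap Spec_\al(H)=\emptyset$. The $J$-invariance step (continuation maps for a path of almost complex structures with $H$ fixed, chain homotopies from homotopies of homotopies, gluing for invertibility) is also correct and matches the paper.

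The gap is in the $H$-invariance argument. A continuation map $c_{H_0,H_1}$ built from an interpolation $\{H_s\}$ shifts the filtration by $\int_0^1 \max_M(\partial_s H_s)\,dt$, and this quantity does \emph{not} vanish merely because $\{\phi^t_{H_0}\}$ and $\{\phi^t_{H_1}\}$ are homotopic rel endpoints in $\Ham(M)$. Sandwiching $c_{H_0,H_1}$ against a reverse continuation and its chain homotopy to the identity proves only that the two filtered complexes are interleaved (hence quasi-isomorphic up to shift); the shifts accumulate rather than cancel, and there is no way to conclude from continuation data alone that some filtration-preserving chain isomorphism exists. Similarly, the observation that the two Hamiltonians have equal spectra and that the critical points are in action-preserving bijection does not by itself identify the differentials, since the continuation map on chains is not the orbit bijection (it has off-diagonal contributions), and the orbit bijection need not be a chain map for the Floer differentials.

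What is needed, and what the paper uses, is a geometric rather than PDE identification. The contractible loop $\gamma_t := \phi^t_{H_1}\,(\phi^t_{H_0})^{-1}$ defines the loop-space diffeomorphism $\rP(\gamma)\colon \cL_\al M \to \cL_\al M$, $z(t)\mapsto \gamma_t(z(t))$, which satisfies $\rP(\gamma)^*\A_{H_1}=\A_{H_0}$ exactly (the potential shift $A(\gamma,\eta_\al)$ vanishes by atoroidality and asphericity). Combined with pushing forward the almost complex data, $\rP(\gamma)$ gives a strict filtration-preserving isomorphism of Floer complexes, and the canonical isomorphism of $r2p$ persistence modules follows directly, without any need to control shifts in continuation maps.
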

This leads us to the following definition.
\medskip

\begin{df}\label{Definition: $r2p$ persistence module of a Hamiltonian diffeomorphism}
Given a non-degenerate element $\til{\phi}$ in $\til{\G},$ we define its $r2p$ persistence module \[(a,b) \mapsto HF^{(a,b)}(\til{\phi})\] as\footnote{This definition is a canonical representative of the isomorphism class in $\cD^S$ of limits in $\cD^S$ of the $\cD^S$-valued diagram defined by Proposition \ref{Proposition: $r2p$ persistence module of a Hamiltonian from Floer theory}.} the $r2p$ persistence module with spectrum $S=Spec_\al(H)$ which to an open interval $(a,b)$ associates the vector space of collections \[\{x_{(a,b),H,J} \in HF^{(a,b)}(H,J)_\al\}_{(H,J)}\] indexed by all pairs of $\al$-regular pairs $(H,J)$ such that the path $\{\phi^t_H\}_{t \in [0,1]}$ lies in class $\til{\phi},$ satisfying the condition \[\Psi^{(a,b)}_{(H,J),(H',J')}(x_{(a,b),H,J}) = x_{(a,b),H',J'},\] for the canonical isomorphism \[\Psi^{(a,b)}_{(H,J),(H',J')}: HF^{(a,b)}(H,J)_\al \to HF^{(a,b)}(H',J')_\al.\]
\end{df}

Proposition \ref{Proposition: $r2p$ persistence module of a Hamiltonian from Floer theory} follows for example from \cite[Proposition 5.2]{UsherBoundaryHofer}. Roughly speaking, and this can be made rigorous, the isomorphism between the $r2p$ persistence modules for different choices of $J$ is constructed by counting solutions to a Floer continuation map, and the isomorphism between the $r2p$ persistence modules for two different choices $\{\phi^t_H\}_{t \in [0,1]}, \{\phi^t_{H'}\}_{t \in [0,1]}$ of the Hamiltonian path in a fixed class in the universal cover is obtained by a diffeomorphism of $\cL_\al$ given by the action of the contractible loop $\{\gamma_t=\phi^t_{H'}(\phi^t_{H})^{-1}\}_{t \in [0,1]}$ in $\G,$ that is the difference between these paths.\footnote{Here and below we deal with certain transformations of loop spaces which naturally act on action functionals and on the Riemannian metrics on $\cL_\al M$  coming from loops of almost complex structures on $M$, thus inducing morphisms in Floer homology which are useful for our purposes. We call them diffeomorphisms since this
way of thinking provides a right intuition for guessing and manipulating these Floer homological constructions.
Incidentally, these transformations are genuine diffeomorphisms if understood in the sense of diffeology \cite{Iglesias}.} This is the diffeomorphism \[\rP(\gamma): \cL_\al \to \cL_\al,\]  \[z(t) \mapsto \gamma_t(z(t)).\]

Moreover (cf. \cite[Section 13.1]{P-book}) we have the diagram

\[(\cL_\al M, \A_H) \xrightarrow{\rP(\ga)} (\cL_\al M, \A_{H'})\]

of spaces with functionals. That is \begin{equation}\label{Equation: loop preserves action} \A_H = \rP(\gamma)^*\A_{H'}.\end{equation}

\bs

In fact it is useful to have a definition of Floer homology for degenerate elements $\til{\phi} \in \til{\G}$ as well, and many of the arguments that follow are based, at least intuitively, on the following extended definition.

\bs
\begin{df}\label{Definition: Floer homology for degenerate paths}
Given any $\til{\phi} \in \til{\G},$ represent it by a path $\{\phi^t_H\}$ with Hamiltonian $H \in \sm{S^1 \times M, \R}.$ Let $(a,b), \; a<b, \; a,b \notin Spec_\al(H)$ be a fixed window. Then for any non-degenerate $C^2$-perturbation $H'$ of $H$ that is sufficiently small\footnote{We say that a perturbation $H'$ of $H$ is $C^2$-small if $H'-H$ is a $C^2$-small function}, $a, b \notin Spec_\al(H')$ still. Moreover, decreasing if necessary the threshold of smallness, interpolation continuation establishes functorial isomorphisms between each pair of Floer homology groups\footnote{In other words, these vector spaces and isomorphism maps form an {\em indiscrete groupoid} in $Vect_\K$} in $\{HF^{(a,b)}(H',J')_\al\}$ where $H'$ runs over the set $\cH^{reg}(H)$ of such $C^2$-small perturbations, and $J' \in \cJ^{reg}(H')$ is an almost complex structure such that the pair $(H',J')$ is regular. We define $HF^{(a,b)}(H)_\al$ as the vector space of collections\footnote{That is - we are considering a specific representative of the limit of the corresponding indiscrete groupoid.} \[\{x_{{(a,b)},{H',J'}} \in HF^{(a,b)}(H',J')_\al\}_{\cH^{reg}(H)}\] such that  \[\Psi^{(a,b)}_{(H'_1,J'_1),(H'_2,J'_2)}(x_{{(a,b)},{H'_1,J'_1}}) = x_{{(a,b)},{H'_2,J'_2}}\] for the canonical isomorphism \[\Psi^{(a,b)}_{(H'_1,J'_1),(H'_2,J'_2)}: HF^{(a,b)}(H'_1,J'_1)_\al \to HF^{(a,b)}(H'_2,J'_2)_\al.\] We note that the same construction with the indexing set any non-empty subset of the above indexing set yields a canonically isomorphic vector space, and that this implies  that $HF^{(a,b)}(H)_\al$ defines a $r2p$ persistence module and that as in Proposition \ref{Proposition: $r2p$ persistence module of a Hamiltonian from Floer theory} this $r2p$ persistence module does not depend on the representative $\{\phi^t_H\}$ of $\til{\phi}.$ This leads as in Definition \ref{Definition: $r2p$ persistence module of a Hamiltonian diffeomorphism} above to a definition of $HF^{(a,b)}(\til{\phi})_\al$ for any class $\til{\phi} \in \til{\G}.$
\end{df}

\begin{rmk}
It is straightforward to check that Definition \ref{Definition: Floer homology for degenerate paths} agrees with definition \ref{Definition: $r2p$ persistence module of a Hamiltonian diffeomorphism}. Namely, that the resulting $r2p$ persistence modules are canonically isomorphic.
\end{rmk}

\begin{rmk}\label{Remark: FH descends to Ham}
In fact, Proposition \ref{Proposition: $r2p$ persistence module of a Hamiltonian from Floer theory}, and hence Definition \ref{Definition: $r2p$ persistence module of a Hamiltonian diffeomorphism}, and consequently Definition \ref{Definition: Floer homology for degenerate paths} can be upgraded in our setting of a closed symplectically aspherical, $\al$-atoroidal symplectic manifold to define a $r2p$ persistence module \[(a,b) \mapsto HF^{(a,b)}(\phi)\] of a Hamiltonian diffeomorphism itself. Indeed, given two paths $\{ \phi_H^t \}$ and $\{\phi_{H'}^t\}$ with endpoint $\phi$ we consider the action $\rP(\gamma)$ of the difference loop $\{\gamma_t=\phi^t_{H'}(\phi^t_{H})^{-1}\}_{t \in [0,1]}$ on the loop space $\cL_\al M.$

It is easy to see that for $x \in \cP^\al(H),$ its image $x'(t)=\rP(\gamma)(x)(t) = \gamma_t(x(t)) \in \cP^\al(H')$ satisfies \[\A_{H'}(x') = \A_H(x) + A(\gamma,\eta_\al),\] where $A(\gamma,\eta_\al) = \A_G(\rP(\gamma)(\eta_\al)),$ where the action is normalized by the reference loop $\eta_\al,$ and $G \in \cH$  is the Hamiltonian generating $\gamma.$ Namely, for $\eta \in \cL_\al M$ we put \[A(\gamma,\eta) = \int_0^1 G(t,\gamma_t(\eta(t))) dt - \int_{\overline{\rP(\gamma)}(\eta)} \om,\] where $\overline{\rP(\gamma)}(\eta)$ is a cylinder in $M$ defined by any path in $\cL_\al M$ between $\eta_\al$ and $\rP(\gamma)(\eta).$ A little differential homotopy computation shows that $A(\gamma,\eta)$ depends only on the class $[\gamma]$ of $\gamma$ in $\pi_1(\Ham(M))$ and the free homotopy class $\al$ of $\eta.$ In particular $A(\gamma,\eta_\al)$ is invariant under reparametrizations of $\gamma$ and $\eta_\al.$ Reparametrizing $\gamma$ and $\eta_\al$ to be non-constant in disjoint time subintervals, we see that $A(\gamma, \eta_\al) = A(\gamma,pt_M)$ and $A(\gamma,pt_M) = 0$ by \cite[Theorem 1.1, Corollary 4.15]{SchwarzAspherical}.

Similarly, it is easy to see that the Conley-Zehnder index of $x \in \cP^\al(H)$ and of its image $x'(t)=\rP(\gamma)(x)(t) \in \cP^\al(H')$ satisfy \[\mathrm{CZ}_{H'}(x') = \mathrm{CZ}_H(x) + I(\gamma,\eta_\al),\] where for $\eta \in \cL_\al$ we define $I(\gamma,\eta)$ as the Maslov index of the following loop of symplectic matrices. Make a non-canonical choice of a cylinder $\overline{\eta}$ from $\eta_\al$ to $\eta.$ This cylinder and the trivialization $\Phi_\al$ of $\eta_\al^* TM$ define homotopically-canonically a trivalization $\Phi=\overline{\eta}_* \Phi_\al$ of $\eta^* TM,$ that is an isomorphism $\Phi: \eta^* TM \to S^1 \times \R^{2n}$ of symplectic vector bundles. Similarly, choosing a cylinder $w$ from $\eta$ to $P(\gamma)\eta$ defines a trivialization $w_* \Phi$ of $P(\gamma)\eta^* TM.$ Since the differential $D\gamma \circ \eta: \eta^* TM \to P(\gamma)\eta^* TM$ is an isomorphism of symplectic vector bundles, we obtain another trivialization $P(\gamma)^* w_* \Phi$ of $\eta^* TM.$ The loop of symplectic matrices we consider is the difference loop $P(\gamma)^* w_* \Phi \circ \Phi^{-1}$ of the two trivializations. Since our manifold is $\al$-atoroidal, the Maslov index of this loop does not depend on the choices of cylinders made. Moreover, $I(\gamma,\eta)$ depends only on $[\gamma] \in \pi_1(\Ham(M))$ and the free homotopy class $\al$ of $\eta.$ Thus, as above we conclude that $I(\gamma,\eta) = I(\gamma,pt_M)$ and $I(\gamma,pt_M)=0$ by \cite[Proposition 10.1]{SeidelInvertibles}.

Hence $\rP(\gamma)$ enters the diagram $(\cL_\al M, \A_H) \xrightarrow{\rP(\ga)} (\cL_\al M, \A_{H'})$ of spaces with functionals, and hence determines an isomorphism of filtered complexes that preserves grading, and hence of the corresponding graded $r2p$ persistence modules. Compare \cite[Proposition 5.3]{UsherBoundaryHofer}.
\end{rmk}


A general observation is that given a Hamiltonian diffeomorphism $g$, and a regular class $\til{\phi} \in \til{\G},$ there is a natural morphism of $r2p$ persistence modules \[[\rP(g)]: HF^{(a,b)}(\til{\phi})_\al \to HF^{(a,b)}(g \circ \til{\phi} \circ g^{-1})_\al,\] which we call the {\em push-forward map.} This morphism is built by acting by $g$ on all the objects involved in the construction. Such morphisms in the context of fixed-point Floer homology of symplectormorphisms were recently introduced and used by D. Tonkonog \cite{Tonkonog}. The basic such action is the diffeomorphism \[\rP(g): \cL_\al M \to \cL_\al M,\] \[z(t) \mapsto g(z(t)).\]

Given a Hamiltonian $H \in \cH$ that generates a representative of $\til{\phi},$ the Hamiltonian $H\circ g^{-1}$ generates a representative of $g \circ \til{\phi} \circ g^{-1}$ and the restriction $\rP(g): \cP^\al(H) \to \cP^\al(H \circ g^{-1}), x(t) \mapsto g(x(t))$ is a bijection. Moreover, since $g \in \Ham(M),$ the symplectic area of the cylinder between the reference loop $\eta_\al \in \cL_\al$ and the loop $g\circ \eta_\al \in \cL_\al,$ (which is well-defined since $M$ is $\al$-atoroidal,) is in fact zero.

Hence we have the diagram \[(\cL_\al M, \A_H) \xrightarrow{\rP(g)} (\cL_\al M, \A_{H\circ g^{-1}})\] of spaces with functionals, that is \begin{equation}\label{Equation: push preserves action}\rP(g)^*\A_{H\circ g^{-1}} = \A_{H},\end{equation} the actions being computed with respect to the same reference loop $\eta_\al.$

What remains is to observe that the restricted map on generators, given a choice of almost complex structure $J \in \cJ,$ extends naturally to an isomorphism of filtered Floer complexes \[\rP(g): (CF(H,J)_\al,\A_H) \to ((CF(H \circ g^{-1}, g_* J)_\al),\A_{H\circ g^{-1}})\] where $(g_* J)_x = Dg(g^{-1}x) J_{g^{-1}x} D(g^{-1})(x),$ is the push-forward of the almost complex structure $J$ by the diffeomorphism $g.$

\begin{df}\label{Definition: general push map}
The isomorphism $\rP(g)$ of filtered Floer complexes gives the map
\[[\rP(g)]: HF^{(a,b)}(\til{\phi})_\al \to HF^{(a,b)}(g \circ \til{\phi} \circ g^{-1})_\al\]
of $r2p$ persistence modules, which we call the push-forward map.
\end{df}

The push-forward map is an example of an operator on Floer homology coming from actions of Hamiltonian loops on the loop space of $M.$ Consider a contractible loop $\gamma$ based at $g \in \Ham(M).$ It acts by \[\rP(\gamma):\cL_\al M \to \cL_\al M,\] \[z(t) \mapsto \gamma_t(z(t))\] on the loop space of $M$ in component $\al.$ Given a Hamiltoninan $H \in \cH$ there exists a natural Hamiltonian $H' = \rP(\gamma)_*H$ such that \[\rP(\gamma)^*\A_{H'} = \A_H.\] It is given by \[H'(t,x) = H_t \circ \gamma_t^{-1} + G_t,\] where $G_t \in \cH$ is the Hamiltonian generating $\gamma.$ Note that if $H$ generates the Hamiltonian isotopy $\{\phi_t\}$ then $H'$ generates the Hamiltonian isotopy $\{\gamma_t\phi_t g^{-1}\}.$ It is therefore clear that $H'$ is $\al$-non-degenerate if and only if $H$ is, and that $\rP(\gamma)$ establishes an action-preserving bijection $\cP^\al(H) \to \cP^\al(H'),$ which moreover extends to an isomorphism of filtered Floer complexes \[\rP(\gamma): (CF(H,J)_\al,\A_H) \to ((CF(H', \rP(\gamma)_* J)_\al),\A_{H'}),\] where $(\rP(\gamma)_* J)_{t,x} = D\gamma_t(\gamma_t^{-1}x) J_{t,\gamma_t^{-1}x} D(\gamma_t^{-1})(x).$


\bs
\begin{rmk}
If $\gamma$ is not contractible then \[\rP(\gamma)^*\A_{H'} = \A_H - A(\gamma\cdot g^{-1},\eta),\]
where $A$ is the value discussed in Remark \ref{Remark: FH descends to Ham} and was shown to vanish in our specific setting. Hence in our setting \[\rP(\gamma)^*\A_{H'} = \A_H,\] and $\rP(\gamma)$ gives an automorphism of Floer homology in action windows precisely as discussed for the case of contractible loops in $\Ham(M).$
\end{rmk}

We need the following simple observation on the map $\rP(\gamma)$ and continuation maps of almost complex structures.

\bs
\begin{lma}\label{Lemma: Push gamma general and continuations}
Given regular Floer continuation data $\overrightarrow{J}=\{(H,J_s)\}_{s \in \R}$ with $J_s \equiv J_0$ for $s \ll 0$ and $J_s \equiv J_1$ for $s \gg 0,$ there is a commutative diagram of filtered Floer complexes \begin{equation}\label{Diagram: Push gamma general and continuations}
  \displaystyle    \begin{array}{lll}
        CF(H,J_0)_\al  & \xrightarrow{\rP(\gamma)} & CF(H', \rP(\gamma)_* J_0)_\al\\
         \scriptstyle{C(\overrightarrow{J})}{\downarrow} &   &  \scriptstyle{C(\rP(\gamma)_*\overrightarrow{J})}{\downarrow}\\
       CF(H,J_1)_\al   & \xrightarrow{\rP(\gamma)} & CF(H', \rP(\gamma)_* J_1)_\al \\
      \end{array}
   \end{equation} where $P(\gamma)_* \overrightarrow{J}$ is the Floer continuation data $\{(H,P(\gamma)_*(J_s))\}_{s \in \R},$ and $C(\overrightarrow{J}),$ $C(\rP(\gamma)_*\overrightarrow{J})$ are Floer continuation maps.

\end{lma}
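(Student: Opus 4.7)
The plan is to realize the commutativity at the chain level by constructing a bijection between the moduli spaces that define the two continuation maps, induced by the pointwise action of $\rP(\gamma)$ on Floer cylinders. Concretely, for a solution $u: \R \times S^1 \to M$ of the continuation equation $\del_s u + J_s(t,u)(\del_t u - X_H(t,u))=0$ with the appropriate asymptotics in $\cP^\al(H)$, consider the cylinder $u'(s,t) := \gamma_t(u(s,t))$. I would first check that $u'$ has asymptotics in $\cP^\al(H')$ (these match the bijection $\rP(\gamma): \cP^\al(H) \to \cP^\al(H')$ already established above), and then verify that $u'$ solves the continuation equation with data $(H, \rP(\gamma)_* J_s)$, which unwraps to the Hamiltonian $H' = \rP(\gamma)_* H$ with the pushed-forward almost complex structures.

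The key computation reduces to the same identities that underlie Equation \eqref{Equation: loop preserves action}, applied with an $s$-dependent almost complex structure. Differentiating $u'=\gamma_t(u)$ gives $\del_s u' = D\gamma_t(u)\,\del_s u$ and $\del_t u' = X_{G_t}(u') + D\gamma_t(u)\,\del_t u$, while the relation between the Hamiltonian vector fields reads $X_{H'}(t,u') = X_{G_t}(u') + D\gamma_t(u) X_{H}(t,u)$. Hence $\del_t u' - X_{H'}(t,u') = D\gamma_t(u)(\del_t u - X_H(t,u))$. Combined with the defining identity $(\rP(\gamma)_*J_s)(t,u') = D\gamma_t(u)\circ J_s(t,u)\circ D\gamma_t(u)^{-1}$, the full continuation equation for $u'$ becomes $D\gamma_t(u)$ applied to the continuation equation for $u$, so one equation holds if and only if the other does. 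This sets up a bijection between the parametric moduli spaces $\cM(\overrightarrow{J})$ and $\cM(\rP(\gamma)_*\overrightarrow{J})$ which is compatible with the identification of generators.

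From here, the commutativity of \eqref{Diagram: Push gamma general and continuations} follows provided the bijection preserves the data used to define the matrix coefficients of the continuation maps: the Fredholm indices agree (since $\rP(\gamma)$ is a fibrewise symplectic isomorphism preserving the asymptotic Conley--Zehnder indices, exactly as in Remark \ref{Remark: FH descends to Ham}), the energies agree by \eqref{Equation: loop preserves action} applied to the $s$-family, and the zero-dimensional components correspond. With $\K$-coefficients of characteristic different from $2$ one must additionally check that coherent orientations on the two moduli problems are intertwined by the pushforward by $D\gamma_t$; this is standard once one transports the reference trivializations along $\gamma$, and is where the cleanest argument just appeals to naturality of coherent orientations under families of symplectic bundle isomorphisms.

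The main obstacle is the orientation/regularity bookkeeping rather than the geometric content: one must ensure that $\rP(\gamma)_*\overrightarrow{J}$ is again regular continuation data and that the pushforward is orientation preserving at the level of determinant lines. Both are formal once one sets up the coherent orientation scheme invariantly, but they are the only non-tautological points; the core identity reducing one Floer equation to the other is an immediate consequence of the change of variables $u \mapsto \gamma_t(u)$.
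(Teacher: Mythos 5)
Your proposal is correct and takes exactly the same approach as the paper: the paper's proof is a single sentence asserting that the change of variables $u(s,t) \mapsto v(s,t)=\gamma_t(u(s,t))$ gives a bijection between the two moduli spaces of continuation solutions, which is precisely the bijection you construct and verify in detail. The index, energy, and orientation bookkeeping you flag are left implicit in the paper, so your write-up is simply a more explicit version of the same argument.
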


This lemma is immediate once we change variables in the Floer continuation equation using the diffeomorphism $\rP(\gamma),$ namely there is a bijection between solutions $\{u(s,t)\}$ of the continuation equation for the operator $C(\overrightarrow{J})$ and solutions $\{v(s,t)\}$ of the continuation equation for $C(\rP(\gamma)_*\overrightarrow{J})$ given by $u(s,t) \mapsto v(s,t)=\gamma_t(u(s,t)).$

\section{Loop-rotation operators}\label{Section: loop rotation}
A time-periodic Hamiltonian $H \in \cH:= \sm{S^1 \times M, \R}$ is called {\it mean-normalized}
if $\int_M H_t \;\omega^n =0$ for all $t$, where $\dim M = 2n$. We write $\cH$ for the space of all
mean-normalized Hamiltonians in $\sm{S^1 \times M, \R}$. Consider $H \in \cH$  generating the Hamiltonian diffeomorphism $\phi_H$. Take the new Hamiltonian function $\Hk(t,x):= k H(tk,x).$
It generates $\phi_{\Hk} = \phi_{H}^k.$

We note that $\phi_{H} \phi_{\Hk} \phi_H^{-1} = \phi_{\Hk},$ hence  $\phi_H$ acts on the Floer homology of $\phi_{\Hk}.$ Assuming that $H$ is such that $\Hk$ is $\al$-non-degenerate, and denoting $\til{\phi}_{H}^k$ the class in $\til{\G}$ that $\Hk$ generates we have the morphism
\begin{equation}\label{Definition:Push k}
[\rP_k]:=[\rP(\phi_H)]: HF^{(a,b)}(\til{\phi}_{H}^k)_\al \to HF^{(a,b)}(\til{\phi}_{H}^k)_\al
\end{equation}
of filtered Floer homology  understood in the sense of the limit (see Definition \ref{Definition: $r2p$ persistence module of a Hamiltonian diffeomorphism} above).

On the other hand, since $\Hk(t,x) \equiv \Hk(t+\frac{1}{k},x),$ it is easy to see that the loop-rotation diffeomorphism \[\rR_k:\cL_\al M \to \cL_\al M,\;\;\; z(t) \mapsto z(t+\frac{1}{k}),\] satisfies \begin{equation}\label{Equation: rotation preserves Hk action}
(\rR_k)^*\A_{\Hk} = \A_{\Hk}.
\end{equation}
Hence $\rR_k$ restricts to an action-preserving bijection $\cP^\al(\Hk) \to \cP^\al(\Hk)$ and therefore for a generic almost complex structure $J \in \cJ,$ defines an isomorphism of filtered Floer complexes
\begin{equation}\label{eq-upsilon}
(CF(\Hk,J)_\al,\A_{\Hk}) \to (CF(\Hk, (\rR_k)_* J)_\al,\A_{\Hk})\;,
\end{equation}
where $((\rR_k)_* J)_t= J_{t+\frac{1}{k}}$.
Consider the induced morphism in filtered Floer homology, which is again understood in the sense
of the limit:

\bs
\begin{equation}\label{Definition: Rotation k}
[\rR_k]: HF^{(a,b)}(\til{\phi}_{H}^k)_\al \to HF^{(a,b)}(\til{\phi}_{H}^k)_\al\;.
\end{equation}
Several of the arguments that follow benefit from knowing that in fact the maps $[\rP_k]$ and $[\rR_k]$ on Floer homology in fact coincide.

\bs
\begin{lma}\label{Lemma: two descriptions}
The maps $[\rR_k]$ and $[\rP_k]$ coincide, and hence are simply different descriptions of the same map \[\rT_k : HF^{(a,b)}(\til{\phi}_{H}^k)_\al \to HF^{(a,b)}(\til{\phi}_{H}^k)_\al\] of $r2p$ persistence modules. \end{lma}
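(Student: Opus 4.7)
The plan is to show that $[\rR_k]$ and $[\rP_k]$ arise from the same underlying bijection on generators of the filtered Floer complex, and then to verify that this common bijection extends to the same morphism of $r2p$ persistence modules. Fixing a regular representative $(\Hk, J)$ of $\til{\phi}_H^k$, I would begin by computing explicitly how each operator acts on a $1$-periodic orbit $z(t) = \phi_H^{kt}(x)$ of $\Hk$. Using the $1$-periodicity identity $\phi_H^{s+1} = \phi_H^s \circ \phi_H$ from \eqref{eq-vsp-per-1}, the rotation gives
\[ \rR_k(z)(t) \;=\; z(t + 1/k) \;=\; \phi_H^{kt+1}(x) \;=\; \phi_H^{kt}(\phi_H(x)), \]
which is the orbit of $\Hk$ through $\phi_H(x)$. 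The push-forward gives
\[ \rP_k(z)(t) \;=\; \phi_H(z(t)) \;=\; \phi_H(\phi_H^{kt}(x)), \]
which is an orbit of the conjugated Hamiltonian $\Hk \circ \phi_H^{-1}$ also passing through $\phi_H(x)$ at $t=0$. Hence, once the generators of $CF(\Hk)_\al$ are identified with fixed points of $\phi_H^k$ in class $\al$ via evaluation at $t=0$, both operators induce the same bijection $x \mapsto \phi_H(x)$ on generators.

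Next, I would verify that the two chain-level realizations of this bijection agree as morphisms in $\cD^S$. Both are filtered isomorphisms (by Equations \eqref{Equation: rotation preserves Hk action} and \eqref{Equation: push preserves action}) and grading-preserving (by the $\al$-atoroidal condition, following Remark \ref{Remark: FH descends to Ham}), and both factor through continuation maps before landing in a canonical representative of the isomorphism class of $HF^{(a,b)}(\til{\phi}_H^k)_\al$. Invoking Lemma \ref{Lemma: Push gamma general and continuations} for $\rP_k$, together with its straightforward analogue for the rotation $\rR_k$ (which exchanges $J$ with $(\rR_k)_* J_t = J_{t+1/k}$ via the same kind of change-of-variables argument in the Floer continuation equation), I would conclude that both operators commute with the continuation identifications used in Definition \ref{Definition: $r2p$ persistence module of a Hamiltonian diffeomorphism}. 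Therefore the resulting two endomorphisms of the limit $HF^{(a,b)}(\til{\phi}_H^k)_\al$ are determined by the common bijection on generators and so coincide.

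The main subtlety I anticipate is bookkeeping the identification of targets: $\rR_k$ lands in the Floer complex with shifted almost complex structure $(\rR_k)_* J$, while $\rP_k$ lands in the complex for the conjugated Hamiltonian $\Hk \circ \phi_H^{-1}$ with pushed structure $(\phi_H)_* J$. Both targets become canonically isomorphic to $HF(\Hk,J)_\al$ in the limit, but verifying that the two chain-level isomorphisms glue to the same morphism requires producing a family of auxiliary Floer data interpolating between the two choices. Intuitively this interpolation amounts to the homotopy $\Phi_s(z)(t) = \phi_H^s(z(t+(1-s)/k))$ from $\Phi_0 = \rR_k$ to $\Phi_1 = \rP_k$ on the loop space $\cL_\al M$, each stage of which pulls the action functional of $\Hk$ back to the action of a Hamiltonian path representing $\til{\phi}_H^k$; rigorously, the identification of the two resulting morphisms is implemented through the naturality of the limits in Definition \ref{Definition: $r2p$ persistence module of a Hamiltonian diffeomorphism} combined with Remark \ref{Remark: FH descends to Ham}.
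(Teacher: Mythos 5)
Your argument correctly identifies the two substantive ingredients: the common bijection $x \mapsto \phi_H(x)$ on generators, and the need for an interpolating homotopy to reconcile the two targets (one with shifted $J$, one with conjugated $\Hk$) inside the inverse-limit description. However, the step from ``agreement on generators'' plus ``compatibility with continuation'' to ``the induced limit endomorphisms coincide'' is exactly the gap the paper has to work to close: the two chain-level maps land in \emph{different} complexes, and the identifications of these complexes with a canonical representative of the limit are different (one is a pure $J$-continuation, the other a Hamiltonian-path/loop identification in the sense of Remark~\ref{Remark: FH descends to Ham}), so the fact that a chain map is determined by its values on generators does not by itself show that $\Psi_1\circ\rR_k=\Psi_2\circ\rP_k$.

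Your proposed repair, the homotopy $\Phi_s(z)(t)=\phi_H^s(z(t+(1-s)/k))$, is not the right family for the Morse-theoretic chain-homotopy argument because it does \emph{not} preserve a single action functional: $\Phi_s^*\A_{\Hk}=\A_{G_s}$ with $G_s=(\Hk\circ\phi_H^s)_{t-(1-s)/k}$, and $G_s$ genuinely depends on $s$. (Each $G_s$ does represent $\til\phi_H^k$, as you note, but that only tells you each $[\Phi_s]$ is well-defined, not that the family is constant.) The breaking-of-trajectories argument that proves ``a path of symmetries of $(\cL_\al,\A_{\Hk})$ starting at $\id$ induces the identity'' requires the whole family to preserve the \emph{same} functional. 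This is precisely why the paper introduces the correcting contractible loop $\gamma_r$ and works with $\rQ_r=\rR_{r,k}^{-1}\rP(\gamma_r)\rP(\phi_H^r)$, which does satisfy $\rQ_r^*\A_{\Hk}=\A_{\Hk}$ for all $r$, has $\rQ_0=\id$, and whose endpoint identity $[\rQ_1]=\id$ then yields $[\rR_k]=[\rP_k]$ after noting $[\rP(\gamma_1)]=\id$. Without this loop-correction, or an explicit replacement of the chain-homotopy argument by one that tracks the $s$-dependence of the target action functionals and of the canonical identifications $\Psi_{G_s,\Hk}$, the proof does not go through. Appealing to ``naturality of limits'' at this point does not resolve the issue, since the naturality of $\Psi$ with respect to $\Phi_s$ is exactly what one has to prove.
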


\begin{rmk}\label{Remark: representation}
It is evident from the definition (cf. Lemma \ref{Lemma: Push gamma general and continuations}) that $(\rT_k)^k = \id,$ and hence $\rT_k$ defines a $\Z_k = \Z/(k)$-representation on $HF^{(a,b)}(\til{\phi}_{H}^k)_\al.$
\end{rmk}

\medskip
\noindent
{\sc Morse theoretical digression:}  The proof of Lemma \ref{Lemma: two descriptions} rests on the following picture in Morse theory. Consider a Morse function $f$ on a closed manifold $X$ and an isotopy $\{\psi_r\}_{r \in [0,1]}$ such that $\psi_r^* f = f$ for all $r \in [0,1].$ Then $\psi = \psi_1$ acts on the Morse homology $HM^{(a,b)}(f)$ in any window $(a,b),$ and moreover it acts by $\id.$ This can be seen immediately by considering the (relative) singular homology of sublevel sets $(\{f < b\},\{f < a\})$ of $f,$ and constructing a chain homotopy of the map induced by $\psi$ to $\id$ by considering the cylinders of the singular cycles traced by the isotopy $\{\psi_r\}.$

Let us sketch the Morse homological argument proving the above-mentioned statement. It will be important in the sequel as it readily extends to the Floer theoretical context.  Fix a generic Riemannian metric $\rho$ on $X,$ and denote $\rho_r:=(\psi_r)_*\rho.$  Observe that the map $\psi_r$ canonically identifies the filtered
Morse complexes $(CM_*(f, \rho_r),d_r)$ for all $r$ with $(W_*,\del):= (CM_*(f,\rho), d)$.

Consider the family of metrics $\rho_{r,s}$, $r \in [0,1]$, $s \in \R$ such that for $s \ll -1$ $\rho_{r,s} = \rho_r = (\psi_r)_*(\rho)$ and for $s \gg 1$ $\rho_{r,s} = \rho$. Look at the gradient flow equation
\begin{equation}
\label{eq-vsp-gradflow}
\frac{du}{ds}= -\nabla_{\rho_{r,s}}f(u(s))\;,
\end{equation}
where both $r$ and $u$ is considered as a variable. Look at the isolated solutions $(r,u)$ of this equation. In light of the identification above they define a map $\cS: W_* \to W_{*+1}$ which does not increase the filtration induced by $f$.

With this identification, the action of the diffeomorphism $\psi$ on the homology of $W$ is given
(on the chain level) by the continuation map $C$ induced by the path of metrics $\rho_{1,s}$.
We claim that $C$ is chain homotopic to the identity, i.e.,
\begin{equation}\label{eq-vsp-breaking}
C-\id = \del\cS-\cS\del\;.
\end{equation}

To see this, let us analyze the space of solutions of \eqref{eq-vsp-gradflow} connecting critical points
with equal Morse indices. Given regularity, it can be compactified to a manifold with boundary of dimension $1.$ Boundary contributions appear when either $r=0$ or $r=1,$ or when there is breaking of trajectories. For $r=0$ they correspond to $\id$, for $r=1,$ the solutions satisfy the continuation equation for $C$, while the breaking of trajectories gives us $\del \cS - \cS \del$. This proves \eqref{eq-vsp-breaking} and completes our digression.

\medskip
\noindent
\begin{proof} (of Lemma \ref{Lemma: two descriptions}) $\;$

\noindent
{\sc Step 1:} Take $r \in [0,1]$. Consider two Hamiltonians depending on the parameter $r$,
$E_r= \Hk \circ (\phi^r_H)^{-1}$ and $F_r= \Hk_{t+r/k}$.
The former generates the Hamiltonian path
$$\alpha_r(t):= \phi^r_H \phi^t_{\Hk}(\phi^r_H)^{-1}= \phi^r_H \phi^{kt}_H (\phi^r_H)^{-1}\;.$$
The latter Hamiltonian generates the path
$$\beta_r(t) := \phi_{\Hk}^{t+r/k}(\phi_{\Hk}^{r/k})^{-1} = \phi_{H}^{tk+r}(\phi^r_H)^{-1}\;.$$
Observe that both paths have the same endpoints,
$$\alpha_r(0) =\beta_r(0) = \id,\;\ \alpha_r(1)=\beta_r(1) = \phi^r_H \phi^{k}_H (\phi^r_H)^{-1}\;,$$
and, as one readily checks, they are homotopic with fixed endpoints. In other words,
$\beta_r= \gamma_r\alpha_r$, where $\gamma_r$ is a contractible loop.
It follows that $\rP(\gamma_r)^*\A_{F_r}= \A_{E_r}$. Furthermore, since the conjugation by $\phi_H^r$ takes the path $\phi^t_{\Hk}$ to $\phi^t_{E_r}$ we have $\rP(\phi_H^{r})^*\A_{E_r}= \A_{\Hk}$.
We conclude that
\begin{equation}\label{eq-1-vsp-lemma}
\rP(\phi_H^{r})^*\rP(\gamma_r)^*\A_{F_r}= \A_{\Hk}\;.
\end{equation}

\medskip
\noindent
{\sc Step 2:} Next, for $r \in [0,1],$ let $\rR_{r,k}$ be the diffeomorphism \[\rR_{r,k}:\cL_\al M \to \cL_\al M,\;\;z(t) \mapsto z(t+r/k).\] It satisfies
\begin{equation}\label{Equation: Rotation time r action preserving}
\rR_{r,k}^* \A_{F_r} = \A_{\Hk}.
\end{equation}
Put $\rQ_r:= \rR_{r,k}^{-1}\rP(\gamma_r)\rP(\phi_H^{r})$. Combining \eqref{Equation: Rotation time r action preserving} with \eqref{eq-1-vsp-lemma} we get that
\begin{equation}\label{eq-2-vsp-lemma}
\rQ_r^*\A_{\Hk}= \A_{\Hk}\;.
\end{equation}
It follows that $\rQ_r$ is a path of diffeomorphisms of loop spaces preserving $\A_{H_k}$.
Thus, for every $r$,  $\rQ_r$ induces the same morphism of filtered Floer homologies of $\Hk$.
The proof of this literally imitates the Morse theoretical argument presented above.
The action functional  $\A_{H_k}$ on the loop space $\cL_\al M$ plays the role of the Morse function $f$ on $X$, the diffeomorphisms $Q_r$ correspond to $\psi_r$ and loops of almost complex structures on
$M$ (which remained behind the scenes in our exposition)  determine Riemannian metrics on $\cL_\al M$.

Therefore, diffeomorphisms $\rQ_r$ induce the same
morphism $V \to V$, where we abbreviate  $V:= HF^{(a,b)}(\til{\phi}_{H}^k)_\al$  and understand
this space in the sense of the limit according to Definition \ref{Definition: $r2p$ persistence module of a Hamiltonian diffeomorphism} above. Since $[\rQ_0] =\id$, we have that $$[\rQ_1]=  [\rR_{1,k}^{-1}]\circ [\rP(\gamma_1)] \circ [\rP(\phi_H))]=\id\;.$$ It remains to notice that each of the factors $[\rR_{1,k}^{-1}]$, $[\rP(\gamma_1)]$
and $[\rP(\phi_H))]$ in the previous equation is an automorphism of $V$, and moreover
$\;[\rR_{1,k}] = [\rR_k]$, $[\rP(\gamma_1)] = \id$ since $\gamma_1$ is a contractible
loop, and $[\rP(\phi_H))]=[\rP_k]$. Therefore, $[\rR_k]=[\rP_k]$ as required.
\end{proof}

\bs
The fact that $\rT_k$ is a morphism of $r2p$ persistence modules in particular implies the following. Consider the natural comparison map \[j_d:=j_{(a,b),(a+d,b+d)}: HF^{(a,b)}(\Hk)_\al \to HF^{(a+d,b+d)}(\Hk)_\al\] between Floer homology groups in two windows $(a,b)$ and $(a+d,b+d)$ for $d \geq 0.$ Then the following diagram commutes.

\begin{equation}\label{Diagram: rotation commutes with comparison}
  \displaystyle    \begin{array}{clc}
         HF^{(a,b)}(\Hk)_\al & \xrightarrow{j_d} & HF^{(a+d,b+d)}(\Hk)_\al\\
         \scriptstyle{\rT_k}{\downarrow} &   &  \scriptstyle{\rT_k}{\downarrow}\\
         HF^{(a,b)}(\Hk)_\al & \xrightarrow{j_d} & HF^{(a+d,b+d)}(\Hk)_\al\\
      \end{array}
   \end{equation}

Similarly, the map $\rT_k$ commutes with Floer continuation maps $HF(\Fk)_\al \to HF(\Gk)_\al$ induced from continuation maps $HF(F)_\al$ to $HF(G)_\al$ as follows: a family $\{H_r\}_{r \in \R}$ of Hamiltonians with $H_r \equiv F$ for $r\ll0,$ and $H_r \equiv G$ for $r\gg 0$ induces the family $\{\Hk_r\}$ between $\Fk, \Gk.$ We note that an interpolation homotopy between $F$ and $G$ induces in this way an interpolation homotopy between $\Fk$ and $\Gk.$

\bs
\begin{lma}\label{Lemma: continuation representation maps}
Consider an interpolation continuation map \[C(\Fk,\Gk): HF^{(a,b)}(\Fk)_\al \to HF^{(a,b)+k\Max(G-F)}(\Gk)_\al.\] Then the following diagram commutes.

\begin{equation}\label{Diagram: rotation commutes with continuation}
  \displaystyle    \begin{array}{clc}
         HF^{(a,b)}(\Fk)_\al & \xrightarrow{C(\Fk,\Gk)} & HF^{(a,b)+k\Max(G-F)}(\Gk)_\al\\
         \scriptstyle{\rT_k}{\downarrow} &   &  \scriptstyle{\rT_k}{\downarrow}\\
         HF^{(a,b)}(\Fk)_\al & \xrightarrow{C(\Fk,\Gk)} & HF^{(a,b)+k\Max(G-F)}(\Gk)_\al\\
      \end{array}
   \end{equation}
\end{lma}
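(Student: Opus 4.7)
The plan is to reduce this lemma to the analogous statement for the push-forward maps (Lemma \ref{Lemma: Push gamma general and continuations}) via the identification $\rT_k = [\rR_k]$ provided by Lemma \ref{Lemma: two descriptions}, and then carry out a direct change-of-variables argument on the Floer continuation moduli spaces.

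First I would set up the continuation data. Let $\{H_r\}_{r \in \R}$ be a regular homotopy with $H_r \equiv F$ for $s \ll 0$ and $H_r \equiv G$ for $s \gg 0$, and $\{J_r\}_{r \in \R}$ a corresponding homotopy of (loops of) $\omega$-compatible almost complex structures on $M$. This induces the lifted family $\{\Hk_r(t,x)=kH_r(kt,x)\}$ and $\{J^{(k)}_r(t,x)=J_r(kt,x)\}$ interpolating between $(\Fk,J^{(k)}_F)$ and $(\Gk,J^{(k)}_G)$. The crucial point, completely analogous to Equation \eqref{Equation: rotation preserves Hk action}, is that each $\Hk_r$ is $\tfrac{1}{k}$-periodic in $t$, so the rotation $\rR_k: z(t)\mapsto z(t+1/k)$ acts on the entire path of loop spaces preserving the $s$-dependent action functionals.

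Next I would run the change of variables argument verbatim as in Lemma \ref{Lemma: Push gamma general and continuations}. Given a solution $u(s,t)$ of the Floer continuation equation
\[\del_s u + J^{(k)}_r(t,u)\bigl(\del_t u - X_{\Hk_r}(t,u)\bigr) = 0,\]
the map $v(s,t):=u(s,t+1/k)$ satisfies the continuation equation for the data $\{\Hk_r, (\rR_k)_* J^{(k)}_r\}$, simply because $X_{\Hk_r}$ is $\tfrac{1}{k}$-periodic in $t$. This assignment is a bijection between the corresponding moduli spaces and preserves the dimension-zero components, hence the isolated solutions that define the continuation maps. Consequently, one obtains a commutative diagram of filtered chain complexes
\[\begin{array}{ccc}
CF(\Fk,J^{(k)}_F)_\al & \xrightarrow{\rR_k} & CF(\Fk,(\rR_k)_*J^{(k)}_F)_\al \\
\scriptstyle{C(\overrightarrow{J^{(k)}})}\downarrow & & \downarrow\scriptstyle{C((\rR_k)_*\overrightarrow{J^{(k)}})}\\
CF(\Gk,J^{(k)}_G)_\al & \xrightarrow{\rR_k} & CF(\Gk,(\rR_k)_*J^{(k)}_G)_\al
\end{array}\]
completely parallel to Diagram \eqref{Diagram: Push gamma general and continuations}.

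Finally I would pass to homology in the windows $(a,b)$ and $(a,b)+k\Osc(G-F)$ (using the standard action-energy estimate that places the image of $C(\Fk,\Gk)$ in the shifted window), and then take the limit over regular data as in Definitions \ref{Definition: $r2p$ persistence module of a Hamiltonian diffeomorphism} and \ref{Definition: Floer homology for degenerate paths}. Because the canonical comparison isomorphisms $\Psi^{(a,b)}_{(H,J),(H,J')}$ between Floer complexes for different almost complex structures are themselves continuation maps and hence commute with $\rR_k$ by the same argument, the chain-level commutativity descends to the promised equality of morphisms of $r2p$ persistence modules. The only mildly subtle point, which is the main thing to verify carefully, is that the regularity of $\{J^{(k)}_r\}$ is preserved under $(\rR_k)_*$ so that both continuation maps in the diagram are genuinely defined on the nose; this is automatic here because $(\rR_k)_*$ acts on the space of almost complex structures by a diffeomorphism that respects the structure of the Floer operator.
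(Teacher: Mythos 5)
Your proposal is correct and follows essentially the same route as the paper's own proof: both exploit the fact that the interpolating Hamiltonian between $\Fk$ and $\Gk$ is $\tfrac{1}{k}$-periodic in $t$ (the paper uses the linear interpolation, you lift a homotopy $\{H_r\}$; either works), deduce a bijection of continuation moduli spaces via $u(s,t)\mapsto u(s,t+1/k)$ in parallel to Lemma~\ref{Lemma: Push gamma general and continuations}, and then pass to homology using independence of the continuation map on the choice of almost complex structure. One small slip: the target window should be shifted by $k\Max(G-F)$ as in the lemma statement, not by $k\Osc(G-F)$, though since a further comparison map can be post-composed this does not affect the argument.
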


\begin{proof} Consider an interpolation \[C_{s}(t,x)=C(s,t,x)=(1-\rho(s)) \Fk(x,t) + \rho(s)\Gk(x,t)\] between $\Fk,$ and $\Gk.$ This interpolation, upon the choice of a generic $(s,t)$-dependent compatible almost complex structure with $J(s,t) \equiv J_-(t),$ for $s \ll 0$ and $J(s,t) \equiv J_+(t)$ for $s \gg 0$ regular for $\Fk$ and $\Gk$ respectively, by standard action estimates, gives rise to the Floer continuation map \[ HF^{(a,b)}(\Fk)_\al  \xrightarrow{C(\Fk,\Gk)}  HF^{(a,b)+k\Max(G-F)}(\Gk)_\al\] the matrix coefficient between $x \in \cP^\al(\Fk)$ and $y \in \cP^\al(\Gk)$ with $i(x)=i(y)$ of whose chain level operator is given by counting solutions to the equation \[ \del_s u + J(s,t,u) (\del_t u - X_{C_{s}}(t,u)) = 0, \] with asymptotic boundary conditions  $u(s,t) \xrightarrow{s \to -\infty} x,$ $u(s,t) \xrightarrow{s \to \infty} y.$ Note that the operator on the level of homology does not depend on the choice of $J(s,t).$ Similarly to Lemma \ref{Lemma: Push gamma general and continuations}, as $C(s,t+\frac{1}{k}) \equiv C(s,t),$ the transformation $u(s,t) \mapsto v(s,t):= \rR_k u(s,t) = u(s,t+\frac{1}{k})$ establishes a one-to-one correspondence between the solutions $u(s,t)$ of the above continuation equation and the solutions $v(s,t)$ of the equation \[ \del_s v + J(s,t+\frac{1}{k}s,v) (\del_t v - X_{C_{s}}(t,v)) = 0\] with asymptotic boundary conditions $v(s,t) \xrightarrow{s \to -\infty} \rR_k(x),$ $v(s,t) \xrightarrow{s \to \infty} \rR_k(y).$ Since the continuation map on the homology level does not depend on the choice of almost complex structures, the lemma now follows immediately.

\end{proof}

%
%

\section{Invariants}\label{Section: invariant}

\subsection{The $\Z_k$ spectral spread}\label{Subsection: spread}
In the spirit of persistent homology (see \cite{Weinberger, Ghrist,CarlssonTopologyData} for surveys),
a rapidly developing area lying on the borderline of algebraic topology and topological data analysis, we shall use windows where $\rT_k$ does not act trivially to separate $\phi_H$ from autonomous Hamiltonian diffeomorphisms.

For $k \in \Z, \; k\geq 1,$ and a Hamiltonian $H \in \cH$ we make the following definition. Put \[\rS_k:=\rT_k -\id.\]

Composing $\rS_k$ with the comparison map $j_d: HF^{(a,b)}(\Hk)_\al \to HF^{(a+d,b+d)}(\Hk)_\al,$ we obtain a map \[j_d \circ \rS_k = \rS_k \circ j_d: HF^{(a,b)}(\Hk)_\al \to HF^{(a+d,b+d)}(\Hk)_\al.\]

This brings us to the definition of the main invariant of this paper.

\medskip
\begin{df}\label{Definition: the invariant}(The invariant)
\[w_{k,\al}(H):= \sup \{d \geq 0: \; j_d \circ \rS_k \neq 0 \; \text{for some window} \; (a,b)\}\]
\end{df}

The following proposition shows that this indeed defines an invariant and lists its properties relevant to subsequent discussion.
\medskip
\begin{prop}\label{Proposition: properties of the invariant}
\begin{enumerate}[label=(\roman{*}), ref=(\roman{*})] The assignment $H \mapsto w_{k,\al}(H)$ satisfies the following properties:
\item \label{itm:property 1} The invariant $w_{k,\al}(H) \geq 0$ is a finite number. In fact \[w_{k,\al}(H) \leq k \cdot (\Max(H) - \Min(H)).\]
\item \label{itm:property 2} $w_{k,\al}(H)$ depends only on the diffeomorphism $\phi_H,$ hence we write $w_{k,\al}(\phi_H).$
\item \label{itm:property 3} $w_{k,\al}: \G \to [0,\infty)$ is Lipschitz in Hofer's metric on $\G.$ In terms of Hamiltonians \[|w_{k,\al}(F) - w_{k,\al}(G)| \leq k \cdot (\Max(F-G) - \Min(F-G)).\] In particular $w_{k,\al}$ extends to arbitrary Hamiltonian diffeomorphisms.
\item \label{itm:property 4} $w_{k,\al}(\phi) = 0$ for every autonomous Hamiltonian diffeomorphism $\phi.$

\item \label{itm:property 6} The invariant $w_{k,\al}$ is equivariant with respect to the action of $\Symp(M)$ on $\Ham(M)$ by conjugation and the natural  action on $\pi_0 (\cL).$ That is given $\psi \in \Symp(M),$ and $\phi \in \Ham(M),$ we have \[w_{k,\psi \cdot \al}(\psi \phi \psi^{-1}) = w_{k,\al}(\phi).\]
\item \label{itm:property 7} The invariant $w_{k,\al}$ does not change under stabilization. Given a closed connected symplectically aspherical manifold $(N,\om_N),$ and denoting by $pt_N \in \pi_0 (\cL M)$ the class of contractible loops and $\id_N \in \Ham(N)$ the identity transformation, we have for all $\phi \in \Ham(M)$ \[w_{k,\al \times pt_N}(\phi \times \id_N) = w_{k,\al}(\phi),\] where $\al \times pt_N \in \pi_0(\cL(M \times N))$ comes from the natural set isomorphism \[\pi_0(\cL(M \times N)) \cong \pi_0(\cL M) \times \pi_0(\cL N).\]

\end{enumerate}
\end{prop}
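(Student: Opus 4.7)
The plan is to verify each property by combining standard functorial properties of filtered Floer homology with the two key features of $\rT_k$ already established: it commutes with comparison maps (diagram \eqref{Diagram: rotation commutes with comparison}) and with continuation maps (Lemma \ref{Lemma: continuation representation maps}).

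For the finiteness bound (i), I would show that $j_d \circ \rS_k$ vanishes on every window once $d > k\Osc(H)$. Decompose the underlying barcode of $HF(\Hk)_\al$ into finite and infinite bars. The finite bars have length bounded by the boundary depth, which by Usher's estimate is at most $\Osc(\Hk) = k \Osc(H)$; once $d$ exceeds this, the comparison map $j_d$ kills their contribution. On each infinite bar $\rT_k$ acts by a $k$-th root of unity (since $(\rT_k)^k = \id$), and I would argue this root is $1$: continue $\Hk$ to a $C^2$-small autonomous Hamiltonian on which, by (iv) proved below, $\rT_k = \id$, and use Lemma \ref{Lemma: continuation representation maps} to transport this triviality to the infinite-bar (stable) part of $HF(\Hk)_\al$. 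Hence $\rS_k$ vanishes there and $j_d \rS_k = 0$ for $d > k\Osc(H)$. Property (iv) is proved directly: for time-independent $H$ the full circle action $z(t)\mapsto z(t+s)$ preserves $\A_{\Hk}$, and $\rR_k$ is the endpoint of a path in this circle based at $\id$; the Morse/Floer chain-homotopy argument spelled out in the proof of Lemma \ref{Lemma: two descriptions}, applied to this path, yields $[\rR_k]=\id$ and hence $\rS_k\equiv 0$.

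The remaining properties are naturality statements. For (ii), Remark \ref{Remark: FH descends to Ham} reduces the claim to verifying that the canonical isomorphism $\rP(\gamma)$ between $r2p$ persistence modules attached to different Hamiltonian representatives of $\phi$ commutes with $\rR_k$; this is immediate since $\rP(\gamma)$ acts pointwise in $M$ while $\rR_k$ acts on the loop parameter $t$. For (iii), a standard $\delta$-interleaving argument: continuation maps between $\Fk$ and $\Gk$ shift action by at most $k\Max(G-F)$ and $-k\Min(G-F)$ respectively, and by Lemma \ref{Lemma: continuation representation maps} they intertwine $\rT_k$, hence $\rS_k$; inserting them around $j_d\circ\rS_k$ and using nonvanishing yields $|w_{k,\al}(F)-w_{k,\al}(G)|\leq k\Osc(F-G)$. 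For (vi), the push-forward $\rP(\psi)$ of Definition \ref{Definition: general push map} gives an isomorphism between the $r2p$ persistence modules of $\phi$ in class $\al$ and of $\psi\phi\psi^{-1}$ in class $\psi\cdot\al$, commuting with $\rR_k$ by the same pointwise reasoning. For (vii), perturb $\id_N$ by the flow of a small Morse function $f$ on $N$; the orbits of $\Hk\oplus(\epsilon f)^{(k)}$ in class $\al\times pt_N$ are products of orbits of $\Hk$ in class $\al$ with constant orbits at critical points of $f$, and a K\"unneth-type isomorphism of filtered complexes identifies the Floer homology with $HF(\Hk)_\al\otimes H(N)$. Since $\rR_k$ fixes constant loops on $N$, the rotation operator decomposes as $\rT_k \otimes \id$, and the invariant is preserved in the limit $\epsilon\to 0$.

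The delicate point is (i): one must rule out persistence of $\rS_k$ on infinite bars, which is where the aspherical hypothesis and the continuation-to-autonomous stability argument enter decisively, since \emph{a priori} $\rT_k$ could act nontrivially on the stable part and render $w_{k,\al}$ infinite. The K\"unneth-compatibility in (vii) is the second obstacle, requiring that the product decomposition be natural with respect to both the filtration and $\rT_k$, and that it survive passage to the $\epsilon\to 0$ limit via the limit-of-indiscrete-groupoid formalism of Definition \ref{Definition: Floer homology for degenerate paths}.
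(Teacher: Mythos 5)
For items (ii), (iii), (vi), (vii) your sketch follows the same route as the paper: commutativity of $\rT_k$ with push-forward, comparison, and continuation maps, plus K\"unneth for the stabilization. The structural ideas match, though the paper is more careful about tracking the $\delta$-errors in (vii) (it uses the four-intervals Lemma \ref{Lemma: 4 intervals} and a perturbation bookkeeping scheme you only allude to).

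Where you diverge seriously is (i). The paper's proof is a one-liner: (i) is an \emph{immediate consequence} of (iii) and (iv), since $w_{k,\al}(0)=0$ for the zero Hamiltonian (autonomous, generates $\id$), and Lipschitz gives $w_{k,\al}(H)=|w_{k,\al}(H)-w_{k,\al}(0)|\le k\cdot\Osc(H)$. You instead mount a barcode decomposition into finite and infinite bars and argue separately, which is not only much heavier but also has a genuine gap: your claim that $\rS_k$ vanishes on the stable (infinite-bar) part is asserted via a ``continuation to a small autonomous Hamiltonian'' argument, but you never actually construct the chain of maps showing the round-trip continuation surjects onto the tail submodule and intertwines $\rS_k$ in the right direction. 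Moreover, for a primitive nontrivial $\al$ the total Floer homology $HF^{(-\infty,\infty)}(\Hk)_\al$ vanishes (no $1$-periodic orbits of a $C^2$-small Hamiltonian in a nontrivial class), so there are no infinite bars at all; you could have used this to collapse your argument, but you don't notice it, and the argument as written isn't a proof.

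There is also a gap in your (iv). For autonomous $H$ the Hamiltonian $\Hk$ is degenerate, so one cannot directly apply the chain-homotopy argument of Lemma \ref{Lemma: two descriptions}; the paper's proof explicitly perturbs $h$ to a non-degenerate $H$ with $\Osc(H-h)<\eps$, constructs the null-homotopy $\cN$ for the perturbed flow, tracks that it shifts action by at most $2k\eps$, and then sends $\eps\to0$ using (iii). Your phrasing ``apply the Morse/Floer chain-homotopy argument to this path'' skips the perturbation and the error estimate, which is exactly where the technical content of (iv) lies; without it, the chain homotopy would have to be run through the degenerate limit apparatus of Definition \ref{Definition: Floer homology for degenerate paths}, and you would still have to verify the path of diffeomorphisms $\rR_{s,k}$ is compatible with it.
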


The following two propositions deal with lower bounds on the invariant $w_{k,\al}$ in certain specific situations.

\medskip
\begin{prop} \label{itm:property 5}
Assume that $\Hk$ is non-degenerate. If $\al$ is a primitive class, and all pairs of generators of $CF(\Hk)_\al$ of index difference $1$ have action difference (in absolute value) at least $D>0,$ then $w_{k,\al}(\phi_H)\geq D.$
\end{prop}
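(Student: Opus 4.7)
The plan is to combine two consequences of the hypotheses. First, primitivity of $\al$ forces the loop-rotation operator $\rR_k$ to act freely on the set $\cP^\al(\Hk)$ of generators: if some loop $z \in \cL_\al M$ satisfied $z(t+1/k) \equiv z(t)$, then $z$ would factor through a $k$-fold cover of $S^1$, contradicting primitivity. Consequently every $\rR_k$-orbit has length exactly $k$, so in particular $\rR_k x \neq x$ for each generator $x$; and since $\rR_k$ preserves both the action functional (by \eqref{Equation: rotation preserves Hk action}) and the Conley-Zehnder index, the distinct generators $x$ and $\rR_k x$ share the same action $a_x$ and the same index. Second, the action-gap hypothesis forces the induced Floer differential on $CF^{(a,b)}(\Hk)_\al$ to vanish on every action window of width at most $D$: if $\langle \del x, y \rangle \neq 0$ with both $x, y$ in such a window, then $i(x)-i(y) = 1$ forces $\A_{\Hk}(y) \leq \A_{\Hk}(x) - D < b - D \leq a$, so $y$ is killed in the quotient $CF^{<b}/CF^{<a}$. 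Hence $HF^{(a,b)}(\Hk)_\al \cong CF^{(a,b)}(\Hk)_\al$ for every window of width $\leq D$.

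Now pick any generator $x \in \cP^\al(\Hk)$ (the statement is vacuous otherwise), let $a_x$ be its action, and choose $\epsilon > 0$ small enough that $a_x - D + \epsilon$ and $a_x + \epsilon$ both lie outside the finite set $\spal{\Hk}$. Set
\[
W_\epsilon := (a_x - D + \epsilon,\, a_x + \epsilon),
\]
which has width $D$ and contains $a_x$ in its interior. By the two facts above, $[x]$ and $[\rR_k x]$ are distinct nonzero elements of $HF^{W_\epsilon}(\Hk)_\al$, and therefore
\[
\rS_k([x]) \;=\; [\rR_k x] - [x] \;\in\; HF^{W_\epsilon}(\Hk)_\al
\]
is nonzero, represented on the chain level by $\rR_k x - x$, a chain of valuation $a_x$.

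Finally, for any $d \in [0,\, D - \epsilon)$ chosen so that the endpoints of $W_\epsilon + d$ also avoid $\spal{\Hk}$, the shifted window again has width $D$, so its differential vanishes as well, and the comparison map $j_d$ is induced by the chain-level inclusion. The inequality $a_x \geq (a_x - D + \epsilon) + d$ holds precisely because $d \leq D - \epsilon$, so the chain $\rR_k x - x$ survives as a nonzero element of $CF^{W_\epsilon + d}$, whence $j_d(\rS_k([x])) \neq 0$ in $HF^{W_\epsilon + d}(\Hk)_\al$. By Definition \ref{Definition: the invariant} this gives $w_{k,\al}(\phi_H) \geq D - \epsilon$ for each such small $\epsilon > 0$; letting $\epsilon \to 0^+$ concludes $w_{k,\al}(\phi_H) \geq D$. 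The only technical point is avoiding the finite set $\spal{\Hk}$ when choosing $\epsilon$ and $d$, which is routine under the non-degeneracy assumption on $\Hk$.
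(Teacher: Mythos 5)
Your proof is correct and takes essentially the same approach as the paper: pick a generator $x$ with action $a_x$, use the index-$1$ action-gap hypothesis to show that $x$ and its rotate define nonzero classes in windows of width $D$ around $a_x$, use primitivity of $\al$ to see $\rR_k$ acts freely on generators so that $\rS_k$ is nonzero, and then track this nonvanishing through the comparison maps $j_d$ up to $d \to D$. Your version is a touch more explicit at the chain level — you observe that the induced differential vanishes on any window of width $\leq D$ so that $HF^{(a,b)} \cong CF^{(a,b)}$, whereas the paper phrases this as "$x$ defines a non-trivial class which persists under comparison maps" and invokes the identification $[\rR_k] = [\rP(\phi_H)]$ of Lemma \ref{Lemma: two descriptions} to see the permutation action on the $k$-tuple $\{\phi_H^e(x)\}$ — but the content and the epsilon bookkeeping are the same.
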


\begin{prop}\label{Propsotion: Conley}
The subset of $\phi \in \Ham(M)$ for which there exists $k \in \Z_{>0}$ with $w_{k,pt_M}(\phi) > 0$ contains a $C^\infty$ dense subset of $Ham(M).$
\end{prop}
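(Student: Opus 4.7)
\begin{pfs}
The plan is to combine a generic non-degeneracy result with the Conley conjecture for symplectically aspherical manifolds in order to produce, $C^\infty$-densely, Hamiltonian diffeomorphisms whose $k$-th iterate exhibits a cluster of primitive periodic orbits in the contractible class on which the loop-rotation operator $\rT_k$ acts non-trivially.

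First I would recall that the set $\Ham^{snd}(M)$ of strongly non-degenerate Hamiltonian diffeomorphisms (for which $\phi^k$ is non-degenerate for every $k \geq 1$) is $C^\infty$-residual in $\Ham(M)$, by a classical Sard--Smale argument. Fix any $\phi$ in this residual subset. By the Conley conjecture for closed symplectically aspherical manifolds in the form due to Ginzburg and Ginzburg--G\"urel, $\phi$ admits infinitely many contractible periodic orbits, and since each iterate $\phi^k$ has only finitely many fixed points, this forces the existence of \emph{primitive} (non-iterated) contractible periodic orbits of arbitrarily large prime period. Pick such a prime $k \geq 2$ and a corresponding primitive contractible $k$-periodic orbit $\mathcal{O} = \{x_0, x_1 = \phi(x_0), \ldots, x_{k-1} = \phi^{k-1}(x_0)\}$; the $x_i$ are then pairwise distinct non-degenerate fixed points of $\phi^k$, and the associated $1$-periodic orbits $y_i(t) = \phi^{kt}_H(x_i)$ of the flow generated by $\Hk$ all lie in the class $pt_M$.

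The generators $e_i := [y_i] \in CF(\Hk)_{pt_M}$ share a common action value $a_* := \A_{\Hk}(e_0)$ by observation $(\spadesuit)$ together with the action-preservation identity \eqref{Equation: rotation preserves Hk action}, and they share a common Conley--Zehnder degree, since they are related by the push-forward action of $\phi$, which preserves grading in the atoroidal setting. By an arbitrarily $C^\infty$-small perturbation of $H$ that keeps $\mathcal{O}$ and the non-degeneracy of $\phi^k$ intact (both open conditions), I may further assume that $a_*$ is an isolated point of $Spec_{pt_M}(\Hk)$; fix $\eta > 0$ with $(a_* - \eta, a_* + \eta) \cap Spec_{pt_M}(\Hk) = \{a_*\}$. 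For $0 < \varepsilon < \eta$ and the window $(a,b) = (a_* - \varepsilon, a_* + \varepsilon)$, the subquotient complex $CF^{(a,b)}(\Hk)_{pt_M}$ has basis $\{e_0, \ldots, e_{k-1}\}$ concentrated in one Conley--Zehnder grading, so its induced differential vanishes and $HF^{(a,b)}(\Hk)_{pt_M} \cong \K^k$ with the $[e_i]$ as basis.

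By Lemma \ref{Lemma: two descriptions}, $\rT_k$ acts on this space as the cyclic permutation $[e_i] \mapsto [e_{i+1}]$ (indices mod $k$), hence $\rS_k = \rT_k - \id$ is non-zero. For every $d$ with $0 \leq d < \eta - \varepsilon$ the shifted window $(a+d, b+d)$ still meets $Spec_{pt_M}(\Hk)$ only at $a_*$, so the comparison map $j_d$ restricts to an isomorphism on the span of the $[e_i]$'s; combining this with the commutativity \eqref{Diagram: rotation commutes with comparison} of $\rT_k$ with $j_d$ yields $j_d \circ \rS_k \neq 0$. Thus $w_{k,pt_M}(\phi) \geq \eta - \varepsilon > 0$, and $\phi$ belongs to the required subset. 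The main obstacle is the invocation of the Conley conjecture in the form producing a primitive contractible orbit of period $\geq 2$; this is exactly what the Ginzburg and Ginzburg--G\"urel theorems for symplectically aspherical manifolds furnish, combined with the finiteness of fixed points of each iterate in the strongly non-degenerate regime.
\end{pfs}
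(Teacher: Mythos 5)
Your argument follows the same route as the paper's: restrict to the $C^\infty$-residual set of strongly non-degenerate diffeomorphisms, invoke the Conley conjecture to produce a primitive contractible $p$-periodic orbit for some prime $p$, note that $\rT_p$ cyclically permutes the corresponding Floer generators with common action $a_*$, and read off $w_{p,pt_M}(\phi) > 0$ from a window isolating $a_*$. Two details should be repaired.

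First, the $C^\infty$-small perturbation of $H$ to make $a_*$ isolated is unnecessary and logically awkward. Since $\phi$ is strongly non-degenerate, $\phi^k$ has finitely many contractible fixed points, so $Spec_{pt_M}(\Hk)$ is already a finite set and $a_*$ is automatically isolated; the paper simply takes $\eps$ smaller than the minimal action gap. Moreover, perturbing $H$ (hence $\phi$) in the middle of the argument means you would be concluding $w_{k,pt_M}(\phi') > 0$ for the perturbed $\phi'$ rather than for the $\phi$ you fixed from the residual set, and the claim that one can perturb while "keeping $\mathcal{O}$ intact" (as an open condition) is not literally true. Dropping this step entirely cleans up the argument.

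Second, the shift bound $0 \leq d < \eta - \varepsilon$ overshoots. With the symmetric window $(a,b) = (a_* - \varepsilon, a_* + \varepsilon)$, the shifted window $(a_* - \varepsilon + d, a_* + \varepsilon + d)$ fails to contain $a_*$ once $d \geq \varepsilon$, at which point the image of each $[e_i]$ under $j_d$ is zero and $j_d \circ \rS_k$ vanishes. The correct constraint is $d < \min(\varepsilon,\, \eta - \varepsilon)$, which still gives $w_{k,pt_M}(\phi) > 0$. (To get a bound close to $\eta$, as in the paper, one must use asymmetric windows $(a_* - \varepsilon_1, a_* + \varepsilon_2)$ with $\varepsilon_2$ small, as the paper's phrasing "for all windows $(\A - \eps_1, \A + \eps_2)$" indicates.) Neither issue affects the conclusion of the proposition, but both should be fixed in a final write-up.
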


\medskip

Since by Proposition \ref{Proposition: properties of the invariant} the subset \[\mathcal{U} = \displaystyle \bigcup_{k \in \Z_{>0}}\{\phi \in \Ham(M)| \;w_{k,pt_M}(\phi) > 0\}\] of $\Ham(M)$ is open in the metric topology of the Hofer metric, Proposition \ref{Propsotion: Conley} implies Theorem \ref{thm-generic} (see Section \ref{Section:Conley} for more details).

\medskip

In the remainder of this section we prove Propositions \ref{Proposition: properties of the invariant}, \ref{itm:property 5}, and \ref{Propsotion: Conley}.

\bs
\begin{proof}(Proposition \ref{Proposition: properties of the invariant})

Property \ref{itm:property 1} is an immediate consequence of properties \ref{itm:property 3} and \ref{itm:property 4}.


\bs

For \ref{itm:property 2} we note that if $\{ \phi_H^t \}$ and $\{\phi_{H'}^t\}$ have the same endpoint $\phi,$ then the action $\rP(\gamma^{(k)})$ of the $k$-iterated loop $\gamma^{(k)}_t:=\gamma_{kt}$ of the difference loop $\{\gamma_t=\phi^t_{H'}(\phi^t_{H})^{-1}\}_{t \in [0,1]}$ on the loop space $\cL_\al M$ by Remark \ref{Remark: FH descends to Ham} provides an identification the $r2p$ persistence modules \[[\rP(\gamma^{(k)})]: HF^{(a,b)}([\{ \phi_{\Hk}^t \}])_\al \to HF^{(a,b)}([\{ \phi_{{H'}^{(k)}}^t \}])_\al.\] What remains to show is that this identification commutes with $\rT_k.$ To this end note that the diffeomorphism $\rR_k: \cL_\al M \to \cL_\al M$ which induces $\rT_k$ on the level of $r2p$ persistence modules commutes with the diffeomorphism $\rP(\gamma^{(k)}).$ The proof is now immediate.

\bs
\begin{rmk}
Note that in fact, since we only consider shifts of intervals in the definition of $w_{k,\al},$ for the purposes of this proof, it is not actually necessary to show that the shift $I(\gamma,\eta)$ (cf. Remark \ref{Remark: FH descends to Ham}) of action functionals vanishes.
\end{rmk}



\bs

For \ref{itm:property 3} we proceed to show the statement for Hamiltonians as follows.
First assume that $k\Osc(F-G) < w_{k,\al}(G).$ We will prove

\[|w_{k,\al}(G) -w_{k,\al}(F)| \leq k \cdot \Osc(F-G). \]

Put \[u(t)=\max_M(F(t,\cdot)-G(t,\cdot)),\; v(t) = \min_M(F(t,\cdot)-G(t,\cdot)),\] \[u= \Max(F-G) = \int_0^1 u(t)\,dt, v= \Min(F-G)=\int_0^1 v(t) \,dt,\]\[ \Delta = w_{k,\al}(G) - k \cdot \Osc(F-G).\] Note that $\Delta > 0$ by assumption.
Below we'll use $u-v = \Osc(F-G)$ and \[G(t,\cdot)+u(t) \geq F(t,\cdot) \geq G(t,\cdot)+v(t).\]

By Diagram \eqref{Diagram: rotation commutes with comparison} and Lemma \ref{Lemma: continuation representation maps} we obtain the following commutative diagram, where $j_\Delta= j_{I+ku,I+ku+\Delta},$ $C_+ = C(\Gk,\Fk),$ $C_- = C(\Fk,\Gk),$ $w=w_{k,\al}(G).$
\begin{equation*}\label{Diagram: Lipschitz proof}
  \displaystyle    \begin{array}{clclclc}
         HF^{I}(\Gk)_\al & \xrightarrow{C_+} & HF^{I+ku}(\Fk)_\al & \xrightarrow{j_{\Delta}} & HF^{I+ku+\Delta}(\Fk)_\al & \xrightarrow{C_-} & HF^{I+w}(\Gk)_\al\\
         \scriptstyle{\rS_k}{\downarrow} &   &  \scriptstyle{\rS_k}{\downarrow} &  & \scriptstyle{\rS_k}{\downarrow} &  & \scriptstyle{\rS_k}{\downarrow} \\
        HF^{I}(\Gk)_\al & \xrightarrow{C_+} & HF^{I+ku}(\Fk)_\al & \xrightarrow{j_{\Delta}} & HF^{I+ku+\Delta}(\Fk)_\al & \xrightarrow{C_-} & HF^{I+w}(\Gk)_\al\\
      \end{array}
   \end{equation*}


Note that by definition of $w_{k,\al}(G),$ decreasing the translation $\Delta>0$ to $\Delta-\delta >0$ by a small $\delta > 0$ if necessary, the composition $\rS_k \circ C(\Fk,\Gk) \circ j_{\Delta} \circ C(\Gk,\Fk)$ of the top row with the rightmost vertical arrow does not vanish. By the commutativity of the diagram, this implies that $C(\Fk,\Gk) \circ j_\Delta \circ \rS_k \circ C(\Gk,\Fk) \neq 0,$ and hence \[0 \neq j_\Delta \circ \rS_k: HF^{I+ku}(\Fk) \to HF^{I+ku+\Delta}(\Fk).\]

Consequently $w_{k,\al}(F) \geq \Delta.$ Hence by definition of $\Delta,$ $w_{k,\al}(G) - w_{k,\al}(F) \leq k \cdot \Osc(F-G).$

If $w_{k,\al}(G) \geq w_{k,\al}(F),$ this proves the statement. If not, then $w_{k,\al}(F) \geq w_{k,\al}(G) > k\Osc(F-G).$ Hence the above argument having $F$ and $G$ switch places, we get $w_{k,\al}(F) - w_{k,\al}(G) \leq k \cdot \Osc(F-G).$

Hence under the assumption that $k\Osc(F-G) \leq w_{k,\al}(G),$ we have
\[|w_{k,\al}(G) -w_{k,\al}(F)| \leq k \cdot \Osc(F-G).\]

It remains to remove the assumption from the statement. We argue as follows. By the above statement, if either $w_{k,\al}(G)> k \cdot \Osc(F-G)$ or $w_{k,\al}(F)> k \cdot \Osc(F-G)$ then $|w_{k,\al}(F) - w_{k,\al}(G)| \leq k \cdot \Osc(F-G).$

In the remaining case we have both $w_{k,\al}(F) \leq k \cdot \Osc(F-G)$ and $w_{k,\al}(G) \leq k \cdot \Osc(F-G)$. Assume without loss of generality that $w_{k,\al}(F) \geq w_{k,\al}(G).$ Then since $w_{k,\al}(G) \geq 0,$

\[0 \leq w_{k,\al}(F) - w_{k,\al}(G) \leq w_{k,\al}(F) \leq k \cdot \Osc(F-G).\]

So the required inequality holds.
\bs

For \ref{itm:property 4} consider an autonomous Hamiltonian diffeomorphism $\phi$ generated by an autonomous Hamiltonian $h \in \sm{M,\R}_0.$ Consider a non-degenerate Hofer $\epsilon$-small perturbation of $\phi$ generated by time-periodic Hamiltonian $H \in \cH$ with $\Osc(H-h) < \epsilon.$ We use Definition \ref{Definition: Rotation k} of the loop-rotation operator. Note that the family of Hamiltonians $\{\Hk_r\}_{r\in [0,1]}$ satisfies $\Osc(\Hk_r-kh) < k\cdot \epsilon,$ for all $r \in [0,1].$ Since $(\rR_{r,k}^{-1})^*\A_{\Hk} = \A_{\Hk_r},$ this means that for all $x \in \cL_\al M,$ \[(\rR_{r,k}^{-1})^*\A_{\Hk} - \A_{\Hk} < 2k\cdot \epsilon.\]

\begin{rmk}
At this point a short intuitive remark is in order. Given a Morse function on a closed manifold $X,$ and an isotopy $\{\psi_r\}_{r \in [0,1]}$ with endpoint $\psi_1=\psi$ of $X,$ such that $(\psi_r)^* f-f < \delta$ for all $r \in [0,1]$ and some $\delta >0$ we can see that for a relative singular cycle $c$ in the singular chain complex of the pair $(\{f < b\}, \{f \leq a\})$ the image of the cycle $\psi_*(c)-c$ in the singular chain complex of the pair $(\{f < b+\delta\}, \{f \leq a + \delta\})$ is a boundary, by considering the trace of $c$ under the isotopy $\{\psi_r\}.$

We add that while in this proof we perturb to achieve regularity, we may have considered the $r2p$ persistence module of the degenerate Hamiltonian $h$ itself, and then relied on the above intuitive picture with $\delta=0.$
\end{rmk}

We construct a null-homotopy $\cN: CF(\Hk,J)_\al \to CF(\Hk,J)[1]_\al$ of \[\rS'_k=C_{1}\circ \rR_k - C_0\] on $CF(\Hk,J)_\al,$ for a generic almost complex structure $J \in \cJ,$ where \[C_0: CF(\Hk,J)_\al \to CF(\Hk,J)_\al\] is a self-continuation map, and \[C_1: CF(\Hk,(\rR_k)_*J)_\al \to CF(\Hk,J)_\al\] is a continuation map. The matrix element $\langle \cN(x),y\rangle$ between two generators $x,y$ of indices $i,i+1$ is defined by counting solutions to the following parametrized Floer equation, with the boundary conditions  $\{y_r = y\}_{r \in [0,1]}$ and $\{ x_r = \rR_{r,k}(x) \in \cP^\al(\Hk_r)\}_{r \in [0,1]}$ corresponding to $x$ and $y.$

%
%
%
%
%
%
%
%



For $r \in [0,1],$ choose an interpolation $\widehat{H}^{(k)}_s,$ $s \in \R$ between the function $\widehat{H}_0^{(k)}(r,t,x) =\Hk(t,x)$ and the function $\widehat{H}_1^{(k)}(s,t,x) =\Hk_r(t,x).$ For a fixed $r,s$ we obtain a Hamiltonian $\widehat{H}^{(k)}_{r,s} \in \cH.$

Choose a generic family of almost complex structures $$\{ J(r,s) \in \cJ \}_{(r,s) \in [0,1] \times \R}$$ such that $J(r,s,t) \equiv (\rR_{r,k})_* J(t)$ for $s \ll 0$ and $J(r,s,t) \equiv J(t)$ for $s \gg 0.$ Then we count solutions $(r,u_r)$ of the parametric Floer equation \[ \del_s u_r + J(r,s,t,u_r)(\del_t u_r - X_{\widehat{H}^{(k)}_{r,s}}(t,u_r)) = 0,\] with boundary conditions $u_r(s,t) \xrightarrow{s \to -\infty} x_r$ and $u_r(s,t) \xrightarrow {s \to +\infty} y_r.$

Considering the breaking of solutions in one-parameter families we see that $\cN$ is a null-homotopy of $\rS'_k,$ that is

\[\rS'_k=C_{1}\circ \rR_k - C_0= \del \cN - \cN \del.\]

We analyze the effect of this operator on the action filtration. The usual action estimates in Floer theory show that this operator does not raise the action filtration more than \[\max_{r \in [0,1]} \Osc(\Hk - \Hk_r) \leq 2k \cdot \epsilon,\] since $0  \leq \Osc(\Hk - \Hk_r) = \Osc(\Hk -k\cdot h + k\cdot h - \Hk_r) \leq \Osc(\Hk -k\cdot h) + \Osc(k\cdot h - \Hk_r) \leq k\cdot \eps + k \cdot \eps.$

Hence for the perturbation $H \in \cH$ of $h$ we have $w_{k,\al}(H) \leq 2k\cdot \eps.$ By property \ref{itm:property 3}, taking $\eps \to 0,$ we obtain $w_{k,\al}(\phi) = 0.$

\bs
For \ref{itm:property 6} consider the diffeomorphism of loop spaces \[\rP(\psi): \cL_\al M \to \cL_{\psi \cdot \al} M,\] \[z(t) \mapsto \psi(z(t)).\]

This diffeomorphism satisfies \[\rP(\psi)^* \A_{\Hk \circ \psi^{-1}} = \A_{\Hk} - I(\psi,\alpha),\]

for a constant $I(\psi,\alpha)$ equal to the (well-defined) symplectic area of a cylinder between $\eta_{\psi\cdot \alpha}$ and $\psi\circ \eta_\al$ (recall that $\eta_\al \in \cL_\al M$ is a fixed reference loop). This means that $\rP(\psi)$ defines an isomorphism \[[\rP(\psi)]: HF^{(a,b)}(\phi^k)_\al \to HF^{(a,b)+I(\psi,\al)}(\psi \phi^k \psi^{-1})_{\psi\cdot \al}\]

between the $r2p$ persistence module of $\phi^k$ in class $\al,$ and the $r2p$ persistence module of $\psi \phi^k \psi^{-1}$ in class $\psi \cdot \al$ shifted by $I(\psi,\al).$ Since the invariant is independent on shifts of $r2p$ persistence modules, it remains to see that $[\rP(\psi)]$ commutes with $\rT_k,$ which immediately follows from the fact that the diffeomorphisms $\rP(\psi): \cL M \to \cL M,$ covering $\al \to \psi\cdot \al$ on $\pi_0$ and $\rR_k: \cL M \to \cL M$ covering $\id$ on $\pi_0,$ commute.

\bs
For \ref{itm:property 7} we use the following easy lemma.

\begin{lma}\label{Lemma: 4 intervals}
Let $I_1,I_2,I_3,I_4,$ with $I_q =(a_q,b_q),\; 1 \leq q \leq 4$ be intervals such that $a_1 \leq a_2 \leq a_3 \leq a_4$ and $b_1 \leq b_2 \leq b_3 \leq b_4.$ If $j_{(a_1,b_1),(a_4,b_4)} \circ \rS_k \neq 0$ then $j_{(a_2,b_2),(a_3,b_3)} \circ \rS_k \neq 0.$
\end{lma}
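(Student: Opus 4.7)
The plan is to prove this by a simple functoriality/factorization argument: factor the long comparison map as a triple composition through the intermediate intervals $I_2$ and $I_3$, and then contradict the nonvanishing of the long composition if the middle one vanishes.

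More precisely, I would first observe that the hypothesis $a_1\leq a_2\leq a_3\leq a_4$ and $b_1\leq b_2\leq b_3\leq b_4$ gives $I_1\leq I_2\leq I_3\leq I_4$ in the partial order defining the category $\mathcal{I}^S$. Hence by functoriality of the comparison maps (the condition $j_{I_2,I_3}\circ j_{I_1,I_2}=j_{I_1,I_3}$ in Definition \ref{Definition: r2p persistence module}), we have
\[
j_{I_1,I_4} \;=\; j_{I_3,I_4}\circ j_{I_2,I_3}\circ j_{I_1,I_2}.
\]
I would then use the fact that $\rS_k=\rT_k-\id$ commutes with comparison maps; this is precisely the content of Diagram \eqref{Diagram: rotation commutes with comparison} in its general form (the naturality of $\rT_k$ as a morphism of $r2p$ persistence modules, which was already asserted above the diagram). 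Concretely, $\rS_k$ is a natural transformation of the functor $V:\mathcal{I}^S\to Vect_{\mathbb{K}}$ to itself.

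Combining these two facts,
\[
j_{I_1,I_4}\circ \rS_k \;=\; j_{I_3,I_4}\circ\bigl(j_{I_2,I_3}\circ \rS_k\bigr)\circ j_{I_1,I_2}.
\]
If the middle factor $j_{I_2,I_3}\circ \rS_k$ were zero, the entire composition would vanish, contradicting the hypothesis $j_{I_1,I_4}\circ \rS_k\neq 0$. Therefore $j_{I_2,I_3}\circ \rS_k\neq 0$, as required.

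There is no real obstacle here; the lemma is a purely formal consequence of functoriality of the comparison maps and naturality of $\rT_k$ (equivalently $\rS_k$) with respect to them. The only point to be careful about is to confirm that the four intervals do lie in $\mathcal{I}^S$ so that all the comparison maps used are defined — but in applications of the lemma one either restricts to intervals with endpoints outside $S$ or invokes the extension given by Definition \ref{Definition: Floer homology for degenerate paths}.
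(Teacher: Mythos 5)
Your proof is correct and is essentially identical to the paper's: the paper records exactly the identity $j_{I_1,I_4}\circ \rS_k = j_{I_3,I_4}\circ j_{I_2,I_3}\circ j_{I_1,I_2}\circ \rS_k = j_{I_3,I_4}\circ(j_{I_2,I_3}\circ \rS_k)\circ j_{I_1,I_2}$, using functoriality of the comparison maps and the fact that $\rS_k$ commutes with them, and concludes by contraposition just as you do.
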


\begin{proof}
The lemma follows immediately from the identity \[j_{I_1,I_4} \circ \rS_k= j_{I_3,I_4} \circ j_{I_2,I_3} \circ j_{I_1,I_2} \circ \rS_k = j_{I_3,I_4} \circ (j_{I_2,I_3} \circ \rS_k) \circ j_{I_1,I_2}.\]
\end{proof}

Put $w=w_{k,\al}(\phi),$ and $w'=w_{k,\al \times pt_N}(\phi \times \id_N).$ We note that it is enough to show that

\begin{enumerate}
\item \label{proof of itm:property 7 direction 1} if $w > 0$ then $w' \geq w,$ and
\item \label{proof of itm:property 7 direction 2} if $w' > 0$ then $w \geq w'.$
\end{enumerate}
Indeed, this would imply equality even in the case when one of the values $w,w'$ vanishes.

In the proof of both directions, we note that by property \ref{itm:property 3} we can assume that $\phi^k$ is generated by a non-degenerate Hamiltonian $\Hk \in \cH$ for the class $\alpha,$ with minimal gap $\eps >0$ in the spectrum $Spec_{k,\al}(H):=\A_{\Hk}(\cP^\al(\Hk)).$ 

Then we replace $\id_N$ by the time-one map $\phi^1_f$ of a Morse function $f,$ considered as an autonomous Hamiltonian, such that $kf$ is $C^2$-small for the purposes of Floer theory, and satisfies the estimate $|kf|_{C^0} < \delta < \frac{1}{10} \eps.$ Denote by $Spec_{k,\al}(H)+(-\delta,\delta)$ the $\delta$-neighbourhood of $Spec_{k,\al}(H)$ in $\R.$

By the Kunneth theorem in Hamiltonian Floer homology combined with the calculation of Hamiltonian Floer homology for $C^2$-small Morse Hamiltonians, and the observation that it is clearly natural with respect to the operator $\rT_k,$ we see that for each window $(a,b)$ with \[a,b \notin Spec_{k,\al}(H)+(-\delta,\delta)\] we have the isomorphism \begin{equation} \label{Equation: Kunneth stabilization} HF^{(a,b)}(\Hk+kf)_{\al \times pt_N} \cong HF^{(a,b)}(\Hk)_\al \otimes HM(kf),\end{equation}

where $HM(kf)$ denotes the Morse homology of $kf,$ which is isomorphic to the singular homology of $N,$ and the operator $\rT_k$ acts as \[\rT_k^{\Hk+kf} \cong \rT_k^{\Hk} \otimes \id_{HM(kf)},\] with respect to the isomorphism  (\ref{Equation: Kunneth stabilization}) above, where for a normalized $1$-periodic Hamiltonian $G,$ \[\rT_k^{\Gk}: HF^{(a,b)}(\Gk)_\al \to HF^{(a,b)}(\Gk)_\al\] is the loop rotation operator.


To prove direction \ref{proof of itm:property 7 direction 1} assume in addition that $\delta < \frac{1}{10} w,$ and choose, by definition of $w,$ a number $d > w - \delta $ and a window $(a,b)$ such that \[j_{(a,b),(a+d,b+d)}^{\Hk} \circ \rS_k^{\Hk}\neq 0.\]
By having chosen $\delta$ sufficiently small, there exist windows $(a_1,b_1)$ and $(a_4,b_4)$ such that \[a \leq a_1 < a + 2\delta,\]\[ b \leq b_1<b + 2\delta,\] \[a+d - 2\delta < a_4 \leq a + d,\]\[ b+d - 2\delta < b_4 \leq b +d,\] and \[a_1,b_1,a_4,b_4 \notin Spec_{k,\al}(H)+(-\delta,\delta).\] In particular we have \[j_{(a_1,b_1),(a_4,b_4)}^{\Hk} \circ \rS_k^{\Hk} \neq 0\] by Lemma \ref{Lemma: 4 intervals}. Moreover, since $a_1,b_1,a_4,b_4 \notin Spec_{k,\al}(H)+(-\delta,\delta),$ we conclude that \[j_{(a_1,b_1),(a_4,b_4)}^{\Hk+kf} \circ \rS_k^{\Hk+kf} = (j_{(a_1,b_1),(a_4,b_4)}^{\Hk} \circ \rS_k^{\Hk}) \otimes \id_{HM(kf)} \neq 0.\] It is easy to see that \[a_3 := a_1 + d - 4 \delta,\] \[b_3 := b_1 + d - 4\delta\] satisfy \[a_1<a_3<a_4\] \[b_1<b_3<b_4,\] and hence applying Lemma \ref{Lemma: 4 intervals} with \[(a_2,b_2) = (a_1,b_1)\] we have \[j_{(a_2,b_2),(a_3,b_3)}^{\Hk+kf} \circ \rS_k^{\Hk+kf} \neq 0\] and hence \[w_{k,\al \times pt_N}(\phi \times \phi^1_f)\geq w_{k,\al}(\phi)-4\delta.\] Since by property \ref{itm:property 3} \[w_{k,\al \times pt_N}(\phi \times \id_N) \geq w_{k,\al \times pt_N}(\phi \times \phi^1_f) - 2\delta,\] we obtain \[w_{k,\al \times pt_N}(\phi \times \id_N) \geq  w_{k,\al}(\phi) - 6 \delta.\]
Since this inequality holds for all sufficiently small $\delta,$ this finishes the proof of direction \ref{proof of itm:property 7 direction 1}.
The proof of direction \ref{proof of itm:property 7 direction 2} is very similar to that of direction \ref{proof of itm:property 7 direction 1} and hence we omit its details.

\bs
This finishes the proof of the proposition.
\end{proof}

\begin{proof}\label{Proof: itm:property 5}(Proposition \ref{itm:property 5})

Consider a generator $x$ of index $i$ of the Floer complex $CF(\Hk)_\al.$ Put $\A$ for its critical value. By assumption on the action difference, we see that $x$ defines a non-trivial cohomology class in $HF^{(\A-Q, \A+R)}(\Hk)_\al,$ where $0 < Q, R < D.$ So do the generators $(\phi_H)^e(x)$ for $0 \leq e \leq k-1,$ which are all different since $\al$ is primitive. Moreover, these classes persist under comparison maps $HF^{(\A-Q, \A+R)}(\Hk)_\al \to HF^{(\A-Q', \A+R')}(\Hk)_\al,$ $0 < Q, Q, R, R' < D, \; Q\geq Q', R \leq R'$ between Floer homology groups of different windows of this type. By  \eqref{Definition:Push k} and Proposition \ref{Lemma: two descriptions}, we see that $\rS_k$ is non zero on $HF^{(\A-Q, \A+R)}(\Hk)_\al.$ By Diagram \ref{Diagram: rotation commutes with comparison} with $(a,b)=(\A-D+\eps,\A+\eps),$ and $d = D-2\eps,$ for a small $\eps > 0,$ and naturality of the comparison maps, we see that $j_{D-2\eps}\circ \rS_k: HF^{(\A-D+\eps,\A+\eps)}(\Hk)_\al \to HF^{(\A-\eps,\A + D-\eps)}(\Hk)_\al$ does not vanish. Hence $w_{k,\al}(H) \geq D - 2 \eps,$ for each small $\eps>0,$ and therefore $w_{k,\al}(H) \geq D.$

\end{proof}

\begin{proof}\label{Proof: Proposition Conley}(Proposition \ref{Propsotion: Conley})

Consider the $C^\infty$-dense subset of $\Ham(M)$ consisting of Hamiltonian diffeomorphisms $\phi$ such that for all $k,$ the iteration $\phi^k$ is non-degenerate - that is its graph intersects the diagonal transversely (in fact, since we are working in the class $pt_M \in \pi_0(\cL M)$ of contractible loops, it is sufficient to require only that the intersections corresponding to contractible orbits be transverse). These diffeomorphisms are called {\em strongly non-degenerate}. For each such diffeomorphism $\phi$ of a symplectically aspherical manifold the Conley conjecture holds (cf. \cite{GG,Ginzburg,SalamonZehnder92}), which implies in particular that given $\phi$ there is an infinite subset $\mathrm{P}_\phi \subset \Z_{>0}$ of prime numbers such that for all $p \in \mathrm{P}_\phi$ the diffeomorphism $\phi^p$ has a contractible fixed point $x$ which is not a fixed point of $\phi,$ or in other words $\phi$ has a contractible primitive $p$-periodic orbit $x$. Pick any such $p$ and $x,$ and let $\A$ denote its action $\A_{H^{(p)}}(x).$ Then for an $\epsilon > 0$ that is smaller than the minimal action gap in $Spec_\al(H^{(p)}),$ which is a finite set, $[x],[\phi(x)],...,[\phi^{p-1}x]$ define a base of cycles for a subspace $V \subset HF^{(\A-\eps,\A+\eps)}.$ Moreover, this subspace $V$ is invariant with respect to the action of $\rT_p,$ which acts on $V$ by $\rT_p([\phi^j(x)]) = [\phi^{j+1}(x)].$ Clearly this persists for all windows $(\A-\eps_1,\A+\eps_2)$ where $0 < \eps_1,\eps_2 \leq \eps,$ and hence $w_{p,pt_M}(\phi) \geq \eps > 0.$
\end{proof}

\subsection{A primer on persistence modules}\label{Subsection: enriched persistence modules}

We start by noting that there is a weaker version $\widehat{w}_{k,\al} \leq w_{k,\al}$ of the $\Z_k$ spectral spread where all intervals are unbonded from below - that is when $a=-\infty$ everywhere in the definition. This version still satisfies Properties \ref{itm:property 1}-\ref{itm:property 7} of Proposition \ref{Proposition: properties of the invariant}. Moreover, it can be reformulated in the language of {\em barcodes of one-parametric persistence modules} (see Section \ref{Subsection: spread and persistence modules}) which captures additional information about the $\Z_k$-action of $\rT_k$ on filtered Floer homology, which we subsequently use to prove Theorem \ref{thm-main-squares}. In this section we collect preliminaries
on persistence modules, see \cite{BauerLesnick,CarlssonTopologyData,CarlssonZomorodianComputing1d,CarlssonEtAlBarcodesShapes,Crawley,Ghrist}.
Let us mention that by default, a persistence module is assumed to be one-parametric, and that
we work in the simplest setting suitable for our purposes.

\subsubsection*{Persistence modules}

Let $\K$ be a field. A (one-parametric) {\it persistence module} is given by
a pair $(V,\pi),$
where
\begin{itemize}
\item[{(i)}] $V_t$, $t \in \R$ are finite dimensional $\K$-vector spaces;
\item[{(ii)}]\label{item: compactly supported persistence modules} {\bf (compact support)} $V_s=0$ for all $|s|$ sufficiently large;
\item[{(iii)}]{\bf (persistence)} $\pi_{st}: V_s \to V_t$, $s < t$ are morphisms satisfying $\pi_{sr} = \pi_{tr}\circ\pi_{st}$ for all $s,t,r \in \R$.
\item[{(iv)}] {\bf (semicontinuity)} For every $r \in \R$ there exists $\epsilon>0$ such that
$\pi_{st}$ is an isomorphism for all $r-\epsilon < s < t \leq r$;
\item[{(v)}] {\bf (finite spectrum)} For all but a finite number of points $r \in \R$, there exists a neighborhood $U$ of $r$ such that $\pi_{st}$ is an isomorphism for all $s <t$ with $s,t \in U$.
    The exceptional points form {\it the spectrum} of the persistence module.
\end{itemize}
For the sake of brevity, we often denote the persistence module $(V,\pi)$ or simply $V$ and
write $\cS(V)$ for its spectrum.

It is instructive to mention that for every pair $a < b$ of consecutive points of the spectrum,
the morphism $\pi_{st}$ is an isomorphism for all $s,t $ with $a< s< t \leq b$.
This readily follows from axioms (iii) and (iv) and the compactness of $[s,t]$.

\begin{example}
Filtered Floer homology of an $\al$-atoroidal symplectically aspherical symplectic manifold for a non-degenerate Hamiltonian diffeomorphism as defined in Section \ref{Section:HF} gives an example of a persistence module. We refer to it as the {\em Floer persistence module}.
\end{example}

\subsubsection*{Morphisms} A morphism $A$ between persistence modules $(V,\pi)$ and
$(V',\pi')$ is a family of linear maps $A_t: V_t \to V'_t$
which respect the persistence morphisms:
$$A_t\pi_{st} =\pi'_{st}A_s\;$$
for all $s<t$.

Persistence modules and their morphisms form a category. Thus we can speak about isomorphic
persistence modules. One readily checks that isomorphic modules have the same spectra.

\begin{example}\label{Example: morphism via continuation map}
The continuation map $HF^{(-\infty,a)}(F)_\al \to HF^{(-\infty,a+\Max(G-F))}(G)_\al$ between Floer homologies of two non-degenerate Hamiltonians $F,G$ gives a morphism between the corresponding persistence modules. Note that the second persistence module is shifted by $\Max(G-F).$
\end{example}

\subsubsection*{The structure theorem } Let us formulate the main structure theorem for (compactly supported semi-continuous) persistence modules, see e.g. \cite{Crawley}.

Given two  persistence modules $(V,\pi)$ and
$(V',\pi')$, we define their direct sum as
$$(\{V_t \oplus V'_t\},\{\pi_{st} \oplus \pi'_{st}\})\;.$$

Let $I=(a,b]$ be an interval, where $a,b \in \R$.
Introduce a persistence module  $Q(I) = (\{Q(I)_t\}, \theta)$ given by $(Q(I))_t = \K$ for $t \in I$ and $(Q(I))_t = 0$ otherwise, while the morphisms $\theta$ are the identity maps within $I$ and zeroes otherwise.

\begin{thm}[The structure theorem for persistence modules]\label{Theorem: structure of persistence modules}

For every persistence module $V$ there exists a unique collection of pair-wise distinct intervals $I_j = (a_j,b_j]$, $a_j,b_j \in \cS(V)$, $j=1,...,N$
and multiplicities $m_1,...,m_N \in \N$ so that
\begin{equation}\label{eq-structure}
V \cong \oplus_{j=1}^N (Q(I_j))^{m_j}\;,
\end{equation}
where  $$(Q(I_j))^{m_j}= Q(I_j) \oplus \dots \oplus Q(I_j)$$ $m_j$ times.
\end{thm}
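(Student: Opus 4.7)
The plan is to reduce the theorem to the classification of representations of a finite type-$A$ quiver, and then invoke Gabriel's theorem together with the Krull--Schmidt uniqueness.

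First, I would discretize. The finite-spectrum axiom furnishes points $c_1 < \cdots < c_n$ such that $\pi_{st}$ is an isomorphism whenever $s<t$ both lie in a single open interval $(c_i, c_{i+1})$, with the convention $c_0 = -\infty$ and $c_{n+1} = +\infty$. Picking any $t_i \in (c_i, c_{i+1})$ and setting $W_i := V_{t_i}$ produces finite-dimensional spaces, well-defined up to canonical isomorphism by persistence, together with maps $g_i : W_{i-1} \to W_i$ induced by the $\pi_{st}$. Compact support gives $W_0 = W_n = 0$, and left-semicontinuity identifies $V_{c_i}$ with $W_{i-1}$, so the entire persistence module is recovered from the finite diagram $0 = W_0 \xrightarrow{g_1} W_1 \to \cdots \to W_{n-1} \xrightarrow{g_n} W_n = 0$. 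This is a representation of the linearly oriented $A_{n-1}$-quiver, and under this equivalence the building block $Q((c_i, c_{j+1}])$ corresponds to the interval representation $R[i,j]$ having $\K$ at vertices $i,\ldots,j$ and identities in between.

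Next, I would invoke Gabriel's theorem for $A_{n-1}$: every finite-dimensional representation is a finite direct sum of interval representations $R[i,j]$, and since each such representation has local endomorphism ring $\K$, the Krull--Schmidt theorem yields uniqueness of the multiplicities. For existence, one argues by induction on $n$: identify the largest $k$ with $W_k \neq 0$, choose a complement of $\operatorname{im}(g_k)$ inside $W_k$ and iteratively lift basis vectors backward through the $g_i$ as far as they can go while respecting the kernels, thereby splitting off direct summands of the form $R[i,k]$; what remains is supported on the sub-quiver with vertices $1,\ldots,k-1$, and one recurses. Translating summands back via the equivalence produces the desired decomposition $V \cong \bigoplus_j Q(I_j)^{m_j}$ with intervals $I_j$ whose endpoints lie in the discretization $\{c_1,\ldots,c_n\} = \cS(V)$.

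The main obstacle will be the existence half of the decomposition of $A_{n-1}$-representations. Although entirely standard, the inductive splitting requires careful coordination so that the lifted basis vectors remain compatible both with the kernels of subsequent maps and with the images of preceding ones; this is where Fitting-type arguments naturally enter. A secondary, essentially bookkeeping issue is matching the half-open convention $(a,b]$ of the bars to the left-semicontinuity axiom, so that the endpoints of bars truly land in $\cS(V)$ rather than being displaced by an infinitesimal. Once these points are handled, uniqueness follows formally from Krull--Schmidt since each $R[i,j]$ has local endomorphism ring $\K$.
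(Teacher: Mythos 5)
The paper does not prove this theorem itself; it simply cites Crawley--Boevey \cite{Crawley} and moves on, so there is no in-paper argument to compare against step by step. Your argument is correct and takes a genuinely different (and, in this setting, more elementary) route: you use the finite-spectrum axiom together with compact support and left semicontinuity to reduce the persistence module to a finite-dimensional representation of a linearly oriented $A_{n-1}$ quiver, and then invoke Gabriel's theorem plus Krull--Schmidt. The bookkeeping you worry about does work out: with spectrum $c_1<\cdots<c_n$ and sample points $t_k\in(c_k,c_{k+1})$, the building block $Q((c_i,c_{j+1}])$ corresponds exactly to the interval representation supported on vertices $i,\dots,j$; left semicontinuity places each $V_{c_i}$ at vertex $i-1$; and compact support forces $W_0=W_n=0$, which rules out any summand $R[i,j]$ whose corresponding bar would have an endpoint outside $\cS(V)$. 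Each $R[i,j]$ has endomorphism ring $\K$, so Krull--Schmidt gives uniqueness. What this buys is a short, self-contained proof tailored to the paper's finiteness hypotheses; what the cited Crawley--Boevey result buys is generality --- it applies to arbitrary pointwise finite-dimensional persistence modules over a totally ordered set with no finite-spectrum assumption, where no reduction to a finite quiver is available and the decomposition must be extracted by a more delicate direct argument. One minor redundancy in your write-up: you both invoke Gabriel's theorem and then re-sketch its existence half by induction; either suffices, and the inductive ``lift basis vectors backward'' step is exactly where the care you flag is needed (naive lifts can fail to remain independent modulo the relevant kernels), so it is cleanest to simply cite Gabriel or the Fitting-lemma variant.
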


\medskip
\noindent
The collection of intervals $\{I_j,m_j\}$ with multiplicities appearing
in the normal form of the persistence module $V$ is called the barcode of $V$ and is denoted
by $\cB(V)$.

\subsubsection*{Interleaving distance} For a persistence module $V=(\{V_t\},\{\pi_{st}\})$
and $\delta \in \R$ denote by $V^\delta$ the shifted module $$V^\delta=(\{V_{t+\delta}\},\{\pi_{s+\delta,t+\delta}\})\;.$$ Note that $\cS(V^\delta) = \cS(V)+\delta.$ Observe that for $\delta \in \R_{>0},$ the persistence modules $V$ and $V^\delta$ are related by a canonical shift morphism $\phi_V(\delta)$ given by $\pi_{t,t+\delta}: V_t \to V_{t+\delta}$.

For a morphism $f: V \to W$ write $f(\delta)$ for the induced morphism $V^\delta \to W^\delta$.

We say that two persistence modules $V$ and $W$ are $\delta$-interleaved ($\delta >0$) if there exists
morphisms $f: V \to W^\delta$ and  $g: W \to V^\delta$ so that
$$f(\delta)g= \phi_W(2\delta), \;g(\delta)f= \phi_V(2\delta)\;.$$

The interleaving distance $d_{inter}$ between two isomorphism classes of persistence
modules equals the infimum of $\delta$ such that these modules are $\delta$-interleaved.

Observe that since any $V$ is compactly supported, $\phi_V(\delta)=0$ for all sufficiently large $\delta$,
so the interleaving distance between any two modules is finite: take such a $\delta$ and $f=g=0$.

\subsubsection*{Bottleneck distance between barcodes}
Recall that a {\it matching} between finite sets $X$ and $Y$ is a
bijection $\mu: X' \to Y',$  where $X' \subset X$ and $Y' \subset Y$.
We denote $X'= \text{coim}(\mu)$ and $Y'=\text{im}(\mu)$.

Given a barcode $\cB=\{I_i,m_i\}$, $i=1,...,M$, consider a  set
$$\langle \cB \rangle:= \{I_i^{(k_i)},\; k_i=1,...,m_i,\;\;i=1,...,M\}\;.$$
Let $\langle \cB \rangle_\epsilon \subset \langle \cB \rangle$ be a subset consisting
of all intervals of length $> \epsilon$.

For an interval $I=(a,b]$ put $I^{-\delta} = (a-\delta,b+\delta]$.

For $\delta >0$, define a $\delta$-matching between barcodes $\cB$ and $\cC$
as a matching $\mu$ between $\langle \cB \rangle$ and $\langle \cC \rangle$ such that
\begin{itemize}
\item $\langle \cB \rangle_{2\delta} \subset \text{coim}(\mu)$;
\item $\langle \cC \rangle_{2\delta} \subset \text{im}(\mu)$;
\item $\mu(I) = J \Longrightarrow I \subset J^{-\delta}, J \subset I^{-\delta}$.
\end{itemize}
The bottleneck distance $d_{bottle}(\cB,\cC)$ is defined as the infimum of $\delta$ such that the barcodes
$\cB$ and $\cC$ admit a $\delta$-matching.

\subsubsection*{Isometry theorem}

The interleaving distance between two persistence modules and the bottleneck distance between their barcodes are related by the Isometry Theorem - cf. Bauer-Lesnick \cite{BauerLesnick} and references therein.


\begin{thm}[The Isometry Theorem for persistence modules and barcodes]\label{Theorem: Isometry theorem}
$d_{inter}(V,W) = d_{bottle}(\cB(V),\cB(W)).$
\end{thm}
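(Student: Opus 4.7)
The plan is to prove the two inequalities $d_{bottle}(\cB(V),\cB(W)) \leq d_{inter}(V,W)$ and $d_{inter}(V,W) \leq d_{bottle}(\cB(V),\cB(W))$ separately. Both rely crucially on the structure theorem (Theorem \ref{Theorem: structure of persistence modules}), which reduces everything to the interaction between the elementary building blocks $Q(I)$.

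For the easier direction $d_{inter}(V,W) \leq d_{bottle}(\cB(V),\cB(W))$, I would fix a $\delta$-matching $\mu$ between $\cB(V)$ and $\cB(W)$ and construct an interleaving summand by summand. Writing $V \cong \bigoplus_{I \in \langle \cB(V) \rangle} Q(I)$ and $W \cong \bigoplus_{J \in \langle \cB(W) \rangle} Q(J)$, for each matched pair with $\mu(I)=J$ the conditions $I \subset J^{-\delta}$ and $J \subset I^{-\delta}$ make it a direct check that $Q(I)$ and $Q(J)$ admit a $\delta$-interleaving using the obvious identity maps of $\K$ on the overlap of shifted supports. For each unmatched bar $I$, by definition its length is $\leq 2\delta$, which forces $\phi_{Q(I)}(2\delta)=0$, so $Q(I)$ is $\delta$-interleaved with the zero module via $f=g=0$. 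Taking direct sums of these component interleavings produces the required morphisms $f:V \to W^\delta$ and $g:W \to V^\delta$.

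For the harder direction $d_{bottle}(\cB(V),\cB(W)) \leq d_{inter}(V,W)$, given a $\delta$-interleaving $(f,g)$ I would construct the induced matching of Bauer--Lesnick. The idea is to track how $f$ transports the indecomposable summands of $V$ into $W^\delta$: for each bar $I=(a,b] \in \cB(V)$ of length greater than $2\delta$, the image under $f$ of the corresponding generator determines a unique bar $J \in \cB(W)$ with $J \subset I^{-\delta}$ and $I \subset J^{-\delta}$, and setting $\mu(I)=J$ defines the matching. Bars of length $\leq 2\delta$ are left unmatched. The interleaving identities $f(\delta)g=\phi_W(2\delta)$ and $g(\delta)f=\phi_V(2\delta)$ guarantee that $\mu$ is a bijection from $\langle \cB(V) \rangle_{2\delta}$ onto a set containing $\langle \cB(W) \rangle_{2\delta}$ (symmetrically using $g$), and that the shift conditions are satisfied. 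At a more intrinsic level, the construction is most cleanly stated via the rank invariants $r_V(s,t)=\dim \mathrm{im}(\pi_{st})$: these determine $\cB(V)$ by an inclusion--exclusion formula, and the interleaving yields inequalities $r_V(s+\delta,t-\delta) \leq r_W(s,t) \leq r_V(s-\delta,t+\delta)$, from which the bar-counting functions of $\cB(V)$ and $\cB(W)$ can be matched within tolerance $\delta$.

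The main obstacle is this harder direction. The subtlety is that the interleaving maps $f$ and $g$ need not respect the decompositions of $V$ and $W$ into indecomposable summands, so one cannot simply read off a matching from $f$. The argument must be made either via the intrinsic rank-function inequalities above, together with a combinatorial lemma (often a Hall-type marriage argument) showing that the rank inequalities imply the existence of a matching with the stated properties, or via an explicit inductive construction processing bars in order of decreasing length and using the interleaving to peel off matched pairs while controlling the residual modules. Either route requires careful bookkeeping, but the input from the structure theorem and the axioms of a $\delta$-interleaving is sufficient; this is the content of the isometry theorem cited from \cite{BauerLesnick}.
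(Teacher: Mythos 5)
The paper does not prove this theorem; it is quoted as a black box from the literature with a pointer to Bauer--Lesnick \cite{BauerLesnick}, so there is no in-paper proof to compare against. Your sketch is a reasonable overview of the standard argument: the easy direction (converse stability, $d_{inter}\le d_{bottle}$) is correctly reduced via the structure theorem to interleaving the blocks $Q(I)$ pairwise and killing the short unmatched bars against the zero module, and you correctly identify the genuine difficulty in the hard direction (algebraic stability, $d_{bottle}\le d_{inter}$), namely that the interleaving maps $f,g$ do not respect the indecomposable decompositions, and you name the two standard routes around it (Bauer--Lesnick's induced matching, or rank-invariant inequalities plus a Hall-type combinatorial lemma / interpolation).

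One concrete error: your rank inequalities are reversed. A $\delta$-interleaving gives the factorization $\pi^V_{s-\delta,\,t+\delta}=g_t\circ\pi^W_{s,t}\circ f_{s-\delta}$, hence $r_V(s-\delta,t+\delta)\le r_W(s,t)$, and symmetrically $r_W(s-\delta,t+\delta)\le r_V(s,t)$; reindexing the latter gives
\[
r_V(s-\delta,\,t+\delta)\ \le\ r_W(s,t)\ \le\ r_V(s+\delta,\,t-\delta),
\]
which is the opposite of what you wrote (intuitively, shrinking the test window can only increase the number of bars straddling it). This is the kind of sign slip that would derail the Hall-type bookkeeping if carried through, so it is worth fixing even though the overall strategy is sound. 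Also, the phrase ``the image under $f$ of the corresponding generator determines a unique bar $J$'' is not literally correct (as you yourself acknowledge two sentences later); the induced matching of Bauer--Lesnick is precisely the device that replaces this naive assignment.
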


\subsubsection*{Multiplicity function}
Let $\cB$ be a barcode (recall that in this section all barcodes consist of finite intervals
only since they correspond to compactly supported persistence modules).

For an interval $I \subset \R$ denote the by $m(\cB,I)$ the number of bars in $\cB$ (with multiplicities!) containing $I$. For an interval $I=(a,b]$ of length $> 2c$ put $I^c=(a+c,b-c]$.

\medskip
\noindent
\begin{prop}\label{prop-dbottle}
Assume that $d_{bottle}(\cB,\cC) < c$ and for an interval $I$ of length $> 4c$
\begin{equation}\label{eq-bottle}
m(\cB,I) = m(\cB,I^{2c})= l.
\end{equation}
Then $m(\cC,I^c) = l$.
\end{prop}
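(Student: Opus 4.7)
The plan is to fix a $\delta$-matching witnessing $d_{bottle}(\cB,\cC)<c$ and then chase interval endpoints in both directions to prove the equality $m(\cC,I^c)=l$ as two inequalities.

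\textbf{Setup.} Since $d_{bottle}(\cB,\cC)<c$, choose $\delta$ with $d_{bottle}(\cB,\cC)<\delta<c$ and fix a $\delta$-matching $\mu$ between $\cB$ and $\cC$. Recall that for $\mu(J)=K$ we have $J\subset K^{-\delta}$ and $K\subset J^{-\delta}$, where $(x,y]^{-\delta}=(x-\delta,y+\delta]$. Write $I=(a,b]$, so $I^c=(a+c,b-c]$ and $I^{2c}=(a+2c,b-2c]$; by hypothesis these are nondegenerate since $I$ has length $>4c$.

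\textbf{Lower bound $m(\cC,I^c)\geq l$.} Let $J_1,\dots,J_l$ be the (distinct) bars of $\cB$ containing $I$; each has length $>4c>2\delta$, so each lies in the coimage of $\mu$ and has a matched partner $K_i=\mu(J_i)$ in $\cC$. Writing $J_i=(a_i,b_i]$ and $K_i=(c_i,d_i]$, the hypotheses $J_i\supset I$ give $a_i\leq a$ and $b_i\geq b$, while $J_i\subset K_i^{-\delta}$ forces $c_i<a_i+\delta\leq a+\delta<a+c$ and $d_i\geq b_i-\delta\geq b-\delta>b-c$. Hence $K_i\supset I^c$ for each $i$, and the $K_i$ are distinct because $\mu$ is a bijection on its (co)image; this yields $m(\cC,I^c)\geq l$.

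\textbf{Upper bound $m(\cC,I^c)\leq l$.} Suppose for contradiction that there are $l+1$ distinct bars $K'_1,\dots,K'_{l+1}$ of $\cC$ containing $I^c$. Each has length at least $|I^c|=|I|-2c>2c>2\delta$, so lies in the image of $\mu$; let $J'_i=\mu^{-1}(K'_i)$. Writing $K'_i=(c'_i,d'_i]$ with $c'_i\leq a+c$ and $d'_i\geq b-c$, the matching condition $K'_i\subset(J'_i)^{-\delta}$ applied to $J'_i=(a'_i,b'_i]$ gives $a'_i<c'_i+\delta\leq a+c+\delta<a+2c$ and $b'_i>d'_i-\delta\geq b-c-\delta>b-2c$. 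Hence each $J'_i$ contains $I^{2c}$, giving $l+1$ distinct bars in $\cB$ containing $I^{2c}$, which contradicts $m(\cB,I^{2c})=l$.

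The only real work is the careful endpoint bookkeeping above; I expect no further obstacle, since the strict inequality $\delta<c$ provides exactly the slack needed at each step to convert the matching constraints into strict containments of intervals.
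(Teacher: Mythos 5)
Your proof is correct and follows essentially the same route as the paper: fix a $\delta$-matching with $\delta<c$, observe that all relevant bars are long enough to be matched, and use the matching constraints (as interval inclusions, or equivalently your endpoint inequalities) to push bars of $\cB\supset I$ into bars of $\cC\supset I^c$ and vice versa. The only cosmetic difference is that the paper phrases the count as a single bijection between the two sets, explicitly invoking the fact that a bar of $\cB$ containing $I^{2c}$ must contain $I$, while you split it into the two inequalities and encode the same fact as a contradiction against $m(\cB,I^{2c})=l$.
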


\begin{proof}
First, \eqref{eq-bottle} yields that if a bar $E$ of $\cB$ contains $I^{2c}$,
it necessarily contains $I$.

Next, by definition of the bottleneck distance, there exists a $\delta$-matching $\nu$
between $\cB$ and $\cC$ with $\delta < c$. Note that the set of bars of $\cB$ containing $I$ lies in $\text{coim}(\nu)$ and the set of bars of $\cC$ containing $I^c$ lies in $\text{im}(\nu).$ We claim that $\nu$ establishes a bijection between these two sets.

Let $E \in \langle \cB \rangle$ be a bar containing $I$. Then $E \subset \nu(E)^{-\delta},$ implying \[I^c \subset E^\delta \subset \nu(E).\] In the other direction, let $J \in \langle \cC \rangle$ be a bar containing $I^c.$ Then $J=\nu(E)$ for some $E \in \langle \cB \rangle$ and by the same argument as before \[I^{2c} \subset J^\delta \subset E.\] Hence by our first observation $E$ contains $I.$ This finishes the proof.
\end{proof}

\subsection{Persistence modules with a $\Z_k$-action}\label{subsec-persist-invol}
The main result of the present section, Theorem \ref{thm-pmi-main} below, is an algebraic counterpart
of the geometric ``distance to $k$-th powers" problem appearing in Theorem \ref{thm-main-squares}.

Let  $(V,\pi)$ be a {\em {$\Z_k$} persistence module}, which is a persistence module equipped with a $\Z_k$-action, given by an automorphism $A: V \to V$ of persistence modules (this means that $A$ is a morphism $V \to V$ with $A^k = \id$). We denote this data by $(V,\pi;A)$. For simplicity of exposition we assume that $k=p$ is a prime, and fix it.

\medskip
\noindent
{\bf Assumption:} We shall assume that the ground field $\K$ has characteristic $\neq p,$ contains all $p$-th roots of unity\footnote{That is the polynomial $x^p-1 \in \K[x],$ which is separable by the assumption $\mathrm{char}(\K) \neq p,$ splits over $\K.$}, and fixing a primitive $p$-th root of unity $\zeta_p,$ the equation $x^p -(\zeta_p)^q = 0,$ for any integer $q$ not divisible by $p,$ has no solutions in $\K.$ The following lemma gives an example of a field satisfying this condition.

\begin{lma}\label{Lemma: equation has no solution}
Let $p \geq 2$ be a prime number, and let $\Q_p$ be the cyclotomic field obtained from $\Q$ by adjoining a primitive $p$-th root of unity $\zeta_p.$ Then the equation $x^p = \zeta_p^q,$ where $q \in \Z$ with $gcd(p,q)=1,$ has no solution $x$ in $\Q_p.$
\end{lma}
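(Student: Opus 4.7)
\begin{pfs}
The plan is to argue by contradiction using a cyclotomic degree computation. Suppose there exists $x \in \Q_p$ with $x^p = \zeta_p^q$ and $\gcd(p,q)=1$. Then $x^{p^2} = (\zeta_p^q)^p = 1$, so $x$ is a $p^2$-th root of unity; denote its multiplicative order by $d$, so $d \mid p^2$ and hence $d \in \{1, p, p^2\}$.

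Next I would rule out $d \in \{1,p\}$: if $d$ divided $p$, then $x^p = 1$, forcing $\zeta_p^q = 1$; but $\zeta_p$ is a primitive $p$-th root of unity and $\gcd(p,q)=1$, so $\zeta_p^q$ itself is a primitive $p$-th root of unity and in particular $\zeta_p^q \neq 1$. Thus $d = p^2$, i.e.\ $x$ is a primitive $p^2$-th root of unity. Consequently $\Q(x) = \Q(\zeta_{p^2})$, and therefore $\Q(\zeta_{p^2}) \subseteq \Q_p = \Q(\zeta_p)$.

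The contradiction then comes from comparing degrees over $\Q$: we have $[\Q(\zeta_{p^2}):\Q] = \varphi(p^2) = p(p-1)$, while $[\Q(\zeta_p):\Q] = \varphi(p) = p-1$. Since $p \geq 2$, these are unequal, so $\Q(\zeta_{p^2})$ cannot be contained in $\Q(\zeta_p)$, giving the required contradiction.

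The only non-routine step is justifying that a solution $x$ must generate the full $p^2$-th cyclotomic field, i.e.\ that $x$ cannot be a root of unity of smaller order; this is handled cleanly by the observation above that $\zeta_p^q$ is itself primitive of order $p$ precisely because $\gcd(p,q)=1$. Everything else is standard cyclotomic degree bookkeeping.
\end{pfs}
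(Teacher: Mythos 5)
Your proof is correct and takes essentially the same approach as the paper's: both reduce the problem to showing that a solution would force $\Q(\zeta_{p^2}) \subseteq \Q(\zeta_p)$ and then derive a contradiction from cyclotomic field theory. The only minor difference is in the intermediate step — the paper writes $x = \zeta_{p^2}^q \cdot \zeta_p^\tau$ and uses a B\'ezout identity to recover $\zeta_{p^2}$ itself inside $\Q_p$, whereas you show directly that $x$ has multiplicative order exactly $p^2$ and hence already generates $\Q(\zeta_{p^2})$; both routes are valid and comparably short.
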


\begin{proof}
Assume on the contrary that $x$ is a solution. Choose a primitive $p^2$-th root of unity $\zeta_{p^2}$ such that $\zeta_{p^2}^p = \zeta_p.$ In the algebraic closure $\overline{\Q}$ of $\Q,$ for some $\tau \in \N,$  $x = \zeta_{p^2}^q \cdot (\zeta_p)^\tau \in \Q_p,$ thus $\zeta_{p^2}^q$ lies in $\Q_p.$ Also, $\zeta_{p^2}^p = \zeta_p$ lies in $\Q_p.$ Taking $u,v \in \Z$ with $uq+vp=1,$ we get that $\zeta_{p^2}= (\zeta_{p^2}^q)^u \cdot (\zeta_{p^2}^p)^v \in \Q_p.$ Thus $\Q_{p^2}= \Q_p,$ contradicting basic theory of cyclotomic fields (cf. \cite[Corollary 7.8]{Morandi}).
\end{proof}

\begin{rmk}
For $p=2$ a {$\Z_p$ persistence module} is a persistence module $V$ with involution $A,$ the primitive root of unity is $\zeta_2 = -1,$ the condition on $\K$ is that $\mathrm{char}(K) \neq 2$ and $x^2+1 = 0$ has no solution in $\K,$ and an example of such a field $\K$ is simply $\Q_2 = \Q.$ Another example is $\K = \Z_3.$
\end{rmk}

The following lemma serves to relate the assumption on the ground field $\K$ to the multiplicities of barcodes of persistence modules.
\medskip

\begin{lma}\label{Lemma: dimension divisible by m}
Let $p$ be a prime number, $V$ - vector space over $\K$ with an operator $B$
satisfying $B^p = \zeta_p \cdot \id.$ Then $p$ divides $\dim V.$
\end{lma}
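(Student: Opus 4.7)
The plan is to upgrade $V$ into a vector space over a degree-$p$ field extension of $\K$, from which divisibility of $\dim_\K V$ by $p$ is automatic. The key intermediate fact is that the polynomial $P(x) = x^p - \zeta_p$ is irreducible over $\K$; once this is established, the rest is formal linear algebra.

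First I would observe that the identity $B^p = \zeta_p \cdot \id$ means that $B$ satisfies the polynomial $P(x) = x^p - \zeta_p \in \K[x]$, so the minimal polynomial $m_B(x)$ of $B$ divides $P(x)$. Next I would invoke the irreducibility of $P(x)$ over $\K$: writing a hypothetical factorization $P(x) = f(x) g(x)$ with $0 < \deg f < p$ and examining the constant term $-\zeta_p = (-1)^p f(0) g(0)$, together with the fact that the roots of $P$ in an algebraic closure are $\zeta_{p^2} \zeta_p^j$ with $\zeta_{p^2}^p = \zeta_p$, would show that $f(0) = \pm \zeta_{p^2}^d \zeta_p^\tau$ for some $d = \deg f$ and $\tau \in \Z$. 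Using $\gcd(d,p)=1$, I can then raise this to the $p$-th power and, after tidying up the root of unity prefactors, produce an element $x \in \K$ satisfying $x^p = \zeta_p^q$ with $\gcd(q,p)=1$, directly contradicting the assumption on $\K$. (This is essentially Capelli's criterion for $x^p - a$ specialized to $a = \zeta_p$, and it echoes the computation in Lemma \ref{Lemma: equation has no solution}.)

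Once $P(x)$ is known to be irreducible, I would form the field extension $L := \K[x]/(x^p - \zeta_p)$, which has $[L:\K] = p$. Since $P(B) = 0$ on $V$, the $\K$-algebra map $\K[x] \to \mathrm{End}_\K(V)$ sending $x \mapsto B$ factors through $L$, making $V$ into an $L$-module, hence an $L$-vector space. Therefore
\[
\dim_\K V \;=\; [L:\K] \cdot \dim_L V \;=\; p \cdot \dim_L V,
\]
and in particular $p \mid \dim_\K V$, which is the desired conclusion.

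The main obstacle is the irreducibility step; once that is in hand everything is a one-line dimension count. An alternative route, which avoids explicitly invoking Capelli's theorem, is to apply the structure theorem for finitely generated $\K[x]$-modules to $V$ with $x$ acting as $B$: since $m_B(x) \mid P(x)$ and $P(x)$ is irreducible, the only possible invariant factors are $P(x)$ itself, so $V \cong (\K[x]/(P))^{m}$ as $\K[x]$-modules for some $m \geq 0$, whence $\dim_\K V = pm$. I would mention this as a variant but carry out the field-extension version since it is the shortest.
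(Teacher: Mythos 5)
Your argument is correct but takes a genuinely different, and considerably longer, route than the paper's. The paper's proof is a single line: apply $\det$ to the identity $B^p=\zeta_p\cdot\id$ to get $(\det B)^p=\zeta_p^{\dim V}$; if $p\nmid\dim V$, then $\det B\in\K$ is a solution of $x^p=\zeta_p^q$ with $\gcd(q,p)=1$, contradicting the standing hypothesis on $\K$. Your proof instead first establishes the irreducibility of $x^p-\zeta_p$ over $\K$ (your Capelli-type sketch does work, but note that it secretly invokes the hypothesis on $\K$ in essentially the same spot: a hypothetical monic factor $f$ with $0<\deg f=d<p$ forces $f(0)^p$, up to a sign that can be absorbed, to equal $\zeta_p^d$ with $p\nmid d$), and then upgrades $V$ to a vector space over $L=\K[x]/(x^p-\zeta_p)$ with $[L:\K]=p$, from which the divisibility falls out by a dimension count. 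Both routes exploit the field hypothesis in the same way, and both are valid; the determinant computation is immediate and avoids irreducibility entirely, whereas your approach pays the irreducibility overhead in exchange for a finer structural conclusion, namely that $V$ is a free $L$-module (equivalently, $B$ is diagonalizable over an algebraic closure with all $p$ eigenvalues having equal multiplicity), of which $p\mid\dim_\K V$ is only the crudest consequence.
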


\begin{proof}
As $(\det B)^p = (\zeta_p)^{\dim V},$ the lemma follows immediately from the property of $\K.$
\end{proof}

\medskip
An {\em equivariant interleaving} between {$\Z_p$ persistence modules} $V$ and $W$ is defined as an interleaving respecting
the $\Z_p$-actions. Given two {$\Z_p$ persistence modules}, define the distance $\widehat{d}_{inter}$ between them
as the infimum of $\delta$ such that they admit an equivariant $\delta$-interleaving.
Clearly, $\widehat{d}_{inter} \geq d_{inter}$.

A {$\Z_p$ persistence module} $(V,\pi;A)$ is called {\it a full $p$-th power} if $A=B^p$ for some morphism $B: V \to V$.
Note that $B^{p^2} = \id$ and $B$ automatically commutes with $A$.

\medskip
\noindent
\begin{df} For a {$\Z_p$ persistence module} $V:=(V,\pi;A)$ set \[\kappa(V)=\inf_W \widehat{d}_{inter}(V,W),\]
where the infimum is taken over all full $p$-th powers $W$.
\end{df}

\medskip
\noindent Our objective is to give a lower bound for $\kappa(V)$ through a barcode
of an auxiliary persistence module $(L_\zeta)_t = \ker(A_t - \zeta\cdot\id),$ for $\zeta \neq 1$ a $p$-th root of unity. Clearly $A_t$ descends to the morphism $A_t = \zeta\cdot \id :(L_\zeta)_t \to (L_\zeta)_t.$ We call $L_\zeta$ the $\zeta$-eigenspace of the {$\Z_p$ persistence module} $V.$

\begin{rmk}
We remark that the persistence module $(L)_t = V_t/\text{Fix}(A_t),$ which appears in Section \ref{Subsection: spread and persistence modules} below, in this situation is isomorphic to the direct sum of eigenspaces with eigenvalue different from $1:$ \[L \cong \displaystyle\bigoplus_{\zeta \neq 1,\, \zeta^p=1} L_\zeta.\] Indeed it is an easy exercise to check that, fixing a primitive $p$-th root of unity $\zeta,$ the operators $\{\pi_k:V \to V\}_{0\leq k \leq p-1}$ given by $\pi_k(v) = \frac{1}{p}\sum \zeta^{-jk} A^j(v)$ give a splitting \[V \cong \displaystyle\bigoplus_{0\leq k \leq p-1} L_{\zeta^k}.\] Indeed $\mathrm{Image}(\pi_k) \subset L_{\zeta^k}$ and $v = \sum_{k=0}^{p-1} \pi_k(v)$ for all $v \in V.$
\end{rmk}

Note that every equivariant interleaving between the $\Z_p$ persistence modules $V$ and $W$ descends to an interleaving
between the $\zeta$-eigenspaces, and hence, by the isometry theorem,
\begin{equation}\label{eq-dist-inter-shad}
\widehat{d}_{inter}(V,W) \geq d_{inter}(L_\zeta,K_\zeta) = d_{bottle}(\cB(L_\zeta),\cB(K_\zeta))\;.
\end{equation}

\medskip
\noindent

\begin{prop}\label{prop-multipl}
Let $L_\zeta$ be the $\zeta$-eigenspace of a full $p$-th power {$\Z_p$ persistence module} for a primitive $p$-th root of unity $\zeta.$ Then for every interval $I \subset \R$ the multiplicity
$m(\cB(L_\zeta),I)$ is divisible by $p.$
\end{prop}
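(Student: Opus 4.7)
My plan is to reduce the divisibility of the barcode multiplicities to divisibility by $p$ of ranks of persistence morphisms on $L_\zeta$, and then to establish the latter by applying Lemma \ref{Lemma: dimension divisible by m} to restrictions of the operator $B$.

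First, since $B$ commutes with $A = B^p$, it preserves every $A$-eigenspace; in particular $B$ restricts to a persistence-module endomorphism $B' : L_\zeta \to L_\zeta$ satisfying $(B')^p = \zeta \cdot \id$. Because $B$ is itself a morphism of persistence modules, i.e.\ $B_t \circ \pi_{s,t} = \pi_{s,t} \circ B_s$, for every $s < t$ the subspace
\[
W_{s,t} \,:=\, \pi^{L_\zeta}_{s,t}\bigl((L_\zeta)_s\bigr) \,\subset\, (L_\zeta)_t
\]
is automatically $B'_t$-invariant, and the restriction $B'_t|_{W_{s,t}}$ continues to satisfy $(B'_t|_{W_{s,t}})^p = \zeta \cdot \id$. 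Lemma \ref{Lemma: dimension divisible by m} (extended routinely from $\zeta_p$ to any primitive $p$-th root $\zeta$ via the hypothesis on $\K$, since $\zeta = \zeta_p^q$ with $\gcd(q,p)=1$) then yields
\[
\rank\bigl(\pi^{L_\zeta}_{s,t}\bigr) \,=\, \dim W_{s,t} \,\equiv\, 0 \pmod{p}
\]
for every $s < t$.

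Next, I would invoke the structure theorem (Theorem \ref{Theorem: structure of persistence modules}) to write $L_\zeta \cong \bigoplus_j Q(I_j)^{m_j}$ with $I_j = (a_j, b_j]$. A direct computation on each summand gives
\[
\rank\bigl(\pi^{L_\zeta}_{s,t}\bigr) \,=\, \sum_{j \,:\, a_j < s,\; t \leq b_j} m_j
\]
whenever $s, t$ lie off the spectrum. Given an arbitrary interval $I \subset \R$ with $\alpha := \inf I$ and $\beta := \sup I$, the multiplicity $m(\cB(L_\zeta), I)$ collects exactly the bars containing $I$, so by picking $s$ just to the right of $\alpha$ (or $s = \alpha$, depending on whether $I$ is open or closed on the left) and $t$ equal to $\beta$ (or just below, as appropriate) within small neighborhoods avoiding the finite spectrum of $L_\zeta$, the two sums agree. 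Combining this identification with the previous step gives $m(\cB(L_\zeta), I) \equiv 0 \pmod{p}$, as required.

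The conceptual heart of the argument lies in the first step, where $B$ provides the algebraic divisibility constraint through Lemma \ref{Lemma: dimension divisible by m}. The second step is routine bookkeeping; its only mild subtlety is matching the open or closed nature of the endpoints of $I$, which poses no real difficulty thanks to the finiteness of the spectrum.
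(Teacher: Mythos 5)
Your proof is correct and follows essentially the same route as the paper's: restrict $B$ to $L_\zeta$ to get $B'$ with $(B')^p=\zeta\cdot\id$, observe that images of the persistence morphisms are $B'$-invariant, invoke Lemma \ref{Lemma: dimension divisible by m} to get $p\mid\rank\pi_{s,t}$, and translate ranks into bar multiplicities via the structure theorem. Your explicit remark that the lemma extends from $\zeta_p$ to an arbitrary primitive root $\zeta=\zeta_p^q$ (since $\gcd(q,p)=1$ forces $q\dim V\not\equiv 0\pmod p$ whenever $\dim V\not\equiv 0$) is a small but genuine gap-fill that the paper leaves implicit.
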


\begin{proof}
Assume that $A_t=B_t^p$. Since $B_t$ commutes with $A_t$
it preserves $(L_\zeta)_t.$ Let $B'_t: (L_\zeta)_t \to (L_\zeta)_t$ denote the restriction $B'_t = (B_t)|_{(L_\zeta)_t}$ to $(L_\zeta)_t.$ Note that $B'_t$ satisfies $(B'_t)^p = \zeta$. Furthermore, denoting by $\theta_{st}: (L_\zeta)_s \to (L_\zeta)_t$ the persistence morphisms and using that $B'_t\theta_{st}=\theta_{st}B'_s$, we get that $\text{Image}(\theta_{st})$ is invariant under $B'_t$.

By Lemma \ref{Lemma: dimension divisible by m} one concludes that the dimension of $\text{Image}(\theta_{st})$ {\bf is divisible by $p$} for all $s <t$. By looking at the normal form of $\{(L_\zeta)_t\},$
we readily conclude the desired statement, since for an interval $I=(a,b]$, every bar containing $I$ contributes $1$
to $\dim \text{Image}(\theta_{a_+,b_-})$, while all other bars contribute $0$.
\end{proof}

\medskip
\noindent
\begin{df}\label{df-sens-spread}
For a primitive $p$-th root of unity $\zeta$ define $\mu_{p,\zeta}(V,A)$ as the supremum of those $c \geq 0$ for which there exists an interval $I$ of length $>4c$ and  such that $m(\cB(L_\zeta),I) = m(\cB(L_\zeta),I^{2c})=l$ with $l \neq 0 \mod{p},$ where $L_\zeta$ is the $\zeta$-eigenspace of $V$. The {\it multiplicity sensitive spread} $\mu_p(V,A)$ of a {$\Z_p$ persistence module} $V$ is defined as \[\mu_{p}(V,A) = \max_{\zeta \neq 1:\;\zeta^p=1}\mu_{p,\zeta}(V,A).\]

\end{df}

\medskip
\noindent
\begin{prop}\label{mu2-Lip} $|\mu_p(V)-\mu_p(W)| \leq \widehat{d}_{inter}(V,W)$
for every pair of {$\Z_p$ persistence modules} $V$ and $W$.
\end{prop}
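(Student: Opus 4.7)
The plan is first to reduce the estimate to each $\zeta$-eigenspace. An equivariant $\delta$-interleaving between $(V,A)$ and $(W,A')$ restricts to an ordinary $\delta$-interleaving of the persistence modules $L_\zeta(V)$ and $L_\zeta(W)$ for every primitive $p$-th root of unity $\zeta$, so Theorem \ref{Theorem: Isometry theorem} gives $d_{bottle}(\cB(L_\zeta(V)),\cB(L_\zeta(W)))\leq \widehat{d}_{inter}(V,W)$. I would then prove the sharper statement that each $\mu_{p,\zeta}$ is itself $1$-Lipschitz in this bottleneck distance; the bound for $\mu_p=\max_\zeta \mu_{p,\zeta}$ follows by taking maxima and swapping $V$ and $W$.

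Fix $\zeta$ and write $\cB=\cB(L_\zeta(V))$, $\cC=\cB(L_\zeta(W))$, $d:=\widehat{d}_{inter}(V,W)$, assuming $\mu_{p,\zeta}(V)>d$ (otherwise the inequality is trivial). Pick $c$ slightly below $\mu_{p,\zeta}(V)$, an interval $I$ of length $>4c$, and $l\not\equiv 0\pmod p$ with $m(\cB,I)=m(\cB,I^{2c})=l$. Monotonicity of $x\mapsto m(\cB,I^x)$ forces $m(\cB,I^x)=l$ with the same $l$ bars $E_1,\dots,E_l$ witnessing it for every $x\in[0,2c]$. The technical heart of the plan is to show, for any $d'\in(d,c)$ and any $d'$-matching $\nu$ between $\cB$ and $\cC$, that
\[
m(\cC,I^y)=l\quad\text{for every } y\in[d',\,2c-d').
\]
The forward inequality is immediate: the matched bars $J_i:=\nu(E_i)$ satisfy $J_i\supset I^{d'}$, hence contain $I^y$ for all $y\geq d'$. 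For the reverse, a bar $K$ of $\cC$ containing $I^y$ with $K\notin\{J_1,\dots,J_l\}$ must either lie in $\mathrm{im}(\nu)$, in which case its preimage $E_K:=\nu^{-1}(K)$ satisfies $E_K\supset K^{d'}\supset I^{y+d'}\supset I^{2c}$ (since $y<2c-d'$), forcing $E_K\in\{E_1,\dots,E_l\}$ and hence $K\in\{J_1,\dots,J_l\}$, a contradiction; or else it lies outside $\mathrm{im}(\nu)$, so the matching condition gives $\mathrm{length}(K)\leq 2d'$, contradicting $\mathrm{length}(K)\geq\mathrm{length}(I)-2y>4c-2y\geq 2d'$.

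Given this window of width $2(c-d')$, I conclude by choosing $J:=I^{d'+\epsilon}$ and $c'':=c-d'-\epsilon$ for small $\epsilon>0$; then $\mathrm{length}(J)>4c''$ and $m(\cC,J)=m(\cC,J^{2c''})=l$, so $\mu_{p,\zeta}(W)\geq c-d'-\epsilon$. Letting $\epsilon\to 0$, then $d'\to d^+$, then $c\to\mu_{p,\zeta}(V)^-$, yields $\mu_{p,\zeta}(W)\geq\mu_{p,\zeta}(V)-d$, as needed.

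The main obstacle is extracting the window of width $2(c-d')$ rather than $c-d'$. A direct application of Proposition \ref{prop-dbottle} to the interval $I$ only delivers $m(\cC,I^c)=l$ at a single shrinking parameter, which would translate to a Lipschitz constant of $2$ in place of the sharp $1$. The factor-of-two gain relies on simultaneously exploiting both ends of the $V$-side robustness: the matched bars of $\cC$ provide stability from below at $y=d'$, while the matching's length lower bound rules out short intruding bars all the way up to $y=2c-d'$.
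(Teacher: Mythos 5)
Your proof is correct, and it establishes the same sharp Lipschitz constant $1$ by the same underlying mechanism as the paper: one must exploit robustness at \emph{both} ends of the shrinking family $I^x$, $x\in[0,2c]$, simultaneously.

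The packaging differs slightly. You work directly with the $d'$-matching: you track the $l$ bars $E_1,\dots,E_l$ through the matching to get a lower bound on $m(\cC,I^y)$, and rule out extraneous bars of $\cC$ using the length threshold, obtaining the clean statement $m(\cC,I^y)=l$ for all $y\in[d',2c-d')$. The paper instead invokes its Proposition~\ref{prop-dbottle} \emph{twice} with shrinking parameter $2\epsilon$ rather than $2c$ — once applied to $I$ (giving $m(\cC,I^\epsilon)=l$) and once to $I^{2c-2\epsilon}$ (giving $m(\cC,I^{2c-\epsilon})=l$) — which yields exactly the same robustness window. So your remark that ``a direct application of Proposition~\ref{prop-dbottle} ... would translate to a Lipschitz constant of $2$'' is a bit of a straw man: the paper avoids that loss by the double application at parameter $2\epsilon$, not $2c$. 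Your direct-matching route is self-contained and arguably more transparent about why the window has width $2(c-d')$; the paper's route is shorter given that Proposition~\ref{prop-dbottle} is already on record and gets reused elsewhere (e.g.\ in Theorem~\ref{thm-mu2-stabilization}). Both are valid.
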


\begin{proof}
We first claim that it is enough to show that \[|\mu_{p,\zeta}(V)-\mu_{p,\zeta}(W)| \leq \widehat{d}_{inter}(V,W)\] for all $p$-th roots of unity $\zeta \neq 1.$ By the symmetry between $V$ and $W$ we can assume that $\mu_p(V) \geq \mu_p(W) \geq 0.$ Let $\zeta_0$ be such that $\mu_p(V) = \max_{p,\zeta_0}(V),$ that is the maximum in Definition \ref{df-sens-spread} is attained at $\zeta_0.$ Then \[|\mu_p(V) - \mu_p(W)| = \mu_{p,\zeta_0}(V) - \mu_p(W) \leq \mu_{p,\zeta_0}(V) - \mu_{p,\zeta_0}(W),\] whence the claim is immediate.

Now fix a $p$-th root of unity $\zeta \neq 1.$ Let $L_\zeta$ and $K_\zeta$ be the $\zeta$-eigenspaces of $V$ and $W$, respectively. By \eqref{eq-dist-inter-shad} it is enough to show \begin{equation}\label{eq-mu2-Lip-bottle}
|\mu_{p,\zeta}(V)-\mu_{p,\zeta}(W)| \leq d_{bottle}(\cB(L_\zeta),\cB(K_\zeta)).
\end{equation} Let $d_{bottle}(\cB(L_\zeta),\cB(K_\zeta)) =\epsilon$. By symmetry between $V$ and $W$ it suffices to show that
\begin{equation}\label{eq-vsp-mult-shad}
\mu_{p,\zeta}(V) -\mu_{p,\zeta}(W) \leq \epsilon\;.
\end{equation}
Let $\mu_{p,\zeta}(V) =c > 0$. Assume without loss of generality that $\epsilon < c$.
Take an interval $I$ such that $m(\cB(L_\zeta),I)= m(\cB(L_\zeta),I^{2c})= l$ with $l \neq 0 \mod p$.
This yields
$$m(\cB(L_\zeta),I^{2c})= m(\cB(L_\zeta),I^{2c-2\epsilon})=l,\;\; m(\cB(L_\zeta),I) = m(\cB(L_\zeta), I^{2\epsilon})=l\;,$$
and hence by Proposition \ref{prop-dbottle}
$$m(\cB(K_\zeta),I^{2c-\epsilon})= m(\cB(K_\zeta),I^{\epsilon})=l\;.$$
It follows that
$\mu_{p,\zeta}(W) \geq c-\epsilon$ which yields \eqref{eq-vsp-mult-shad}.
\end{proof}

\medskip
\noindent
\begin{thm}
\label{thm-pmi-main} $\kappa(V) \geq \mu_p(V)$ for every {$\Z_p$ persistence module} $V$.
\end{thm}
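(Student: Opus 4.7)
The plan is to combine the two propositions just established: Proposition \ref{prop-multipl} (bar multiplicities of the $\zeta$-eigenspace of a full $p$-th power are divisible by $p$) and Proposition \ref{mu2-Lip} (the multiplicity sensitive spread $\mu_p$ is $1$-Lipschitz with respect to $\widehat{d}_{inter}$). The strategy is to first show that $\mu_p$ vanishes on every full $p$-th power, and then apply the Lipschitz estimate and take the infimum over such powers.

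First I would argue that $\mu_p(W) = 0$ for every full $p$-th power {$\Z_p$ persistence module} $W$. Indeed, fix a primitive $p$-th root of unity $\zeta \neq 1$, and let $K_\zeta$ denote the corresponding eigenspace persistence module of $W$. By Proposition \ref{prop-multipl}, the multiplicity $m(\cB(K_\zeta), I)$ is divisible by $p$ for every interval $I \subset \R$. Hence there is no interval $I$ with $m(\cB(K_\zeta), I) = l$ for some $l \not\equiv 0 \pmod{p}$, which forces $\mu_{p,\zeta}(W) = 0$. Since this holds for every primitive $p$-th root of unity $\zeta \neq 1$, and since for $p$ prime every $p$-th root of unity different from $1$ is primitive, we conclude $\mu_p(W) = \max_{\zeta \neq 1,\, \zeta^p = 1} \mu_{p,\zeta}(W) = 0$.

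Next, for an arbitrary {$\Z_p$ persistence module} $V$ and any full $p$-th power $W$, Proposition \ref{mu2-Lip} gives
\[
\mu_p(V) = |\mu_p(V) - \mu_p(W)| \leq \widehat{d}_{inter}(V, W),
\]
using $\mu_p(W) = 0$ from the previous step. Taking the infimum of the right-hand side over all full $p$-th powers $W$ yields
\[
\mu_p(V) \leq \inf_W \widehat{d}_{inter}(V, W) = \kappa(V),
\]
which is exactly the desired inequality. No further obstacle is anticipated, since the substantive content, namely the divisibility-by-$p$ of bar multiplicities (which rests on Lemmas \ref{Lemma: dimension divisible by m} and \ref{Lemma: equation has no solution} via the hypothesis on $\K$) and the Lipschitz estimate (which uses the isometry theorem), has already been carried out.
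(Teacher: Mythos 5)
Your proposal is correct and follows the same two-step argument as the paper: use Proposition \ref{prop-multipl} to deduce $\mu_p(W)=0$ for every full $p$-th power $W$, then apply the Lipschitz estimate of Proposition \ref{mu2-Lip} and take the infimum over $W$. You spell out the vanishing step in slightly more detail (including the harmless observation that for $p$ prime all $p$-th roots of unity other than $1$ are primitive), but the route is identical to the paper's.
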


\begin{proof}
By Proposition \ref{prop-multipl} $\mu_{p}(W)= 0$ for every full $p$-th power {$\Z_p$ persistence module}
$W$. Thus by Proposition \ref{mu2-Lip}
$$\widehat{d}_{inter}(V,W) \geq \mu_p(V)\;,$$
and hence $\kappa (V) \geq \mu_p(V)$.
\end{proof}

\subsection{A new invariant of Hamiltonian diffeomorphisms} \label{Example: pmi from a square}
Fix a prime number $p.$ Write $\G_p$ for the set of Hamiltonian diffeomorphisms whose primitive $p$-periodic
orbits are non-degenerate. Take any $\phi \in \G_p$ and fix a primitive free homotopy class $\al$ of loops.
Then for each degree $r \in \Z$
$$
V_r(\phi) := (HF^{(-\infty,\cdot)}_r(\phi^p)_\al,[\rR_p])
$$
is a {$\Z_p$ persistence module}. Here $\rR_p$ is the loop rotation operator induced by $t \mapsto t+1/p$, or, equivalently,
induced by the action of $\phi$ on the filtered Floer homology of $\phi^p$. Clearly, it induces a $\Z_p$-action
on the persistence module $HF^{(-\infty,\cdot)}_r(\phi^p)_\al$. Fixing a primitive $p$-th root of unity $\zeta \neq 1,$ we write $L_r(\phi)=L_r(\phi)_\zeta$ for the $\zeta$-eigenspace of $V_r(\phi)$, and $\cB_r(\phi)=\cB_r(\phi)_\zeta$ for the barcode of $L_r(\phi)$.

By Lemma \ref{Lemma: continuation representation maps}, for every $\phi,\psi \in \G_p$ the
{$\Z_p$ persistence modules} $V_r(\phi)$ and $V_r(\psi)$ are equivariantly $\delta$-interleaved
with $\delta= p \cdot d(\phi,\psi)$, where $d$ stands for Hofer's metric. Therefore \eqref{eq-dist-inter-shad}
implies that
\begin{equation}\label{eq-dist-inter-shad-1}
 d_{bottle}(\cB_r(\phi),\cB_r(\psi)) \leq \widehat{d}_{inter}(V_r(\phi),V_r(\psi)) \leq p\cdot d(\phi,\psi)\;\;\forall \phi,\psi \in \G_p\;.
\end{equation}
In particular, the barcode mapping
$$\phi \mapsto \cB_r(\phi)$$
is Lipschitz with respect to Hofer's metric $d$ on $\Ham(M)$ and the bottleneck distance
on the space of barcodes.

Put $\mu_{p,\zeta}(r,\phi) = \mu_{p,\zeta}(V_r(\phi),A_r(\phi)),$ and \[\mu_p(r,\phi)  = \max_{\zeta \neq 1:\;\zeta^p=1}\mu_{p,\zeta}(r,\phi) = \mu_p(V_r(\phi)).\] Define {\it the multiplicity sensitive spread} $\mu_p(\phi)$ of a Hamiltonian diffeomorphism as \[\mu_p(\phi) = \max_{r \in \Z}\mu_p(r,\phi).\] By  \eqref{eq-dist-inter-shad-1} and \eqref{eq-mu2-Lip-bottle} we get that
\begin{equation}\label{eq-ineq-mu2-hofer}
|\mu_p(\phi) -\mu_p(\psi)| \leq p\cdot d(\phi,\psi)\;.
\end{equation}
Since $\G_p$ is Hofer-dense in $\Ham(M)$, it follows that the invariant $\mu_p$ can be extended
by continuity to the whole $\Ham(M)$, and the extension is still $p$-Lipschitz in Hofer's norm.

The multiplicity sensitive spread yields the following estimate for the distance to $\Pow_p$:

\medskip
\noindent
\begin{thm}\label{thm-mu2-squares}
$d(\phi, \Pow_p) \geq \frac{1}{p}\cdot \mu_p(\phi)$
for all $\phi \in \Ham(M)$.
\end{thm}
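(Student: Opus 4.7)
The plan is to show that $\mu_p$ vanishes identically on $\Pow_p$; once that is in hand, the theorem follows immediately from the Lipschitz bound \eqref{eq-ineq-mu2-hofer}, since for any $\phi' \in \Pow_p$ we have
\[\mu_p(\phi) = |\mu_p(\phi) - \mu_p(\phi')| \leq p\cdot d(\phi,\phi'),\]
and taking the infimum over $\phi' \in \Pow_p$ yields $\mu_p(\phi) \leq p\cdot d(\phi,\Pow_p)$.

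First, for $\phi' = \psi^p \in \Pow_p \cap \G_p$, pick a Hamiltonian $F$ generating $\psi$ and set $G := F^{(p)}$, which generates $\phi'$. Then $G^{(p)} = F^{(p^2)}$ generates $(\phi')^p = \psi^{p^2}$, and the underlying filtered Floer complex $CF(G^{(p)})_\al = CF(F^{(p^2)})_\al$ carries two loop-rotation operators preserving the action functional: $A := [\rR_p]$, induced by $z(t) \mapsto z(t+1/p)$ (the $\phi'$-viewpoint), and $B := [\rR_{p^2}]$, induced by $z(t) \mapsto z(t+1/p^2)$ (the $\psi$-viewpoint), the latter being well-defined because $F^{(p^2)}$ is $1/p^2$-periodic in $t$. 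Since $\rR_{p^2}^{\,p} = \rR_p$ on the loop space, and rotations commute with comparison and continuation maps (Diagram \eqref{Diagram: rotation commutes with comparison} and Lemma \ref{Lemma: continuation representation maps}), $B$ defines a morphism of $V_r(\phi')$ satisfying $B^p = A$. This realizes $V_r(\phi')$ as a full $p$-th power $\Z_p$ persistence module, whence Proposition \ref{prop-multipl} forces every multiplicity in $\cB_r(\phi')_\zeta$ to be divisible by $p$ for every primitive $p$-th root of unity $\zeta \neq 1$, giving $\mu_{p,\zeta}(r,\phi') = 0$ for all $r$ and $\zeta$, and hence $\mu_p(\phi') = 0$.

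For a general $\phi' = \psi^p \in \Pow_p$, approximate $\psi$ in the $C^\infty$-topology (and therefore in Hofer's metric) by strongly non-degenerate diffeomorphisms $\psi_n$, whose density in $\Ham(M,\om)$ is recalled in the proof of Proposition \ref{Propsotion: Conley}. Strong non-degeneracy of $\psi_n$ implies $\psi_n^p \in \G_p$, so the previous step gives $\mu_p(\psi_n^p) = 0$, and the Lipschitz estimate \eqref{eq-ineq-mu2-hofer} passes this to the limit: $\mu_p(\phi') = 0$. The point requiring the most care will be the second paragraph, specifically verifying that $B = [\rR_{p^2}]$ really defines an endomorphism of the persistence module $V_r(\phi')$ and that the identity $B^p = A$ holds in Floer homology; this rests on checking that the persistence module built from $G^{(p)}$ with rotation $\rR_p$ and the one built from $F^{(p^2)}$ with rotation $\rR_{p^2}$ agree canonically in the symplectically aspherical setting, which is essentially the content of Remark \ref{Remark: FH descends to Ham}.
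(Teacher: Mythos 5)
Your proposal is correct and follows essentially the same route as the paper. The paper's proof in Section~\ref{Example: pmi from a square} invokes Theorem~\ref{thm-pmi-main} ($\kappa(V)\geq\mu_p(V)$) together with the interleaving bound \eqref{eq-dist-inter-shad-1}, but unwinding Theorem~\ref{thm-pmi-main} yields exactly your two-step argument: full $p$-th power $\Z_p$-persistence modules have vanishing $\mu_p$ (via Proposition~\ref{prop-multipl}), then apply the Lipschitz estimate. Your construction of $B=[\rR_{p^2}]$ with $B^p=A$ via $H^{(p)}=F^{(p^2)}$ is verbatim the paper's, and your added approximation step handling the degenerate case (via strongly non-degenerate $\psi_n$ and continuity of $\mu_p$) makes explicit something the paper leaves implicit when extending $\mu_p$ to all of $\Ham(M)$.
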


\begin{proof} Let $\theta =\psi^p$ be a $p$-th power of a Hamiltonian diffeomorphism.
Then $V_r(\theta)$ is a full $p$-th power {$\Z_p$ persistence module} for each $r \in \Z.$ Indeed, given a $1$-periodic Hamiltonian $F(t,x)$ generating $\psi,$ we can choose the $1$-periodic Hamiltonian $H(t,x)$ generating $\theta$ as $H(t,x) = F^{(p)}(t,x) = pF(pt,x).$ Then $H^{(p)}(t,x) = p^2 F(p^2 t,x) = F^{(p^2)}(t,x).$ Therefore the operator \[[\rR_{p^2}(F)]:HF^{(-\infty,a)}_r(H^{(p)})_\al \to HF^{(-\infty,a)}_r(H^{(p)})_\al\] is well defined, and moreover the identity \[t+\underbrace{\frac{1}{p^2}+ \ldots+\frac{1}{p^2}}_{p\; \text{times}} = t + \frac{1}{p}\] in $S^1$ shows that \[[\rR_{p^2}(F)]^p = [\rR_p(H)].\]

By \eqref{eq-dist-inter-shad-1} and Theorem \ref{thm-pmi-main}
$$p \cdot d(\phi,\Pow_p) \geq \max_{r,\zeta} \kappa(L_r(\phi)_\zeta) \geq \mu_p(\phi)\;,$$
as required.
\end{proof}

Finally we observe that that the multiplicity sensitive spread is invariant under stabilization.

\begin{thm}\label{thm-mu2-stabilization}
For $\phi \in \Ham(M),$ $\al \in \pi_0(\cL M),$ and any closed connected symplectically aspherical manifold $N,$ consider the stabilization $\phi \times \id \in {\Ham(M \times N)}$ of $\phi.$ Then we have \[\mu_p(\phi) = \mu_p(\phi \times \id_N),\] the latter value being computed in the class $\al \times pt_N$ in $\pi_0(\cL (M \times N)).$
\end{thm}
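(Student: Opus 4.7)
The plan is to establish the equality via both inequalities, closely following the template of the proof of property \ref{itm:property 7} in Proposition \ref{Proposition: properties of the invariant}. First, by the Lipschitz bound \eqref{eq-ineq-mu2-hofer}, it suffices to work with $\phi \times \phi^1_f$ in place of $\phi \times \id_N$, for a Morse function $f$ on $N$ satisfying $|pf|_{C^0} < \delta$ with $\delta$ arbitrarily small, and then let $\delta \to 0$. Since $pf$ is autonomous, the loop-rotation operator $[\rR_p]$ acts trivially on the $N$-factor; the K\"unneth formula yields an isomorphism of $\Z_p$-persistence modules graded by Conley-Zehnder index, valid for action windows bounded away from the $\delta$-neighborhood of $Spec_\al(\Hk)$:
\[L_\zeta(V_r(\phi \times \phi^1_f)) \cong \bigoplus_{i+j=r} L_\zeta(V_i(\phi)) \otimes HM_j(pf),\]
with $[\rR_p]$ acting as $[\rR_p]_\phi \otimes \id$. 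For intervals $I$ with endpoints separated from the bar endpoints of $L_\zeta(V_i(\phi))$ by more than $\delta$, only the $\dim H_j(N;\K)$ infinite bars in $HM_j(pf)$ contribute bars long enough to contain $I$, producing
\[m(\cB_r(\phi \times \phi^1_f)_\zeta, I) = \sum_{i+j=r} m(\cB_i(\phi)_\zeta, I) \cdot \dim H_j(N;\K).\]

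For the direction $\mu_p(\phi) \geq \mu_p(\phi \times \id_N)$, I assume the latter is at least $c$, witnessed by $(r', I', \zeta, l')$. Subtracting the multiplicity expressions for $I'$ and $(I')^{2c}$ gives
\[0 = \sum_{j} \bigl(m_{r'-j}((I')^{2c}) - m_{r'-j}(I')\bigr) \cdot \dim H_j(N;\K),\]
a sum of non-negative integers, so each summand vanishes. Hence for every $j$ with $\dim H_j(N;\K) > 0$, one has $m_{r'-j}(I') = m_{r'-j}((I')^{2c})$. The condition $l' \not\equiv 0 \pmod p$ forces the existence of some $j_0$ with $m_{r'-j_0}(I') \cdot \dim H_{j_0}(N;\K) \not\equiv 0 \pmod p$, and in particular $m_{r'-j_0}(I') \not\equiv 0 \pmod p$. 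Setting $i_0 = r' - j_0$ yields $\mu_{p,\zeta}(i_0, \phi) \geq c$, and hence $\mu_p(\phi) \geq c$.

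For the reverse inequality $\mu_p(\phi \times \id_N) \geq \mu_p(\phi)$, starting from $(r_0, I, \zeta, l_0)$ realizing $\mu_p(\phi) \geq c$, introduce the polynomials $P(t) = \sum_i m_i(I) t^i$, $P'(t) = \sum_i m_i(I^{2c}) t^i$, and the Poincar\'e polynomial $Q_N(t) = \sum_j \dim H_j(N;\K) t^j$. The multiplicity formula expresses the stabilization's multiplicities at $I$ and $I^{2c}$ in degree $r$ as the $t^r$-coefficients of $P \cdot Q_N$ and $P' \cdot Q_N$, respectively, and we seek $r$ with $((P'-P) \cdot Q_N)[r] = 0$ in $\Z$ and $(P \cdot Q_N)[r] \not\equiv 0 \pmod p$. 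Since $Q_N(0) = 1$ and $Q_N[n] = 1$ (as $N$ is closed, connected, orientable of dimension $n$), the top-degree choice $r = r_0 + n$ restricts the sum to $i \in [r_0, r_0 + n]$ and gives a leading contribution $m_{r_0}(I) \cdot 1 = l_0$ from $i = r_0$. The vanishing of $((P'-P) \cdot Q_N)[r]$ is then secured by possibly replacing $I$ by a nearby witness interval whose disagreement support $\{i : m_i(I) < m_i(I^{2c})\}$ is disjoint from $r - \mathrm{supp}(Q_N)$, a perturbation absorbed in the $\delta \to 0$ limit.

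The main obstacle is precisely this combinatorial control in the reverse inequality: an arbitrary witness interval may have disagreement degrees intersecting $r_0 + \mathrm{supp}(Q_N)$, producing asymmetric contributions to the stabilization near $r_0$. Handling this requires either perturbing $I$ within the set of witnesses realizing the supremum to clear the disagreement set from the support of $Q_N$ shifted by $r$, or invoking a combinatorial lemma on non-vanishing of products of polynomials with non-negative coefficients. This algebraic subtlety, rooted in the multiplicity-modulo-$p$ sensitivity of the invariant, is what distinguishes the present argument from the simpler proof of stabilization invariance of the $\Z_k$ spectral spread $w_{k,\al}$.
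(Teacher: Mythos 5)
Your first direction, $\mu_p(\phi) \geq \mu_p(\phi\times\id_N)$, is correct and is essentially verbatim the paper's proof of $c \geq c'$: starting from a witness $(r',I')$ for the stabilization, the termwise inequalities $m_{r'-j}(I') \leq m_{r'-j}((I')^{2c})$ together with equality of the $b_j$-weighted sums force termwise equality, and the condition $l' \not\equiv 0 \bmod p$ then localizes to some $M$-degree $r'-j_0$.

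The reverse direction has a genuine gap, which you flag yourself, and the missing idea is an extremality choice of the witness degree. The paper takes $r$ to be the \emph{minimal} degree with $\mu_{p,\zeta}(r,\phi) = c$ and works at the same product degree $r$: the K\"unneth decomposition of $\cB_r(\phi\times\id)_\zeta$ then involves only $M$-degrees $r, r-1, \ldots, r-\dim N$, all $\leq r$, and minimality forces $\mu_{p,\zeta}(r-i,\phi) < c$ for $i>0$, ruling out the dangerous case that some $\cB_{r-i}(\phi)_\zeta$ with $i>0$ contains both $I$ and $I^{2c}$ with the same multiplicity $\not\equiv 0 \bmod p$ (such a coincidence would be a witness in a strictly lower degree). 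Your ``top-degree'' variant $r = r_0 + n$, exploiting $\dim H_n(N;\K)=1$, is the mirror image of this and would work with $r_0$ chosen \emph{maximal}, but you impose no extremality on $r_0$, so the $M$-degrees strictly between $r_0$ and $r_0+n$ are uncontrolled --- exactly the obstruction your last paragraph describes. The polynomial formalism with $P,P',Q_N$ is a correct repackaging of the K\"unneth multiplicity count but does not supply the needed input, and the suggested ``perturb $I$ to clear the disagreement support'' is unsubstantiated. One may also note that even with extremality in hand the argument is delicate, since minimality only directly excludes equal multiplicities $\not\equiv 0\bmod p$ in the lower degrees, not unequal ones, and the paper's ``thus we are left with $i=0$'' is terse on this residual case; nonetheless, the primary omission in your proposal is the extremal choice of $r$.
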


\begin{proof} Clearly it is enough to prove that
\[\mu_{p,\zeta}(\phi) = \mu_{p,\zeta}(\phi \times \id_N),\]
for all $p$-th roots of unity $\zeta \neq 1.$

Put $c=\mu_{p,\zeta}(\phi) = \max_r \mu_{p,\zeta}(r,\phi),$ and $c' = \mu_{p,\zeta}(\phi \times \id_N).$ As carried out in the proof of Proposition \ref{Proposition: properties of the invariant}, Item \ref{itm:property 7}, we perturb $\id_N$ to the time-one map of the Hamiltonian flow of a $C^2$-small Morse function on $N,$ and argue up to a small $\delta>0.$ We omit these considerations herein.

We show first that $c \leq c'.$ Take the minimal $r$ with $\mu_{p,\zeta}(r,\phi)=c.$ Let $I \subset \R$ be an interval such that $m( \cB_r(\phi)_\zeta,I) = m( \cB_r(\phi)_\zeta,I^{2c})= l,$ where $l \neq 0 \mod p$. Consider $\mu_{p,\zeta}(\phi \times 1).$ By the Kunneth theorem in Floer homology, the barcode $\cB_r(\phi \times 1)_\zeta$ consists of $b_i=\dim H_i(N)$ copies of $\cB_{r-i}(\phi)_\zeta,$ $i=0,1,..., \dim N.$

Note that for $i>0$ none of $\cB_{r-i}(\phi)_\zeta$ contains both $I$ and $I^{2c}$ with equal multiplicities not divisible by $p$, since otherwise we get a contradiction to $r$ being the minimal degree where the maximum $c$ is attained.

Thus we are left with $i=0,$ and since $b_0=1,$ we have $l$ copies of $I$ and $I^{2c}$
in the corresponding piece of the barcode $\cB_r(\phi \times 1)_\zeta.$ It follows that

\[\mu_{p,\zeta}(\phi\times \id) \geq \mu_{p,\zeta}(r, \phi \times \id) \geq c = \mu_{p,\zeta}(\phi).\]

In the other direction, we show that $c \geq c'.$ Assume that $m(\cB_r(\phi \times \id)_\zeta,I) = m(\cB_r(\phi \times \id)_\zeta,I^{2c'}) = l,$ where $l \neq 0 \mod p$. Define $S = \{i\;|\; b_i \neq 0\}.$ For each $i \in S$ put $m_i,m'_i$ for the multiplicities of $I$ and $I^{2c'}$ in $\cB_{r-i}(\phi)_\zeta.$

We claim that $m_i = m'_i$ for all $i \in S.$ Indeed, by definition of the multiplicity function, $m_i \leq m'_i.$ Hence the claim follows by the identities \[l = m(\cB_r(\phi \times \id)_\zeta,I) = \sum_{i \in S} m_i b_i,\] \[l = m(\cB_r(\phi \times \id)_\zeta,I^{2c'}) = \sum_{i \in S} m'_i b_i.\]  To conclude the proof, we note that $l = \sum m_i b_i \neq 0 \mod p,$ and hence there exists $i_0$ with $m_{i_0} \neq 0 \mod p.$ Hence \[\mu_{p,\zeta}(\phi) \geq \mu_{p,\zeta}(r-i_0, \phi) \geq c' = \mu_{p,\zeta}(\phi \times \id).\]
\end{proof}

\section{Hamiltonian egg-beater and the proof of the main results}\label{Section: example}

Here we prove the following statement and show how to deduce Theorems \ref{thm-main} and \ref{thm-main-squares} from it.

\medskip
\begin{prop}\label{Proposition: example}
On a surface $\Sigma$ of genus $g \geq 4,$ there exists a family $\{\phi_\lambda\},$ for $\la$ in an unbounded increasing sequence in $\R$, of Hamiltonian diffeomorphisms of $\Sigma,$ and a family of primitive classes $\alpha_\lambda \in \pi_0(\cL \Sigma)$ such that for $\la$ large, $(\phi_\la)^p$ has exactly $2^{2p}$ $p$-tuples $\{z,\phi_\la(z),\ldots,(\phi_\la)^{p-1}(z)\}$ of (primitive) non-degenerate fixed points in class $\al_\la,$ with action differences \[|\cA(z)-\cA(z')| \geq c\cdot \lambda + O(1),\] for each $z,z'$ belonging to different $p$-tuples, as $\lambda \to \infty$ for a certain constant $c>0.$
\end{prop}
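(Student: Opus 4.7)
The plan is to construct $\phi_\la$ explicitly as a smoothed Hamiltonian egg-beater on $\Sigma$ and then count its primitive $p$-periodic orbits in a carefully chosen free homotopy class using symbolic dynamics. First, I would fix two long thin embedded annuli $V_1, V_2 \subset S^2$, each equipped with symplectic $(x,y)$-coordinates, arranged so that their intersection consists of four disjoint squares $Q_{ij}$, $i,j \in \{0,1\}$. On each $V_q$ pick a mean-zero Hamiltonian $h_q(y)$ supported in a slightly smaller sub-annulus, with profile the tent function of Figure \ref{Figure: graph of u_0}, whose two linear branches have opposite slopes. The time-$\la$ map $f^{(q)}_\la$ of $\la h_q$ is then a piecewise linear shear on $V_q$, extended by the identity. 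Set $\phi_\la := f^{(2)}_\la \circ f^{(1)}_\la$. Finally, insert handles into four suitably chosen connected components of $S^2\setminus(V_1\cup V_2)$ to obtain a closed surface $\Sigma$ of genus at least $4$, and extend $\phi_\la$ by the identity across the handles; a small smoothing of the tent corners makes $\phi_\la$ a genuine smooth Hamiltonian diffeomorphism.

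Next I would enumerate fixed points of $\phi_\la^p$ in a primitive free homotopy class $\al_\la$ via symbolic dynamics. For $\la$ large and lying in an arithmetic progression chosen to avoid the finitely many resonances at which two branches of the return map would collapse, a twist estimate shows that any $p$-periodic orbit of $\phi_\la$ whose trajectory enters the chaotic zone $V_1\cap V_2$ must cross one of the four squares at each of the $p$ applications of $\phi_\la$, and record at each step a binary choice of branch in both shears $f^{(1)}_\la$ and $f^{(2)}_\la$. This produces $2p$ independent binary symbols and hence $2^{2p}$ combinatorial patterns of length $p$; a linearised twist computation shows that for large $\la$ each pattern is realised by a unique non-degenerate fixed point of $\phi_\la^p$.

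The class $\al_\la$ is then specified by the handles. I would place them so that every $\phi_\la$-trajectory realising one of the above symbolic patterns is forced to wind around a prescribed sequence of handle generators (whose winding numbers depend on $\la$ only through the integer twist parameter), while all other $\phi_\la^p$-fixed points in the complement of the chaotic zone, in particular fixed points of $\phi_\la$ itself and orbits trapped near a single handle, represent a different class. Choosing the handle pattern not to be a proper power guarantees primitivity of $\al_\la$, so non-primitive period-$p$ orbits (which exist as iterates of $\phi_\la$-fixed points) are automatically excluded. The symbolic count then descends to exactly $2^{2p}$ $\phi_\la$-orbits, equivalently $p$-tuples, of primitive non-degenerate fixed points of $\phi_\la^p$ representing $\al_\la$. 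The action estimate follows by unwrapping $\cA(z)=\int_0^1 H_t(z(t))\,dt-\int_{\bar z}\o$ on such an orbit: the Hamiltonian integral decomposes into a sum $\sum_{k=0}^{p-1}\la\bigl(h_1(y_k^{(1)})+h_2(y_k^{(2)})\bigr)$, while the capping term is bounded uniformly in $\la$ once a reference capping for $\al_\la$ is fixed. Because the tent profile takes distinct values on its two branches, changing a single symbolic entry shifts one summand by a nonzero multiple of $\la$, yielding $|\cA(z)-\cA(z')|\geq c\la+O(1)$ for a constant $c>0$ depending only on the tent profile.

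The main obstacle will be the topological bookkeeping in the third step: the handles must simultaneously make $\al_\la$ primitive, be traversed in the same pattern by every one of the $2^{2p}$ symbolic orbits regardless of its branch sequence, and exclude every spurious $\phi_\la^p$-fixed point from $\al_\la$ (including orbits confined near a single handle or fixed points of $\phi_\la$ in the chaotic zone). Point three is the most delicate and is the reason one takes the genus of $\Sigma$ to be at least $4$: the extra handles provide enough room to separate contributing from non-contributing orbits and to keep the count of primitive $p$-tuples in $\al_\la$ independent of $\la$ along the chosen sequence.
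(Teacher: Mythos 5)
The symbolic-dynamics count via the binary branch choices (giving $2^{2p}$) and the action estimate via the tent-profile asymmetry are both in the spirit of the paper's argument; however there is a genuine gap in how you propose to specify and police the class $\alpha_\lambda$, and it is fatal as written.

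In your step three you want the handles to carry the free-homotopy data: you say each symbolic orbit is "forced to wind around a prescribed sequence of handle generators" and that spurious orbits land in a different class because they miss the handles. But the map $\phi_\lambda$ is supported entirely inside the two annuli, which are by construction disjoint from the handles; a periodic orbit of $\phi_\lambda$ therefore never enters a handle, and its trajectory loop cannot pick up any winding around a handle generator. In the paper the handles play only a passive topological role: they make the inclusion $\iota: C = C_V\cup C_H \hookrightarrow \Sigma$ incompressible (injective on $\pi_1$ and on $\pi_0(\cL)$). The class $\alpha_\lambda$ itself is written entirely in the cores $\oa,\ob$ of the two annuli, $\alpha_\lambda = \oa^{m_1}\ob^{n_1}\cdots\oa^{m_p}\ob^{n_p}/\conjugation$ with $m_j,n_j\sim\lambda$, and the exclusion of spurious orbits is achieved not by handle placement but by the combinatorial fact that $\alpha_\lambda$ is a cyclically reduced word containing only $\oa,\ob$ (no $\oc$), together with a case analysis (paper's Lemma \ref{Lemma: fixed points in A}) showing every orbit in that class must bounce between the two intersection squares in the pattern $A\to A\to\cdots$. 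Primitivity likewise comes from choosing the exponent pairs $(m_j,n_j)$ pairwise distinct, not from a condition on handle windings. Your geometric setup is also off: the paper's annuli meet in two squares $A,B$ (so $\pi_1(C)$ is free on three generators), not four squares $Q_{ij}$; with four intersection squares the fundamental-group bookkeeping, the table of fundamental-groupoid types, and the fixed-point localisation argument would all have to be redone and you have not shown the count $2^{2p}$ survives. To repair the proposal you would need to (i) drop the handle-winding mechanism entirely and encode $\alpha_\lambda$ as a word in annulus cores with $\lambda$-dependent exponents, (ii) prove a localisation lemma restricting orbits in $\alpha_\lambda$ to a single intersection square, and (iii) replace the vague "twist estimate" by the explicit linear fixed-point equations in the universal cover.
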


\begin{rmk}
It sounds likely that by slightly modifying our construction one can prove an analogue of Proposition \ref{Proposition: example} and hence of Theorems \ref{thm-main} and \ref{thm-main-squares}, in the case when the genus of the surface $\Sigma$ is $g \geq 2.$
\end{rmk}

\subsection{Proof of Theorems \ref{thm-main} and \ref{thm-main-squares}}
By Proposition \ref{itm:property 5}, Proposition \ref{Proposition: example} implies that \[w_{p,\alpha_\lambda}(\phi_\lambda) \geq {c}\cdot \lambda + O(1),\] as $\lambda \to \infty,$ for a constant $c>0.$ Consequently, Proposition \ref{Proposition: properties of the invariant}: \ref{itm:property 3}, \ref{itm:property 4}, and \ref{itm:property 7} yield Theorem \ref{thm-main}.

\medskip


Further, among the $2^{2p}$ $p$-tuples of fixed points of $\phi_\lambda^p$ in the class $\alpha_\la$
choose the $p$-tuple, say $\{z,\phi_\lambda(z),\ldots, (\phi_\la)^{p-1}(z)\}$ with the minimal action. Let $r$ be the index of $z.$
Fix a small $\varepsilon > 0$ and choose the interval \[I_{\varepsilon,\la} = (\cA(z)+\frac{1}{2}\varepsilon \cdot \la,\cA(z)+(c-\frac{1}{2}\varepsilon) \cdot \la]\] of length \[|I_{\varepsilon,\la}| = (c-\varepsilon)\la> 4 \cdot \frac{\la}{4} (c-2\varepsilon)\;.\]

For a primitive $p$-th root of unity $\zeta,$ we claim that the multiplicity of the interval $I_{\varepsilon,\la}$ in the barcode of the $\zeta$-eigenspace $L_r(\phi_\lambda)_\zeta$ of the {$\Z_p$ persistence module} $V_r(\phi)$ equals $1$:
\[m(\cB_r(\phi_\la),I_{\varepsilon,\la}) = 1\]
Indeed, when $\la$ is large, the $\zeta$-eigenspace $(L_r(\phi_\la)_\zeta)_t$ for all \[\cA(z)<t < \cA(z) + (c-\varepsilon) \cdot \la\] is one-dimensional, and is in fact spanned by $\sum_{j=0}^{p-1}\zeta^{-j}[(\phi_\la)^j(z)].$ Moreover also \[m(\cB_r(\phi_\la),I_{\varepsilon,\la}^{\frac{\la}{2}(c-2\varepsilon)})=1.\]

By the definition of the multiplicity-sensitive spread, we conclude that $\mu_p(\phi_\lambda) \geq \la (c-2\varepsilon )/4$, and hence by Theorem \ref{thm-mu2-squares},
$d(\phi_\lambda, \Pow_p) \geq  \la (c-2\varepsilon )/4p.$
This, together with Theorem \ref{thm-mu2-stabilization}, proves Theorem \ref{thm-main-squares}.

\qed

The rest of this section is dedicated to the proof of Proposition \ref{Proposition: example}.

\subsection{Topological set up}
We begin by describing the topological setting of the construction. To this end consider the union of two annuli $C_V$ and $C_H$ in the standard $\R^2,$ each symplectormophic to \[C_*=[-1,1] \times \R/L\Z,\]  for a number $L\geq 4,$ intersecting at two squares $A, B.$ Here the subscripts $V$ and $H$ stand for "vertical" and "horizontal", as seen from the square $A$ - compare Figure \ref{fig:two-annuli}. Then consider this configuration of annuli as being symplectically embedded in $S^2$ and glue at least one handle into each connected component. This gives the surface $\Sigma$ of genus $\geq 4.$

More precisely, consider the cylinder $C_*$ with coordinates $x,y$ and standard symplectic form $dx\wedge dy.$ Take two copies of this cylinder, denoted by $C_V$ and $C_H.$ Consider the squares $S_0=[-1,1]\times [-1,1]/L\Z$ and $S_1=[-1,1] \times [L/2-1,L/2+1]/L\Z.$ This gives us four squares $S_{V,0},S_{V,1} \subset C_V$ and $S_{H,0}, S_{H,1} \subset C_H.$ Consider the symplectomorphism $VH_{0,1}: S_{V,0} \sqcup S_{V,1} \to S_{H,0} \sqcup S_{H,1},$ given by $VH \sqcup VH',$ where $VH:S_{V,0} \to S_{H,0}$ is given by $VH(x,[y])=(-y,[x]),$ and $VH':S_{V,1} \to S_{H,1}$ is given by $VH'(x,[y]) = (y-L/2, [-x+L/2]).$
Glue the two cylinders along $VH_{0,1}$ to obtain the following surface with boundary \[C:=C_V \cup_{VH_{0,1}} C_H.\] Next we consider a symplectic embedding $\iota_0: C \hookrightarrow S^2$ of $C$ into $S^2$ (this can be seen by first embdedding it symplectically into $\R^2$ with the standard symplectic structure inside a large ball and then composing with an embedding of that ball into $S^2$, or alternatively embed $C$ as the union of tubular neighbourhoods of two orthogonal great circles in $S^2$), and identify $C$ with the image of the embedding.

Note that $S^2 \setminus C$ has $4$ connected components. We glue at least one handle into each of these components, to obtain a surface $\Sigma$ of genus $g \geq 4,$ with a symplectic embedding \[\iota: C \hookrightarrow \Sigma.\]

\begin{figure}[htb]
\centering
\begin{minipage}{.5\textwidth}
  \centering
  \includegraphics[height = 2in, width=.67\linewidth]{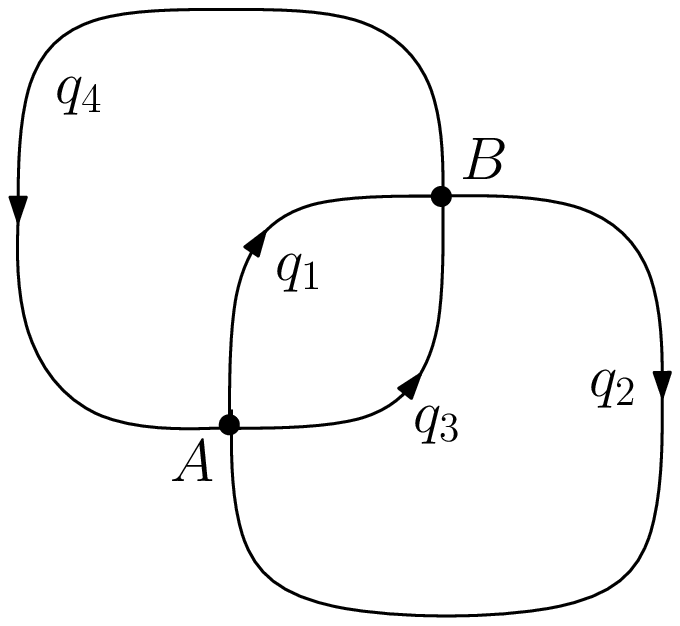}
  \caption{The graph $\Gamma$}
  \label{fig:the-graph}
\end{minipage}%
\begin{minipage}{.5\textwidth}
  \centering
  \includegraphics[height = 2in, width=.67\linewidth]{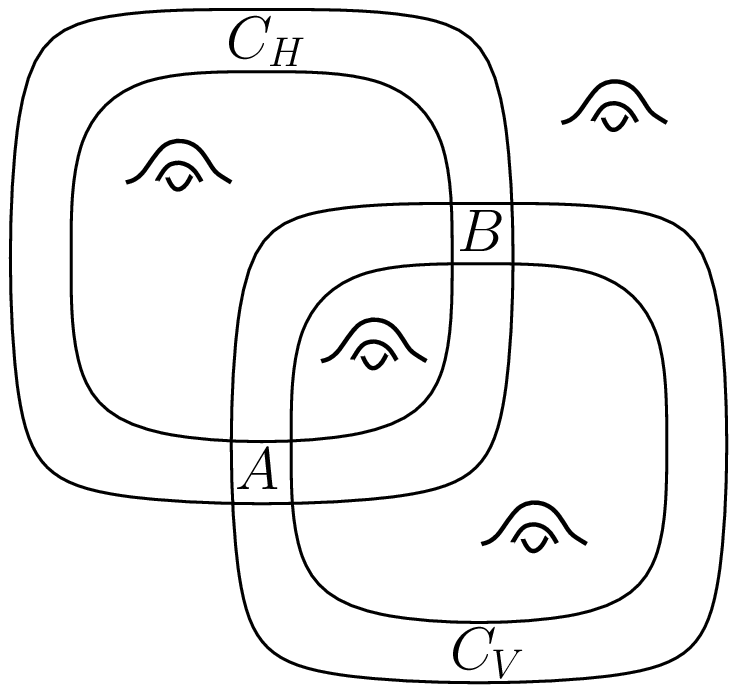}
  \caption{$C$ embedded in $\Sigma$}
  \label{fig:two-annuli}
\end{minipage}
\end{figure}

This embedding is {\em incompressible}, that is it induces an injective map on $\pi_1.$ Moreover, by considering geodesic representatives of free homotopy classes of loops in $\Sigma$ with respect to a well-chosen hyperbolic metric, which renders the components of the boundary of $\iota(C)$ geodesic, one sees that this embedding also induces an injection $\pi_0(\cL C) \to \pi_0(\cL \Sigma).$

We continue describe the general form of the class $\al_\la$ in the image of the injection $\pi_0(\cL C) \to \pi_0(\cL \Sigma).$ It is sufficient to describe its preimage in $\pi_0(\cL C) \cong \pi_1(C)/\conjugation.$

Note that $\pi_1(C) \cong \pi_1(\Gamma),$ for a graph $\Gamma$ underlying the oriented graph consisting of two vertices $A,B$ corresponding to the squares along which the images of $C_V$ and $C_H$ in $C$ intersect (we denote theses squares by $A, B$ as well), and four edges: there are two edges $q_1,q_3$ from $A$ to $B,$ and two edges $q_2,q_4$ from $B$ to $A$ - see Figure \ref{fig:the-graph}. We find it useful to orient these edges so that the concatenation $\oa=q_1 \# q_2$ corresponds to a generator of $\pi_1(C_V)$ based at a point in $A,$ and $\ob=q_3 \# q_4$ corresponds to a generator of $\pi_1(C_H)$ based at a point in $A.$ Specifically, we represent these two generators by the loops \[\gamma(a)= \{(0,[t]) \in C_V\} _{t \in \R/ L\Z}\] and \[\gamma(b)= \{(0,[t]) \in C_H\} _{t \in \R/ L\Z}\] based at the center \[0_A:=(0,[0]) \in A\] of $A.$ Put also $\oc= q_3 \# q_2.$ Since contracting the edge $q_2$ establishes a homotopy equivalence of $\Gamma$ with a bouquet $V_3$ of $3$ circles, with $\oa,\ob,\oc$ mapping to three generators of $\pi_1(V_3,[d])$ under the quotient map, we see that \[\pi_1(\Gamma,A) \cong \mathrm{Free}\langle \oa,\ob,\oc \rangle,\] the free group on $3$ generators.

The general form of the class we consider is \[\alpha = \oa^{m_1} \ob^{n_1} \oa^{m_2} \ob^{n_2} \cdot \ldots \cdot \oa^{m_p} \ob^{n_p} /\conjugation,\] for $m_1,n_1,m_2,n_2,\ldots, m_p, n_p \in \Z_{>0},$ to be specified later, such that for each $i<j,$ $(m_i,n_i) \neq (m_j,n_j),$ as elements in $\Z_{>0} \times \Z_{>0}.$ Put $\til{\alpha} = \oa^{m_1} \ob^{n_1} \oa^{m_2} \ob^{n_2} \cdot \ldots \cdot \oa^{m_p} \ob^{n_p}.$

\subsection{Constructing the egg-beater map}
We proceed to describe the dynamical system of this example. Fix a large parameter $\la \gg 1.$ We later constrain it to an unbounded increasing subsequence of $\R.$ We first construct the diffeomorphism $\phi_\lambda \in \Ham(\Sigma,\sigma),$ and based on its properties we choose an appropriate class $\al_\la$ in the image of the injection $\pi_0(\cL C) \to \pi_0(\cL \Sigma),$ and the constraints on $\la.$ 

\medskip

{\bf Notation:} For brevity we denote by $\varepsilon_s:=\sign(s) \in \{\pm 1\}$ the sign of $s \neq 0.$ For $s=0$ we define $\varepsilon_0 := 0.$
\medskip

We construct the Hamiltonian diffeomorphisms $\phi_\lambda$ as a small smoothing of a piecewise smooth homeomorphism. Consider the function $u_0:[-1,1] \to \R,$ given by \[u(s) = 1 - |s| = 1 - \varepsilon_s \cdot s.\]

\begin{figure}[htb]
\centerline{\includegraphics[height=1.5in]{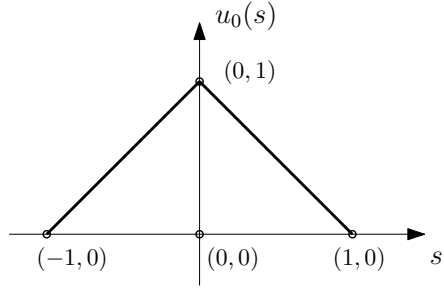}}
\caption{\label{fig:braid-closure1} Graph of $u_0$.}\label{Figure: graph of u_0}
\end{figure}

On the cylinder $C_*=[-1,1] \times \R/L\Z,$ consider the homeomorphism \[f_0=f_{0,\la}:C_* \to C_*,\]
\[f_0(x,[y]) = (x,[y+\la u_0(x)]).\]

For a smoothing $u \in C^\infty ([-1,1],\R)$ of $u_0$ with $\mathrm{supp}(u) \subset (-1,1),$ that is sufficiently $C^0$-close to $u_0$ and coincides with $u_0$ outside a sufficiently small neighbourhood of $U$ of $\{-1,0,1\},$ define the Hamiltonian diffeomorphism \[f=f_{\la}:C_* \to C_*,\]
\[f(x,[y]) = (x,[y+\la u(x)]).\]

It is easy to see by the continuous dependence of solutions to ODE in a fixed time interval that for purposes of computing fixed points of the diffeomorphisms constructed from $f$ below in the free homotopy classes of loops described below, one can effectively substitute $u$ by $u_0.$

It is easy to achieve, in addition, that $u$ is non-negative, even, and moreover such that \[\int_{-1}^{s}(u(\sigma)-u_0(\sigma)) d\sigma\] is supported in the small neighbourhood $U$ of $\{-1,0,1\}$ outside which $u$ coincides with $u_0$ - see e.g. Figure \ref{fig:graph-u-near-0-1}.

\begin{figure}[htb]
\centerline{\includegraphics[height=1.5in]{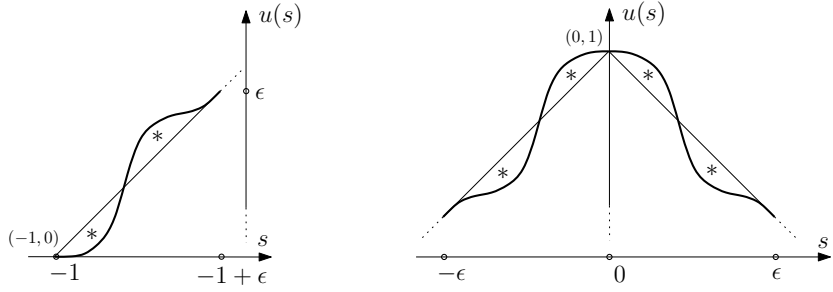}}
\caption{\label{fig:graph-u-near-0-1} Graph of $u$ near $-1$ (left) and $0$ (right). The areas marked $*$ are equal.}
\end{figure}

Note that $f^{-1}: C_* \to C_*$ is given by $f^{-1}(x,[y]) = (x,[y-\la u(x)]),$ and that $f$ is supported away from the boundary of $C_*.$ Moreover $f$ is a time-one map of an autonomous Hamiltonian flow $\{f^t\}_{t \in [0,1]}$ with Hamiltonian given by \[h(x,[y]) = h(x) = -\frac{1}{2} + \int_{-1}^{x}u(\sigma) d\sigma.\] We note that by our choice of smoothing $u,$ the function $h:[-1,1] \to \R$ is odd, and for all $s \in [-1,1] \setminus U,$ we have  \[h(s)=h_0(s),\] where \[h_0(s) = -\frac{1}{2} + \int_{-1}^{s}u_0(\sigma) d\sigma = s - \varepsilon_s \frac{s^2}{2}\] would be the Hamiltonian were we to consider the piecewise smooth homeomorphism $f_0$ as a Hamiltonian transformation. Note that $h_0$ is an odd function, and hence $h_0(s) = \vareps{s} h_0(|s|).$ 

The Hamiltonian symplectomorphism $f$ defines two Hamiltonian symplectomorphisms of $C,$ one supported on $C_V$ and one supported on $C_H,$ and both supported away from the boundary of $C,$ and hence by extension by $\id$ - two Hamiltonian symplectomorphisms of $\Sigma$ supported on $C,$ $f_V$ and $f_H$. Let us emphasize also that the Hamiltonian $h$ takes different values on the boundary components
of the annulus $C$. Nevertheless, it extends from $C_V$ and from $C_H$ to a smooth Hamiltonian on $\Sigma$
since these annuli separate $\Sigma$. In particular, $f_V$ and $f_H$ are genuine Hamiltonian diffeomorphisms of $\Sigma$.


For our fixed $\lambda \in \R,$ we define \[\phi_\lambda := f_{\la,V}\circ f_{\la,H}.\] Let $\{f^t_V\},\{f^t_H\}$ denote the Hamiltonian isotopies of $\Sigma$ (or interchangeably of $C$) with endpoints $f_V, f_H$ generated by the corresponding normalized autonomous Hamiltonians. Hence these are simply two copies of $\{f^t\}.$ We consider the Hamiltonian isotopy \[\{f^t_V\} \# \{f^t_H f_V \}\] generating $\phi_\la$ and the Hamiltonian isotopy \[\{f^t_V\} \# \{f^t_H f_V \} \# \ldots \# \{f^t_V (f_H f_V)^{p-1} (z)\} \#  \{f^t_H f_V (f_H f_V)^{p-1} \}\] generating \[(\phi_\la)^p = (f_H f_V)^p.\]

\subsection{Detecting periodic points}
We study the fixed points $z$ of $(\phi_\la)^p$ in the class $\al \in \pi_0(\cL \Sigma).$ In particular this means that \[ (\phi_\la)^p z = z\] and the free homotopy class of the orbit \[\gamma(z):=\{f^t_V(z)\} \# \{f^t_H f_V (z)\} \# \ldots \#  \{f^t_V (f_H f_V)^{p-1} (z)\} \# \{f^t_H f_V (f_H f_V)^{p-1} (z)\}\] satisfies \[[\gamma(z)] = \al.\]

\medskip

{\bf Terminology:} In what follows we refer to \[\{f^t_V(z)\}, \{f^t_H f_V (z)\},\ldots,  \{f^t_V (f_H f_V)^{p-1} (z)\}, \{f^t_H f_V (f_H f_V)^{p-1} (z)\}\] as the {\em intermediate paths} of the orbit $\gamma(z),$ and to their endpoints as the {\em intermediate points} of this orbit.
\medskip

We start solving the system of equations \begin{align}\label{Equation: fixed points in alpha 1}
&(\phi_\la)^p z =z, \\ \label{Equation: fixed points in alpha 2} &[\gamma(z)]_{\pi_0(\cL \Sigma)} = \al
\end{align}

by a topological analysis. Note that as all our Hamiltonian diffeomorphisms and flows are supported in $C$ and the embedding $\iota: C \to \Sigma$ is incompressible on both $\pi_1$ and free homotopy classes of loops, it is sufficient to restrict our consideration to $C.$ As $\pi_0(\cL C)= \pi_1(C)/\conjugation$ and as explained above $\pi_1(C) \cong \mathrm{Free}\langle \oa,\ob, \oc \rangle$ is a free group on three generators, we can reduce the second equation from $\pi_0(\cL C)$ to $\pi_1(C)$ using the following notion from combinatorial group theory.

A word in a free group on a set $X$ is {\em cyclically reduced} if all its cyclic conjugations are reduced. Equivalently, this word is reduced, and moreover if it is written cyclically, then the resulting cyclic word is also reduced.

It is a well-known result in combinatorial group theory that conjugacy classes of words in a free group $\Foabc$ are classified by cyclically reduced words in $\oa,\ob,\oc$ up to cyclic permutations (see e.g. \cite{CombinatorialGroupTheory}).

The word $\til{\alpha} = \oa^{m_1} \ob^{n_1} \oa^{m_2} \ob^{n_2} \cdot \ldots \cdot \oa^{m_p} \ob^{n_p}$ is clearly cyclically reduced, and any other word $v$ of the form $v=\oa^{k_1} \oc^{e_1} \ob^{l_1} \oc^{e_2} \ldots \oa^{k_p} \oc^{e_{2p-1}} \ob^{l_p} \oc^{e_{2p}}$ can be obtained from $\til{\al}$ by a cyclic permutation if and only if \[v = \widetilde{\al}^{(j)} = \oa^{m_{p-j+1}} \ob^{n_{p-j+1}} \ldots \oa^{m_p} \ob^{n_p} \oa^{m_1} \ob^{n_1} \ldots  \oa^{m_{p-j}} \ob^{n_{p-j}}\] for some $1 \leq j \leq p.$ Note that $\til{\al}^{(p)} = \til{\al}.$ Moreover for $k_1,l_1,\ldots, k_p,l_p \in \Z\setminus\{0\}$ the word $v$ is cyclically reduced, and is hence conjugate to $\til{\al}$ in exactly these $p$ cases.
Below we prove the following.

\begin{lma}\label{Lemma: fixed points in A}
All fixed points $z$ of $(\phi_\la) ^p$ in class $\al$ satisfy \[(\phi_\la)^j z \in A\] for all $0 \leq j \leq p.$ Moreover all the intermediate points of $\gamma(z)$ lie in $A.$
\end{lma}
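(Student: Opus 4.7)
The plan is to do a combinatorial analysis in $\pi_1(\Gamma)$ using the incompressibility of $\iota:C\hookrightarrow\Sigma$ and the monotone (positive-$y$) character of the two shear flows.

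First, $\phi_\la$ is supported in $C$, so any fixed point $z$ whose orbit $\ga(z)$ represents the nontrivial class $\al_\la$ must lie in $C$ (otherwise $\ga(z)$ is constant). Each intermediate path is then either an $f_V^t$-orbit contained in $C_V$ or an $f_H^t$-orbit contained in $C_H$, so $\ga(z)$ is a loop in $C$, and by the injectivity of $\iota_*:\pi_0(\cL C)\to\pi_0(\cL\Sigma)$ its class in $\pi_0(\cL C)$ equals $\al_\la$. Fixing a base point, view $[\ga(z)]$ as a conjugacy class in $\pi_1(C)\cong\pi_1(\Gamma)=\Foabc$.

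The next step locates the intermediate points on the graph. Each intermediate point is a transition between a $V$- and an $H$-path, so it must lie in $C_V\cap C_H=A\cup B$, i.e., at a vertex of $\Gamma$. I orient the four edges so that the $+y$-flow on $C_V$ traverses $q_1,q_2$ positively and the $+y$-flow on $C_H$ traverses $q_3,q_4$ positively; monotonicity of the shear flows then forces each $V$-syllable $V_i$ to be a \emph{positive} word in $q_1,q_2$ of one of the four shapes $(q_1q_2)^k$, $(q_1q_2)^kq_1$, $(q_2q_1)^k$, $(q_2q_1)^kq_2$, selected by the labels of the endpoint vertices, and similarly for each $H_i$ in $q_3,q_4$. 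Writing $\ga(z)=V_1H_1\cdots V_pH_p$, no cancellation can occur at any seam since the alphabets $\{q_1^{\pm 1},q_2^{\pm 1}\}$ and $\{q_3^{\pm 1},q_4^{\pm 1}\}$ are disjoint, so this concatenation is already cyclically reduced.

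Comparison with the cyclically reduced word $\til{\al}=(q_1q_2)^{m_1}(q_3q_4)^{n_1}\cdots(q_1q_2)^{m_p}(q_3q_4)^{n_p}$ then forces the two cyclic words to coincide. The alternating $V/H$ block pattern of $\ga(z)$ must align with the alternating $\oa/\ob$ block pattern of some cyclic rotation $\til{\al}^{(j)}$; by the classification reviewed in the excerpt this alignment is realized only by the $p$ rotations that shift by whole $\oa^{m_i}\ob^{n_i}$ pairs, since any other offset would require some $V_i$ or $H_i$ to be not a pure power of $\oa$ or $\ob$ and thus to contain a negative letter, contradicting positivity. Matching blocks yields $V_i=(q_1q_2)^{m_{i+j}}$ and $H_i=(q_3q_4)^{n_{i+j}}$ (indices mod $p$), so every $V_i$ and every $H_i$ both starts and ends at vertex $A$. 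Since the intermediate points of $\ga(z)$ are precisely $z, f_V(z), \phi_\la(z), f_V(\phi_\la(z)),\ldots,(\phi_\la)^{p-1}(z), f_V((\phi_\la)^{p-1}(z))$, both conclusions of the lemma follow.

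The main obstacle is the positivity analysis in the middle paragraph: one must choose orientations of $q_1,\ldots,q_4$ consistent with the shear-flow directions and check that no syllable collapses to the trivial element (which, for $\la$ sufficiently large and for configurations contributing in class $\al_\la$, is automatic). Once positivity is in hand, the combinatorial matching is rigid and excludes any intermediate point from landing in $B$.
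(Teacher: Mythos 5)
Your approach follows the paper's: reduce to the graph $\Gamma$, exploit positivity of the shear flows to restrict the shapes of the syllables, decompose $\gamma(z)$ into alternating $V$- and $H$-blocks, and match cyclic words. The bookkeeping is done in the groupoid alphabet $q_1,\dots,q_4$ rather than in the paper's generators $\oa,\ob,\oc$ of $\pi_1(\Gamma,A)$, but these are equivalent pictures.

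However, the justification of the crucial step is wrong. You claim that if some $V_i$ is not a pure power $(q_1q_2)^k$ — i.e.\ if a $B$-transition occurs — then $V_i$ ``would contain a negative letter, contradicting positivity.'' But all four shapes you list, $(q_1q_2)^k$, $(q_1q_2)^kq_1$, $(q_2q_1)^k$, $(q_2q_1)^kq_2$, are positive words in $q_1,q_2$; positivity of the shear flow is exactly what produced this list and cannot distinguish among its members. What actually excludes the latter three shapes is the literal form of $\til\al$'s blocks: as a cyclic word, the $V$-blocks of $\til\al$ are $(q_1q_2)^{m_i}$, each beginning with $q_1$ and ending with $q_2$, so a block of shape $(q_2q_1)^k$, $(q_1q_2)^kq_1$, or $(q_2q_1)^kq_2$ cannot coincide with any of them. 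Equivalently — and this is the route the paper takes — rewriting $\gamma(z)$ in the generators $\oa,\ob,\oc$ of $\pi_1(\Gamma,A)$, every $V$-to-$H$ or $H$-to-$V$ transition at the vertex $B$ produces a $\oc^{\pm 1}$ via the identity $q_1q_4=\oa\oc^{-1}\ob$, and $\oc$ does not appear in $\til\al$. The ``negative letter'' that does the work is thus $\oc^{-1}$, which reflects the position of the transition vertex, not a failure of positivity of the flow. A second, smaller gap: you never rule out an empty $V_i$ or $H_i$. If, say, some $V_i$ were empty, the neighboring $H$-blocks would merge (their concatenation is still reduced by positivity), leaving fewer than $p$ blocks of each type and contradicting $[\gamma(z)]=\alpha$; the paper records this as the conclusion $k_j,l_j>0$. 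Both gaps are repairable, and the overall plan is sound, but as written the pivotal ``only these $p$ rotations'' claim is not actually proved.
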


Therefore the system of Equations (\ref{Equation: fixed points in alpha 1}),(\ref{Equation: fixed points in alpha 2}) reduces to the $p$ systems of equations indexed by $1 \leq j \leq p.$

\begin{align}\label{Equation: fixed points in fund class alpha 1}
&(\phi_\la)^p z =z, \\ \label{Equation: fixed points in fund class alpha 2} &[\gamma(z)]_{\pi_1(C,0_A)} = \til{\al}^{(j)},
\end{align}

The proof of Lemma \ref{Lemma: fixed points in A} also shows the following statement. \begin{lma}\label{Lemma: bijection between systems of equations}
The map $z \mapsto \phi_\la z$ establishes a bijection from the set of solutions of the system of Equations (\ref{Equation: fixed points in fund class alpha 1}),(\ref{Equation: fixed points in fund class alpha 2}) for $j$ to the set of solutions of the system of Equations (\ref{Equation: fixed points in fund class alpha 1}),(\ref{Equation: fixed points in fund class alpha 2}) for $j+1.$ The inverse bijection is given by the map $z \mapsto (\phi_\la)^{p-1} z = (\phi_\la)^{-1} z.$
\end{lma}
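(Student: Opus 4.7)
The plan is to verify two things: (i) the map $z \mapsto z' := \phi_\la z$ sends solutions for $j$ to solutions for $j+1$, and (ii) $\phi_\la^{p-1}$ is the inverse bijection. Item (ii) is essentially automatic: the fixed-point equation $(\phi_\la)^p z = z$ is preserved by any iterate of $\phi_\la$, and $\phi_\la \circ \phi_\la^{p-1} = \phi_\la^p = \id$ on the $\phi_\la^p$-fixed-point set, so $\phi_\la$ and $\phi_\la^{p-1}$ are mutually inverse self-bijections of this set. The content therefore reduces to tracking the free-homotopy constraint (\ref{Equation: fixed points in fund class alpha 2}).

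I decompose $\gamma(z) = \gamma_1 \# \gamma_2 \# \cdots \# \gamma_p$, where
\[\gamma_i := \{f^t_V(\phi_\la^{i-1}z)\}_{t \in [0,1]} \# \{f^t_H f_V(\phi_\la^{i-1}z)\}_{t \in [0,1]}\]
runs from $\phi_\la^{i-1}z$ to $\phi_\la^i z$ (indices mod $p$). Replacing $z$ by $z' = \phi_\la z$ simply cyclically relabels the list of intermediate points, yielding $\gamma(z') = \gamma_2 \# \cdots \# \gamma_p \# \gamma_1$, the final factor $\gamma_1$ closing up from $\phi_\la^{p-1} z' = z$ back to $z' = \phi_\la z$. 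By Lemma \ref{Lemma: fixed points in A} all of the endpoints $\phi_\la^{i-1} z$ and $f_V(\phi_\la^{i-1}z)$ lie in the contractible square $A$, so inserting canonical connecting paths inside $A$ turns each $\gamma_i$ into a well-defined element $w_i \in \pi_1(C, 0_A)$, and
\[[\gamma(z)]_{\pi_1(C,0_A)} = w_1 w_2 \cdots w_p, \qquad [\gamma(z')]_{\pi_1(C,0_A)} = w_2 \cdots w_p w_1.\]

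Since the $V$-subarc of $\gamma_i$ lies in $C_V$ and the $H$-subarc in $C_H$, both with endpoints in $A$, each factor $w_i$ lies in the subgroup $\langle \oa, \ob\rangle \subset \pi_1(C,0_A)$ and has syllabic form $\oa^{k_i}\ob^{l_i}$. The equation $w_1 \cdots w_p = \til{\al}^{(j)}$ in the free group then pins down the right-hand sequence of syllables: $\til{\al}^{(j)}$ is cyclically reduced with exactly $2p$ maximal $\oa,\ob$-syllables, so if any $k_i$ or $l_i$ vanished the left-hand side would reduce to strictly fewer syllables, contradicting uniqueness of the reduced form. Thus $(k_i, l_i)$ is the $i$-th exponent pair of $\til{\al}^{(j)}$, and rereading the cyclically shifted product $w_2 \cdots w_p w_1$ produces precisely $\til{\al}^{(j+1)}$, which is the class condition for $z'$ to be a solution for index $j+1$. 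The only subtlety in the argument is the bookkeeping of the cyclic shift convention; once Lemma \ref{Lemma: fixed points in A} has forced the intermediate points into $A$, the bijection is a purely combinatorial statement about cyclic permutations of a cyclically reduced word in a free group.
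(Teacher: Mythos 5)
Your proof is correct and takes essentially the same approach the paper has in mind: the paper gives no separate argument, noting only that the proof of Lemma \ref{Lemma: fixed points in A} ``also shows'' this, and your write-up supplies exactly that elaboration --- once Lemma \ref{Lemma: fixed points in A} confines all intermediate points to $A$, each $w_i$ is $\oa^{k_i}\ob^{l_i}$ with $k_i,l_i>0$ (by syllable-counting against the cyclically reduced $\til{\al}^{(j)}$), and the map is the obvious cyclic shift on the factored word, with inverse $\phi_\la^{p-1}$ since $\phi_\la^p=\id$ on $\mathrm{Fix}(\phi_\la^p)$. One small bookkeeping remark, harmless but worth noting: with the paper's explicit formula $\til{\al}^{(j)}=\oa^{m_{p-j+1}}\ob^{n_{p-j+1}}\cdots$, the shift $w_1\cdots w_p\mapsto w_2\cdots w_p w_1$ actually sends $\til{\al}^{(j)}$ to $\til{\al}^{(j-1)}$ rather than $\til{\al}^{(j+1)}$; this matches the paper's stated lemma only up to reversing the cyclic labeling, and since any cyclic shift is a bijection of $\{1,\dots,p\}$ the downstream conclusion (solutions come in $p$-tuples meeting each index once) is unaffected.
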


Therefore the set of solutions of the system of Equations (\ref{Equation: fixed points in alpha 1}),(\ref{Equation: fixed points in alpha 2}) consists of $p$-tuples \[\{z,\phi_\la z,\ldots, (\phi_\la)^{p-1}z \}\] where $z$ is a fixed point of $(\phi_\la)$ with $[\gamma(z)]_{\pi_1(C,0_A)} = \til{\al}.$ We remark that the fixed points $\{ (\phi_\la)^{j} z \}_{1 \leq j \leq p}$ have the same action and index values.

\begin{proof}[Proof of Lemma \ref{Lemma: fixed points in A}]
We note that if $[\gamma(z)]= \alpha$ then $z \in A \cup B.$ Indeed, if $z \in C_V \setminus A \cup B,$ then the path $\{f^t_V(z)\}$ is constant, in contradiction to the form of $[\gamma(z)].$ Similarly, if $z \in C_H \setminus A \cup B,$ then the path $\{f^t_H f_V (f_H f_V)^{p-1} (z)= f^{t-1}_H (f_H f_V)^{p} (z) = f^{t-1}_H (z)\}$ is constant, in contradiction to the form of $[\gamma(z)].$ Similarly, the intermediate points $$f_V(z), f_H f_V(z), f_V f_H f_V (z),\ldots, (f_H f_V)^{p-1}(z), f_V (f_H f_V)^{p-1}(z)$$ of the loop $\gamma(z)$ lie in $A \cup B.$

The following table summarizes the possible types of trajectories of $f^t_V$ and $f^t_H,$ that join $A \to A, A \to B, B \to A, B \to B.$ These correspond to morphisms in the fundamental groupoid of the graph $\Gamma.$ Note that we adopt a non-standard notation for composition of morphisms in the fundamental groupoid - for example we write $a= [q_1 \# q_2] = q_1 q_2$ instead of $a = q_2 q_1$ - to make the relation to geometry more intuitive. Let $m \in \Z_{\geq 0}$ be a generic notation for a non-negative integer, and $n \in \Z_{\geq 1}$ be a generic notation for a positive integer. The conditions $m \geq 0$ and $n-1 \geq 0$ in this table are consequences of the fact that $u$ is a non-negative function.

\renewcommand{\arraystretch}{1.5}

\begin{center}
    \begin{tabular}{| c | c | c | c |c|}
    \hline
     & $A \to A$ & $A \to B$ & $B \to A$ & $B \to B$ \\ \hline
    $f_V$ & $\oa^m$ & $\oa^{n-1} q_1$ & $q_2\oa^{n-1}$ & $q_2\oa^{n-1} q_1$ \\ \hline
    $f_H$ &  $\ob^m$ & $\ob^{n-1} q_3$ & $q_4\ob^{n-1}$ & $q_4 \ob^{n-1} q_3$\\ \hline

    \end{tabular}
\end{center}

This table together with relations defining $\oa,\ob,\oc$ together with $q_1 q_4= \oa \oc^{-1} \ob$ and the fact that $[\gamma(z)]$ is represented by a {\em composable} path of morphisms in the fundamental groupoid of the graph $\Gamma,$ shows that $[\gamma(z)]_{\pi_0(\cL C)}$ is of the form \[v=\oa^{k_1} \oc^{e_1} \ob^{l_1} \oc^{e_2} \oa^{k_2} \oc^{e_3} \ob^{l_2} \ldots \oa^{k_p} \oc^{e_{2p-1}} \ob^{l_p} /\conjugation,\] where $k_1,l_1,\ldots, k_p,l_p \in \Z_{\geq 0}.$  Hence if $[\gamma(z)]_{\pi_0(\cL C)} = \alpha,$ then in fact $k_1,l_1,\ldots, k_p,l_p \in \Z_{> 0},$ whence $v$ is cyclically reduced, $z \in A$ and by the properties of cycliclally reduced words, $[\gamma(z)]_{\pi_1(C) = \pi_1(\Gamma,A)} = \til{\al}^{(j)},$ for some $1 \leq j \leq p.$ Moreover, the intermediate points \[f_V(z), f_H f_V(z), f_V f_H f_V (z),\ldots , f_V (f_H f_V)^{p-1} (z)\] lie in $A.$
\end{proof}

It remains to study the solutions to the system of Equations (\ref{Equation: fixed points in fund class alpha 1}),(\ref{Equation: fixed points in fund class alpha 2}) for $j=p$ with the help of Lemma \ref{Lemma: fixed points in A}. Lemma \ref{Lemma: fixed points in A} and its proof imply that for any such solution $z$ the classes of the intermediate paths of $\gamma(z)$ in $\pi_1(C,A)$ are well-defined and in fact satisfy \begin{gather} [\{f^t_V(z)\}] = a^{m_1}, [\{f^t_H f_V (z)\}] = b^{n_1}, \nonumber \\  [\{f^t_V f_H f_V (z)\}] = a^{m_2}, [\{f^t_H f_V f_H f_V (z)\}] =b^{n_2}, \nonumber \\ \dots \nonumber \\ [\{f^t_V (f_H f_V)^{p-1} (z)\}] = a^{m_p}, [\{f^t_H f_V (f_H f_V)^{p-1}  (z)\}] = b^{m_p} \label{Equation: intermediate points} \end{gather}

Knowing this, we pass to the universal cover of $C_*$ and translate the sequence \[z \mapsto f_V(z) \mapsto f_H f_V (z) \mapsto f_V f_H f_V (z) \mapsto \ldots \mapsto f_V (f_H f_V)^{p-1} (z) \mapsto z\] of intermediate points in terms of explicit points in $(-1,1) \times (-1,1).$

To this end, for $m \in \Z,$ denote by \[r=r_m:(-1,1) \times \R \to (-1,1) \times \R,\] \[r(x,y) = (x, y- m\cdot L).\] the {\em reduction map}. Note that the images of $(x,y)$ and $r(x,y)$ in $C_* = [-1,1] \times \R/L \Z$ are equal, and that $r$ maps $(-1,1) \times (m\cdot L-1,m\cdot L+1)$ isomorphically to $(-1,1) \times (-1,1).$ Call $VH:S_{0,V} \to S_{0,H}, (x,[y]) \mapsto (-y,[x])$ and $HV:S_{0,H} \to S_{0,V}, (x,[y]) \mapsto (y,[-x])$ the {\em flip maps}. Using these maps we decode the equation $(\phi_\la)^p z = z$ for $z = (x_0,y_0),$ in the class $\til{\al},$  as \[(x_{2p},y_{2p}) = (x_0,y_0),\] where \[(x_1,y_1) = VH \circ r_{m_1} \circ \til{f}(x_0,y_0),\] \[(x_2,y_2) =HV \circ r_{n_1} \circ \til{f}  (x_1,y_1) = \]\[= HV \circ r_{n_1} \circ \til{f} \circ VH \circ r_{m_1} \circ \til{f}(x_0,y_0),\] and in general for $0 \leq j \leq p-1$ \[(x_{2j+2},y_{2j+2}) =HV \circ r_{n_{j+1}} \circ \til{f}  (x_{2j+1},y_{2j+1}) = \]\[= HV \circ r_{n_{j+1}} \circ \til{f} \circ VH \circ r_{m_{j+1}} \circ \til{f}(x_{2j},y_{2j}).\]



See the diagram in Figure \ref{Figure: diagramp} and an illustration in Figure \ref{Figure: decoding}.  Here $\widetilde{f}$ is the lift of $f$ to the universal cover $\til{C}_*  \cong [-1,1] \times \R,$ given by the Hamiltonian flow $\{f^t\}$ of the Hamiltonian $h.$

The sequence of points \begin{equation}\label{Equation: intermediate points 2}
(x_0,y_0) \mapsto (x_1, y_1) \mapsto ... \mapsto (x_{2p-1},y_{2p-1}) \mapsto (x_{2p},y_{2p}) = (x_0, y_0)
\end{equation} corresponds the sequence of intermediate points (\ref{Equation: intermediate points}).

\begin{figure}[htb]
\centerline{\includegraphics[height=3in]{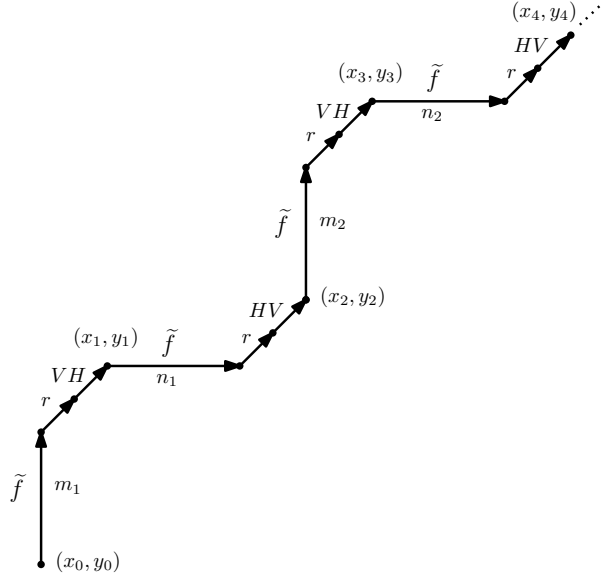}}
\caption{\label{fig:braid-closure1} Diagram for the equation $(\phi_{\la})^{p} (x_0,y_0) = (x_0,y_0)$.}\label{Figure: diagramp}
\end{figure}

\begin{figure}[htb]
\centerline{\includegraphics[height=3in]{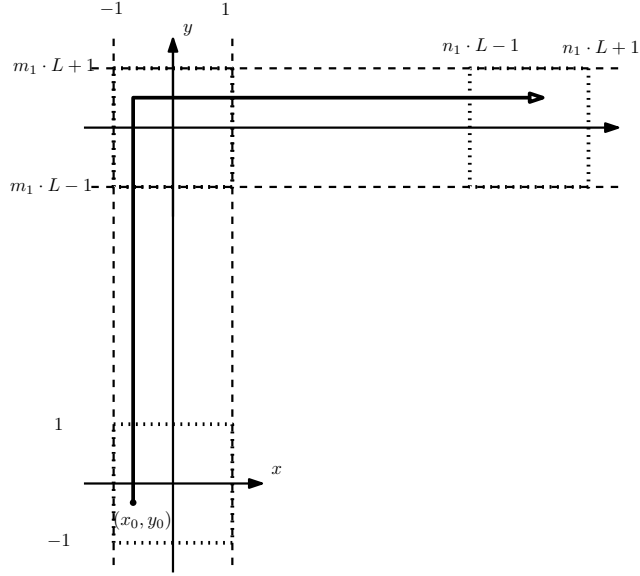}}
\caption{Decoding the equation via diagram.}\label{Figure: decoding}
\end{figure}

The equation $(x_{2p},y_{2p}) = (x_0,y_0)$ gives the following system of equations (\ref{Equation:1 for fixed point x}),(\ref{Equation:2 for fixed point x}). 

For $1 \leq j \leq p,$ set \[n_j = \frac{\nu_1}{L} \la, m_j = \frac{\mu_1}{L} \la.\]

We will choose certain $\{\nu_j,\mu_j\in (0,1)\},$ with $\frac{\kappa}{\kappa'} \in \Q$ for every $\kappa,\kappa' \in \{\nu_j,\mu_j\}$ and fix them. We consider $\la$ with the condition that $n_j = \frac{\nu_j}{L} \la, m_j = \frac{\mu_j}{L} \la \in \Z_{>0}$ for all $1\leq j \leq p.$ By construction there are arbitrarily large $\la$ with this property, and each such choice of $\la$ determines a primitive free homotopy class $\al_\la.$

Consider the composition \[HV \circ r_{\nu\la} \circ \til{f} \circ VH \circ r_{\mu\la} \circ \til{f}.\] Whenever $(x,y)$ is such that \begin{gather}
\til{f}(x,y) \in (-1,1) \times (\mu \la-1,\mu \la+1), \label{Equation: condition 1}\\
\til{f}(VH \circ r_{\mu\la} \circ \til{f}(x,y)) \in (-1,1) \times (\nu \la-1,\nu \la + 1), \label{Equation: condition 2}
\end{gather} the image $HV \circ r_{\nu\la} \circ \til{f} \circ VH \circ r_{\mu\la} \circ \til{f}(x,y) \in (-1,1)^{2}$ of $(x,y)$ under this composition is given by the formula \[\Phi^{\mu,\nu}_\la(x,y) = (x + \la u(y + \la u(x) - \mu \la) - \nu \la,\, y + \la u(x) - \mu \la).\] Moreover, whenever $\mu\la,\nu\la \in \Z,$ \[pr_{C_V}(HV \circ r_{\nu\la} \circ \til{f} \circ VH \circ r_{\mu\la} \circ \til{f}(x,y))= \phi_\la(x,y)\] where $pr_{C_V}:\til{C}_V \to C_V$ is the natural projection. Clearly $pr_{C_V}$ maps $(-1,1)^2$ isomorphically onto $A.$

Note that by Equation \eqref{Equation: intermediate points} the intermediate points of the trajectory of a solution in class $\al$ satisfy these conditions. Hence the even itermediate points satisfy the system of equations:
\begin{align}\label{Equation:1 for fixed point x}
(x_{2j+2},y_{2j+2}) &= \Phi^{\mu_{j+1},\nu_{j+1}}_\la (x_{2j},y_{2j}),\; 0\leq j \leq p-1\\
\label{Equation:2 for fixed point x} (x_{2p},y_{2p}) &= (x_0,y_0).
\end{align}

In fact $j$ can be considered modulo $p$ in these equations, emphasizing their cyclic nature. Then the equation for $(x_0,y_0)$  can be written as

\begin{equation}\label{Equation: for x0 y0}
\Phi_\la^{\mu_p,\nu_p} \circ \ldots \circ \Phi_\la^{\mu_1,\nu_1} (x_0,y_0) = (x_0,y_0),
\end{equation}

and for $(x_{2j_0},y_{2j_0})$ in general, as

\begin{equation}\label{Equation: for x0 y0 with index j_0}
\Phi_\la^{\mu_{j_0+p},\nu_{j_0+p}} \circ \ldots \circ \Phi_\la^{\mu_{j_0},\nu_{j_0}} (x_{2j_0},y_{2j_0}) = (x_{2j_0},y_{2j_0}),
\end{equation}

We remark that considering only the even intermediate points we do not lose data. Indeed any odd intermediate point $(x_{2j+1},y_{2j+1}),\; 0 \leq j \leq p-1$ satisfies \begin{equation}\label{Equation: odd via even} (x_{2j+1},y_{2j+1}) = (-y_{2j+2},x_{2j}).\end{equation}

We claim that for $\la \gg 1,$ given any $\overrightarrow{\varepsilon} = (\vareps{1},\vareps{2},\ldots,\vareps{2p}) \in \{\pm 1\}^{2p},$ there exists a unique solution \[\{(x_{2j},y_{2j})\}_{j=0}^{p-1} = (z_{\overrightarrow{\varepsilon}}, (\phi_\la) z_{\overrightarrow{\varepsilon}},\ldots,  (\phi_\la)^{p-1} z_{\overrightarrow{\varepsilon}} )\] of the system of equations (\ref{Equation:1 for fixed point x}),(\ref{Equation:2 for fixed point x}) such that \[\{\displaystyle{\varepsilon_{x_{2j}},\varepsilon_{y_{2j}}}\}_{j=0}^{p-1} = \overrightarrow{\varepsilon},\] that is for each $0 \leq j \leq p-1,$ \[(\varepsilon_{x_{2j}},\varepsilon_{y_{2j}}) = (\vareps{2j+1},\vareps{2j+2}).\]

Assume $\{(x_{2j},y_{2j})\}_{j=0}^{p-1}$ is a solution with sign vector $\overrightarrow{\varepsilon}.$ Then using the definition $u_0(s) = 1 -\vareps{s} s,$ and the remark that all the solutions in class $\til{\al}$ must have coordinates away from the subset of $(-1,1)$ where $u$ differs from $u_0,$ we simplify Equation \eqref{Equation: for x0 y0} to the following linear equation in two variables $(x_0,y_0).$

Note that \[\Phi_\la^{\mu_{j+1},\nu_{j+1}}(x_{2j},y_{2j}) = A_{\la,j+1} (x_{2j},y_{2j}) + b_{\la,j+1},\] where for all $0 \leq j \leq p-1,$ \begin{equation}\label{Equation: def of A}  A_{\la,j+1} = \begin{pmatrix} 1 + \vareps{2j+4}\vareps{2j+1} \la^2 & -\vareps{2j+4}\la \\ -\vareps{2j+1}\la & 1 \end{pmatrix}\end{equation} and \begin{equation}\label{Equation: def of b} b_{\la,j+1} = (-\vareps{2j+4}(1-\mu_{j+1})\la^2 + (1-\nu_{j+1})\la, (1-\mu_{j+1}) \la).\end{equation}

Note that $\det( A_{\la,j+1}) = 1.$ For future use in Section \ref{Section: discussion}, we remark that in fact $A_{\la,j+1}$ is a product of $2$ parabolic matrices as follows: \begin{equation} \label{Equation: 2 parabolic matrices} A_{\la,j+1} = \begin{pmatrix} 1  & -\vareps{2j+4}\la \\ 0 & 1 \end{pmatrix} \begin{pmatrix} 1  & 0 \\ -\vareps{2j+1}\la & 1 \end{pmatrix}.\end{equation} We also record that \begin{equation}\label{Equation: Ak inverse on bk} (A_{\la,j+1})^{-1} b_{\la,{j+1}} = ((1-\nu_{j+1}) \la, (1-\mu_{j+1})\la + \vareps{2j+1}(1-\nu_{j+1}) \la^2),\end{equation} and in particular its coordinates are polynomials of degree at most $2$ in $\la.$

Equation \eqref{Equation: for x0 y0} simplifies to:
\begin{align}\label{Equation: for x0 y0 specific 1}
(A_{\la,p} \circ \ldots \circ A_{\la,1} - \id) (x_0,y_0) = - v_{\la,0}, \\ \label{Equation: for x0 y0 specific 2}
v_{\la,0} = \sum_{j=0}^{p-2} A_{\la,p} \circ \ldots \circ A_{\la,j+2} (b_{\la,j+1}) + b_{\la,p}.
\end{align}

We use the convention that the last summand $b_{\la,p}$ corresponds to the index $j=p-1.$ We claim that for $\la \gg 1,$ this equation is non-singular, and hence has a unique solution $z = z_{\overrightarrow{\varepsilon}} =(x_0, y_0).$ To it there corresponds a unique solution $\phi_\la^{j_0} (z_{\overrightarrow{\varepsilon}})$ of Equation \eqref{Equation: for x0 y0 with index j_0} with index $j_0.$ We require the following observation on the product matrices, similar to the ones involved in \eqref{Equation: for x0 y0 specific 1},\eqref{Equation: for x0 y0 specific 2}. Denote for $j \geq i,$ \[\overline{A}_{\la,j,i} = A_{\la,j} \circ \ldots \circ A_{\la,i},\] and \[\overline{A}_\la = \overline{A}_{\la,p,1} = A_{\la,p} \circ \ldots \circ A_{\la,1}.\]

\vspace{1.5 pt}
\begin{lma}\label{Equation: product matrix asymptotics}
Put $\overline{\varepsilon}_{j,i} = \prod_{k=i-1}^{j-1} \varepsilon_{2k+1}\varepsilon_{2k+4},$ where $\vareps{l}$ is considered with $l$ modulo $2p.$ Note that $\overline{\varepsilon}:= \overline{\varepsilon}_{p,1} =  \prod_{k=1}^{2p}\vareps{k}.$ Then for all $1 \leq i\leq j \leq p,$ \[ \overline{A}_{\la,j,i} = \begin{pmatrix*}[l] \overline{\varepsilon}_{j,i} \la^{2(j-i+1)} + \ldots & - \overline{\varepsilon}_{j,i}\vareps{2i-1} \la^{2(j-i+1)-1} + \ldots \\ - \overline{\varepsilon}_{j,i} \vareps{2j+2}\la^{2(j-i+1)-1} + \ldots & \overline{\varepsilon}_{j,i} \vareps{2i-1}\vareps{2j+2} \la^{2(j-i+1)-2} + \ldots \end{pmatrix*}, \] where the dots denote lower order terms in $\la.$ In particular \[\overline{A}_\la = \begin{pmatrix} \overline{\varepsilon} \la^{2p} + \ldots & - \overline{\varepsilon}\vareps{1} \la^{2p-1} + \ldots \\ -\overline{\varepsilon} \vareps{2}\la^{2p-1} + \ldots &  \overline{\varepsilon} \vareps{1}\vareps{2} \la^{2p-2} + \ldots \end{pmatrix}.\]
\end{lma}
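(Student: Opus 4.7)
The plan is to prove the general formula for $\overline{A}_{\la,j,i}$ by induction on $n = j-i+1$, the number of factors, and then obtain the expression for $\overline{A}_\la$ as the specialisation $j=p$, $i=1$.

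For the base case $n=1$, we have $\overline{A}_{\la,i,i}=A_{\la,i}$. Substituting $j=i$ into the claimed formula gives $\overline{\varepsilon}_{i,i}=\vareps{2i-1}\vareps{2i+2}$ and the matrix $\left(\begin{smallmatrix}\vareps{2i-1}\vareps{2i+2}\la^{2}+\ldots & -\vareps{2i+2}\la+\ldots \\ -\vareps{2i-1}\la+\ldots & 1+\ldots\end{smallmatrix}\right)$, which coincides with $A_{\la,i}$ as read off from \eqref{Equation: def of A} (recall that in \eqref{Equation: def of A} the index $j+1$ of the lemma corresponds to $i$, so $2j+1=2i-1$ and $2j+4=2i+2$). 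The base case is therefore immediate.

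For the inductive step, assume the formula for $\overline{A}_{\la,j,i}$ and write $\overline{A}_{\la,j+1,i}=A_{\la,j+1}\cdot \overline{A}_{\la,j,i}$. Set $B:=\overline{A}_{\la,j,i}$ and compute the four entries of the product, in each case comparing the degrees in $\la$ of the two summands. In all four entries, exactly one summand strictly dominates: the $(1,1)$ entry is led by $A_{11}B_{11}$, the $(1,2)$ entry by $A_{11}B_{12}$, the $(2,1)$ entry by $A_{21}B_{11}$, and the $(2,2)$ entry by $A_{21}B_{12}$. Tracking the degree one finds that each dominant term carries the correct leading power $\la^{2(j-i+2)-k}$ (with $k\in\{0,1,1,2\}$ respectively), so all that remains is to verify the leading coefficients.

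The coefficient verification rests on the recursion $\overline{\varepsilon}_{j+1,i}=\overline{\varepsilon}_{j,i}\cdot \vareps{2j+1}\vareps{2j+4}$ and the cancellations $\vareps{2j+4}^{2}=\vareps{2j+1}^{2}=1$. For instance, the $(2,1)$ entry has leading coefficient $-\vareps{2j+1}\overline{\varepsilon}_{j,i}$, while the prediction demands $-\overline{\varepsilon}_{j+1,i}\vareps{2(j+1)+2}=-\overline{\varepsilon}_{j,i}\vareps{2j+1}\vareps{2j+4}\cdot\vareps{2j+4}=-\overline{\varepsilon}_{j,i}\vareps{2j+1}$, which agrees; the other three entries are checked in precisely the same manner. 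The specialisation $j=p$, $i=1$ then yields the asymptotic formula for $\overline{A}_\la$. The work is entirely bookkeeping of signs and degrees, with no genuine obstacle — the only point requiring any care is to make sure that in each entry the correct summand is identified as dominant, which is why writing out all four entries explicitly is worthwhile.
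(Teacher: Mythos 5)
Your proof is correct and takes exactly the approach the paper indicates — the paper's proof reads in its entirety ``The proof of this lemma is a straightforward induction,'' and your write-up supplies precisely the details that are being left to the reader. The base case (reading off $A_{\la,i}$ from equation~\eqref{Equation: def of A} with the substitution $j+1\mapsto i$), the identification of the dominant summand in each of the four entries of the product $A_{\la,j+1}\cdot\overline{A}_{\la,j,i}$, and the sign bookkeeping via the recursion $\overline{\varepsilon}_{j+1,i}=\overline{\varepsilon}_{j,i}\,\vareps{2j+1}\vareps{2j+4}$ and $\vareps{l}^2=1$ are all accurate; I checked all four leading coefficients and degrees and they match the claimed formula, as does the specialisation to $j=p$, $i=1$ (using $\vareps{2p+2}=\vareps{2}$ mod $2p$).
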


The proof of this lemma is a straightforward induction. Now, since for every $2 \times 2$ matrix $A$ with $\det A = 1,$ one has $\det(A -\id) = 2 - \trace(A),$ we compute \[\det(\overline{A}_\la - \id) = 2 - \trace(\overline{A}_\la) = -\overline{\varepsilon} \la^{2p} + \ldots,\] where the dots denote lower order terms in $\la.$ In particular for $\la \gg 1,$ \[\det(\overline{A}_\la - \id) \neq 0,\] concluding the proof of the claim.

We now claim that the combined solution $\{(x_{2j},y_{2j})\}_{j=0}^{p-1}$ can moreover be shown directly to satisfy the following asymptotics as $\la \to \infty.$ For each $0 \leq j \leq p-1,$
\begin{equation}\label{Equation: asymptotics of solutions.}
(x_{2j},y_{2j}) = (\vareps{2j+1}(1-\mu_j),\vareps{2j+2}(1-\nu_{j-1})) + O(\frac{1}{\la}).
\end{equation}
In particular, $z = (x_0,y_0)$ lies in $(-1,1)^2.$ Moreover, by these asymptotics and Equation \eqref{Equation: odd via even}, for each $0\leq j \leq p-1,$ $(x_{2j},y_{2j})$ satisfies Conditions \eqref{Equation: condition 1},\eqref{Equation: condition 2} with respect to $(\mu_{j+1},\nu_{j+1}).$ Therefore $z$ is a solution of the system \eqref{Equation: fixed points in fund class alpha 1},\eqref{Equation: fixed points in fund class alpha 2} (for $j=p$) and its intermediate points are given by $\{(x_k,y_k)\}_{k=0}^{2p-1}$ which indeed lie in $(-1,1)^2.$ This together with Lemma \ref{Lemma: bijection between systems of equations} finishes the first, existence and uniqueness, part of Proposition \ref{Proposition: example}.

We proceed with the proof of these asymptotics. Our convention is that $\mu_j,\nu_j$ are considered with $j$ modulo $p,$ and $\vareps{2j+1},\vareps{2j+2}$ are considered modulo $2p.$ Hence for $j=0$ the required asymptotics are: \[(x_{0},y_{0}) = (\vareps{1}(1-\mu_1),\vareps{2}(1-\nu_{p})) + O(\frac{1}{\la}).\]

Note that for a $2 \times 2$ matrix $A$ with $\det(A) = 1$ and with $(A-1)$ invertible, one has the identity $(A-1)^{-1} = \frac{1}{\det(A-1)} (A^{-1} - 1).$ Hence by Equation \eqref{Equation: for x0 y0} \[-z = \frac{1}{\det(\overline{A}_\la-1)} ((A_{\la,1})^{-1} \circ \ldots \circ (A_{\la,p})^{-1} - 1) v_{\la,0} .\]

For $0 < j < p-1,$ the summand \[\overline{A}_{\la, p,{j+2}} (b_{\la,{j+1}})\] in $v_{\la,0}$ contributes the following summand in the expression for $(-z):$ \[ \frac{1}{\det(\overline{A}_\la-1)} ((\overline{A}_{\la,j,1})^{-1} \circ (A_{\la,j+1})^{-1}(b_{\la,j+1}) -  \overline{A}_{\la,p,j+2} (b_{\la,j+1})).\]

Since $\det(\overline{A}_\la-1)$ is a polynomial of degree $2p$ in $\la$ with non-trivial leading coefficient, the coordinates of $(\overline{A}_{\la,j,1})^{-1} \circ (A_{\la,j+1})^{-1}(b_{\la,j+1})$ are, by \eqref{Equation: Ak inverse on bk}, polynomials in $\la$ of degree at most $2j+2 \leq 2(p-2) + 2 = 2p - 2,$ and the coordinates of $\overline{A}_{\la,p,j+2} (b_{\la,j+1})$ are polynomials in $\la$ of degree at most $2(p - (j+2) + 1) + 2 = 2p - 2j \leq 2p - 2,$ this summand contributes $O(\frac{1}{\la^2})$ to the expression for $(-z).$

We are left with two terms in the sum, corresponding to $j=0$ and $j=p-1.$ First consider $j=0.$ The corresponding summand in the expression for $(-z)$ is \[ \frac{1}{\det(\overline{A}_\la-1)} ((A_{\la,1})^{-1} (b_{\la,1}) -  \overline{A}_{\la,p,2} (b_{\la,1}))\] \[= - \frac{1}{\det(\overline{A}_\la-1)} \overline{A}_{\la,p,2} (b_{\la,1}) + O(\frac{1}{\la^2}) =\] by Lemma \ref{Equation: product matrix asymptotics} \[= - \frac{1}{-\overline{\varepsilon}\la^{2p} + O(\la^{2p-1})} \begin{pmatrix*}[l] \overline{\varepsilon}_{p,2} \la^{2p-2} + \ldots & - \overline{\varepsilon}_{p,2}\vareps{3} \la^{2p-3} + \ldots \\ - \overline{\varepsilon}_{p,2} \vareps{2}\la^{2p-3} + \ldots &  \overline{\varepsilon}_{p,2} \vareps{3}\vareps{2} \la^{2p-4} + \ldots \end{pmatrix*} (b_{\la,1}) + O(\frac{1}{\la^2}) =\] by Equation \eqref{Equation: def of b} \[ = \frac{1}{\overline{\varepsilon}\la^{2p} + O(\la^{2p-1})} (-\overline{\varepsilon}_{p,2} \vareps{4} (1-\mu_1)\la^{2p} + O(\la^{2p-1}), O(\la^{2p-1})) = - (\vareps{1}(1-\mu_1),0) + O(\frac{1}{\la}).\]

Now consider $j=p-1.$ The corresponding summand in the expression for $(-z)$ is \[\frac{1}{\det(\overline{A}_\la-1)} ((A_{\la,p-1,1})^{-1} \circ (A_{\la,p})^{-1}(b_{\la,p}) -  b_{\la,p}) = \]  \[ = \frac{1}{\det(\overline{A}_\la-1)} (A_{\la,p-1,1})^{-1} \circ (A_{\la,p})^{-1}(b_{\la,p})  + O(\frac{1}{\la^2}).\]

Note that by Lemma \ref{Equation: product matrix asymptotics} \[(\overline{A}_{\la,p-1,1})^{-1} = \begin{pmatrix}  \overline{\varepsilon}_{p-1,1} \vareps{1}\vareps{2p} \la^{2p-4} +\ldots  &   \overline{\varepsilon}_{p-1,1}\vareps{1} \la^{2p-3} + \ldots \\ \overline{\varepsilon}_{p-1,1} \vareps{2p}\la^{2p-3} + \ldots  & \overline{\varepsilon}_{p-1,1} \la^{2p-2} + \ldots \end{pmatrix},\]

and by \eqref{Equation: Ak inverse on bk} \[(A_{\la,p})^{-1} b_{\la,p} = ((1-\nu_{p}) \la, (1-\mu_{p})\la + \vareps{2p-1}(1-\nu_{p}) \la^2).\]

Hence \[ \frac{1}{\det(\overline{A}_\la-1)} (A_{\la,p-1,1})^{-1} \circ (A_{\la,p})^{-1}(b_{\la,p})  + O(\frac{1}{\la^2}) = \] \[ = \frac{1}{-\overline{\varepsilon}\la^{2p} + O(\la^{2p-1})} (O(\la^{2p-1}), \overline{\varepsilon}_{p-1,1} \vareps{2p-1}(1-\nu_p) \la^{2p} + O(\la^{2p-1})) = - (0, \vareps{2} (1 - \nu_p)) + O(\frac{1}{\la}).\]

In conclusion, we have obtained the asymptotics $z = (\vareps{1} (1 - \mu_1),\vareps{2} (1 - \nu_p)) + O(\frac{1}{\la}),$ as required. The asymptotics for $(\phi_\la)^{j_0} z$ are obtained similarly, starting with Equation \eqref{Equation: for x0 y0 with index j_0} with index $j_0.$ This finishes the proof of the claim.

It is easy to see that these fixed points of $(\phi_\la)^p$ are non-degenerate. Indeed, given a solution $z_{\overrightarrow{\varepsilon}} = (x_0,y_0),$ we compute \[(D (\phi_\la)^p)_{(x_0,y_0)} -\id = \overline{A}_\la - \id,\] which we have previously shown to be non-singular for $\la \gg 1.$

\subsection{The end of the proof of Proposition \ref{Proposition: example}}
It remains to calculate the action gaps for the periodic orbits found above. 
Choose \[\eta_{\al_\la}:=\ga(\oa)^{\# m_1}\# \ga(\ob)^{\# n_1}\# \ldots \# \ga(\oa)^{\# m_p}\# \ga(\ob)^{\# n_p}\] as the reference loop in the class $\al_\la.$ Then for $\overrightarrow{\varepsilon} \in \{\pm 1\}^{2p}$ we calculate that the action of the orbit corresponding to $z_{\overrightarrow{\varepsilon}}$ is \begin{equation}\label{Equation: total action.}
\A(z_{\overrightarrow{\varepsilon}}) = \frac{\la}{2}\sum_{j=0}^{p-1} (\vareps{2j+1} (1-\mu_{j+1})^2 - \vareps{2j+4}(1-\nu_{j+1})^2) + O(1),\end{equation} as $\la \to \infty.$

Indeed each of the $2p$ terms corresponds to an intermediate path, and for example, using the asymptotics \eqref{Equation: asymptotics of solutions.} and the definition of $h_0(s),$ the first term is \[\A_1 = \int_0^1 \la h(f^t_V(x_0,y_0))\,dt - \int_{\displaystyle{\{{ f^t_V}(x_0,y_0)\}_{t \in [0,1]}}} \,xdy= \] \[ = \la h(x_0) -\la \mu_1 x_0 +O(1) = \vareps{x_0} (\la h(|x_0|) -\la \mu_1 |x_0|) + O(1)=\] \[= \la \vareps{x_0} ((1-\mu_1) - \frac{(1-\mu_1)^2}{2} - \mu_1 (1-\mu_1)) + O(1) = \frac{\la}{2}\vareps{x_0} (1-\mu_1)^2 + O(1).\]
Recall that in our conventions $\vareps{x_0} = \vareps{1},$ hence the term we just computed corresponds to the first summand for $j=0.$
There exist $\{\mu_j,\nu_j \in (0,1)\}_{1\leq j \leq p}$ with all pairwise ratios in $\Q,$ such that the coefficients \[\{\sum_{j=0}^{p-1} (\vareps{2j+1} (1-\mu_{j+1})^2 - \vareps{2j+4}(1-\nu_{j+1})^2)\}_{\overrightarrow{\varepsilon} \in \{\pm 1\}^{2p}}\] of $\frac{\la}{2}$ in Equation \eqref{Equation: total action.} are all different, and hence the action differences between the different $p$-tuples of fixed points of $(\phi_\la)^p$ in the free homotopy class $\al_\la$ grow linearly. This finishes the proof of Proposition \ref{Proposition: example}.

\qed

\section{Alternative approach in dimension $2$}\label{sec-2D}

For Theorem \ref{thm-main} above there is an alternative proof in dimension $2,$ that is when $M$ is a point. We present it here and note that it relies very strongly on two-dimensional methods.

\begin{proof}(A $2d$ proof of Theorem \ref{thm-main} in $2d$)

As in the proof of Propostion \ref{Proposition: example} we fix a large $\la \gg 1,$ and choose an appropriate small smoothing $u$ of $u_0(s) = 1 - |s|,$ and construct $\phi_\la$ as the composition $f_H f_V$ of diffeomorphisms of two annuli embedded in $\Sigma$ as in Figure \ref{fig:two-annuli}, where in each annulus (isomorphic to $C_* = [-1,1] \times \R/ L\R,$ for $L \geq 4$) the diffeomorphism is given by $f(x,[y]) = (x, y + \la u(x)).$ Note that $f$ is the time-one map of the autonomous Hamiltonian isotopy $f^t(x,[y]) = (x, [y + t \cdot \la u(x)]).$

For a free homotopy class $\beta$ of loops in a surface $\Sigma,$ denote by \[\mathrm{si}(\beta) = \min \#\{\mathrm{double\; points\; of} \; b\},\] where the minimum is taken over all immersed loops $b: S^1 \to \Sigma$ in general position with $[b]=\beta.$ This is the geometric self-intersection number of $\beta.$ In particular, $\beta$ is represented by a simple closed curve if and only if $\mathrm{si}(\beta)=0.$ Recall that by Constraint \ref{constraint-1} from Section \ref{subsec-constraints} a Hamiltonian diffeomorphism $\phi$ is not autonomous if it has a non-constant orbit in a primitive class $\al$ that is not simple, i.e. $\mathrm{si}(\al(\phi,z)) > 0$ for a fixed point $z.$ Now consider the diffeomorphism \[\phi_\lambda \in \Ham(\Sigma)\] constructed earlier. Fix coefficients $0 < \mu, \nu < 1,$ with $\frac{\mu}{\nu} \in \Q_{>0}$ to be determined later. Consider all $\la \in \R_{>0},$ such that $m = \frac{\mu}{L} \lambda, n = \frac{\nu}{L} \lambda$ satisfy $m,n \in \Z_{>0}.$ Clearly there are arbitrarily large $\la$ with this property.

Consider the free homotopy class $\beta_\la= \oa^m \ob^n /\mathrm{conj}.$ By \cite{HassScottIntersections} \[\mathrm{si}(\beta_\la) = m\cdot n + (m-1)\cdot(n-1) > 0,\] and hence $\beta_\la$ is not simple.

\begin{clm}\label{Claim: Actions in 2d proof}
There exist $\mu,\nu \in (0,1)$ with $\frac{\mu}{\nu} \in \Q_{>0},$ such that for $\la \gg 1$ the diffeomorphism $\phi_\la$ has exactly $4$ non-degenerate critical orbits with distinct action values in the class $\beta_\la$ and minimal action gap $D_\la = c \cdot \la + O(1),$ as $\la \to \infty.$
\end{clm}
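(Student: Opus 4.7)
My plan is to obtain Claim \ref{Claim: Actions in 2d proof} as the $p=1$ specialization of the analysis already carried out for Proposition \ref{Proposition: example}. The argument should proceed in four steps.

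First I would invoke the $p=1$ case of Lemma \ref{Lemma: fixed points in A}: any fixed point $z$ of $\phi_\la = f_H f_V$ whose orbit represents $\beta_\la$ must lie in the square $A,$ together with its single intermediate point $f_V(z).$ Lifting to the universal cover of $C_V$ and $C_H$ via the flip maps $VH,HV$ and the reduction maps $r_m, r_n,$ the fixed-point equation $\phi_\la z=z$ in the class $\beta_\la$ becomes $\Phi_\la^{\mu,\nu}(x_0,y_0) = (x_0,y_0),$ with $\Phi_\la^{\mu,\nu}$ as in the proof of Proposition \ref{Proposition: example}. Since the prescribed orbit class keeps all intermediate points away from the small neighbourhood $U$ on which $u$ differs from $u_0,$ the equation decouples to
\[
u_0(x_0)=\mu, \qquad u_0(y_0)=\nu,
\]
which has exactly four solutions. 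Allowing for the small correction produced by the true smoothing $u,$ they are
\[
z_{\overrightarrow{\varepsilon}} = \bigl(\varepsilon_1(1-\mu),\,\varepsilon_2(1-\nu)\bigr) + O(1/\la),\qquad \overrightarrow{\varepsilon}=(\varepsilon_1,\varepsilon_2)\in\{\pm 1\}^2.
\]

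Second, non-degeneracy is immediate: $D(\phi_\la)_{z_{\overrightarrow{\varepsilon}}} - \id = A_{\la,1} - \id,$ whose determinant equals $-\varepsilon_1\varepsilon_2\, \la^2 + O(\la) \neq 0$ for $\la\gg 1,$ using formula \eqref{Equation: def of A} with $p=1.$

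Third, specializing the action formula \eqref{Equation: total action.} to $p=1$ (so that $\vareps{1}=\varepsilon_1$ and $\vareps{4}=\vareps{2}$ by the cyclic convention) gives
\[
\A(z_{\overrightarrow{\varepsilon}}) = \frac{\la}{2}\bigl(\varepsilon_1(1-\mu)^2 - \varepsilon_2(1-\nu)^2\bigr) + O(1).
\]
The four leading coefficients are $\pm(1-\mu)^2 \pm (1-\nu)^2.$ Choosing $\mu,\nu \in (0,1) \cap \Q$ with $\mu\neq\nu$ and $\mu/\nu \in \Q$ (e.g.\ $\mu=1/3,\ \nu=2/3$) makes these four numbers pairwise distinct, with minimal gap
\[
2\min\bigl((1-\mu)^2,\ (1-\nu)^2,\ |(1-\mu)^2-(1-\nu)^2|\bigr) > 0,
\]
so that the minimal action gap among the four orbits is $c\la+O(1)$ for some $c>0,$ as required.

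The genuine content sits in the first step: one must guarantee that exactly these four solutions realize the class $\beta_\la$ and that no other periodic behaviour contributes. This is where the two-dimensional method enters essentially, through the topological rigidity of cyclically reduced words in $\pi_1(C)\cong\Foabc$ that underlies Lemma \ref{Lemma: fixed points in A}. Once this rigidity and the resulting decoupled equations $u_0(x_0)=\mu,\ u_0(y_0)=\nu$ are in place, the remaining steps amount to substituting $p=1$ into identities already established in Section \ref{Section: example}.
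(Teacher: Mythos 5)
Your proposal is correct and arrives at the same conclusion as the paper's $2d$ proof, but you take a slightly different organizational route: you obtain the topological localization (all fixed points and intermediate points in class $\beta_\la$ lie in $A$, with orbit class exactly $\oa^m\ob^n$ in $\pi_1(C,0_A)$) by invoking Lemma \ref{Lemma: fixed points in A} and the formulas \eqref{Equation: def of A}, \eqref{Equation: total action.} as $p=1$ specializations, whereas the paper re-derives these facts from scratch in Section \ref{sec-2D} with a direct count of the four types $A\to A\to A$, $A\to B\to A$, $B\to A\to B$, $B\to B\to B$ of intermediate-path patterns and a conjugacy argument for cyclically reduced words in $\Foabc$. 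Your route is more economical since the $p\geq 1$ machinery is already in place; the paper's re-derivation keeps the $2d$ section logically independent. Two small remarks. First, the $p=1$ fixed-point equations are not just ``decoupled to leading order''—they decouple \emph{exactly}: the paper's computation gives $u(x_0)=\mu$ and $u(-y_0)=\nu$, so the four solutions $z_{\overrightarrow{\varepsilon}}=(\varepsilon_1(1-\mu),\varepsilon_2(1-\nu))$ are exact (once one arranges that they avoid the smoothing region $U$, which a suitable choice of $\mu,\nu$ ensures). Your $O(1/\la)$ correction term is unnecessary here, though harmless. Second, you wrote $u_0(y_0)=\nu$ where the computation yields $u(-y_0)=\nu$; this is fine since $u$ is chosen to be even, but the evenness of $u$ deserves an explicit mention at that step. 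With these cosmetic adjustments, your argument matches the paper's.
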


Claim \ref{Claim: Actions in 2d proof} implies that for each $0<c'<c,$ $0<\eps$ and $\la$ sufficiently large, there exists a window $(\A-c' \cdot \la - \eps,\A+c' \cdot \la + \eps),$ with $\A$ the action value of a critical orbit of $\phi_\la$ in class $\beta_\la,$ that does not contain the critical value of any other orbit in this class. Hence the comparison map \[HF^{(\A-c' \cdot \la - \eps,\A+ \eps)}(\phi_\la)_{\beta_\la} \to HF^{(\A - \eps,\A+c' \cdot \la + \eps)}(\phi_\la)_{\beta_\la},\] is an isomorphism for each $0 < \eps_1,\eps_2 \leq \eps$. We claim that this implies that \[d(\phi_\la,\theta) \geq \frac{c'}{2} \cdot \la\] for any autonomous $\theta \in \Ham(M).$ Indeed assume that $d(\phi_\la,\theta) < \frac{c'}{2} \cdot \la.$ Consider the diagram of continuation maps \[HF^{(\A-c' \cdot \la - \eps,\A+\eps)}(\phi_\la)_{\beta_\la} \to HF^{(\A-\frac{c'}{2} \cdot \la - \eps ,\A+\frac{c'}{2} \cdot \la + \eps)}(\theta)_{\beta_\la} \to HF^{(\A - \eps,\A+c' \cdot \la + \eps)}(\phi_\la)_{\beta_\la}.\]

Since by naturality of continuation maps this composition is a comparison map that is by construction an isomorphism, we see that $HF^{(\A-\frac{c'}{2}\cdot \la - \eps ,\A+\frac{c'}{2} \cdot \la + \eps)}(\theta)_{\beta_\la}$ does not vanish, and hence there exists a critical orbit of $\theta$ in the primitive non-simple homotopy class $\beta_\la,$ in contradiction to Constraint \ref{constraint-1}. Note, that here we are working with Floer homology of a degenerate Hamiltonian diffeomorphism, but the implication still holds by continuous dependence on parameters of solutions of ODE's in a fixed time interval.

\begin{pf}(Claim \ref{Claim: Actions in 2d proof})
We calculate the fixed points of $\phi_\la$ in class $\beta_\la$ and their actions. We will prove later that all fixed points $(x_0,y_0)$ of $\phi_\la$ in class $\beta_\la$ lie in the square $A,$ and in fact their class in $\pi_1(C,0_A)$ is exactly $a^m b^n.$ For now, consider all such fixed points. It turns out that the fixed point equation is
\begin{align}
\label{Equation: 2d fixed point 1} & u(x_0) = \mu\\
\label{Equation: 2d fixed point 2} & u(-y_0) = \nu.
\end{align}

Indeed since, as we show below, the intermediate paths $\{f^t_V(z)\}$ and $\{f^t_H f_V(z)\}$ have classes \[[\{f^t_V(z)\}] = a^m, \; [\{f^t_H f_V(z)\}] = b^n\] in $\pi_1(C,A).$ Hence lifting to the universal cover of $C_*$ the fixed point equation in this class translates to \[(x_0,y_0) = HV \circ r_{n} \circ \til{f} \circ VH \circ r_{m} \circ \til{f} (x_0,y_0),\] where  for $p \in \Z,$  \[r_p:(-1,1) \times \R \to (-1,1) \times \R,\] \[r_p(x,y) = (x, y- p\cdot L)\] is the {\em reduction map} that maps $(-1,1) \times (p\cdot L-1,p\cdot L+1)$ isomorphically onto $(-1,1) \times (-1,1)$, and $\til{f}$ is the lift of $f$ to $\til{C}_* = [-1,1] \times \R$ given by the isotopy $\{f^t\}.$ That is $\til{f} (x,y) = (x,y+\la u(x)).$ Let us derive in detail the fixed point equation. We compute \[(x_0,y_0) = HV \circ r_{n} \circ \til{f} \circ VH \circ r_{m} \circ \til{f} (x_0,y_0) = HV \circ r_{n} \circ \til{f} \circ VH (x_0, y_0 + \la u(x_0) - \la \mu) =\]\[ = HV \circ r_{n} \circ \til{f} (-y_0 - \la u(x_0) + \la \mu, x_0)=\]\[ = HV \circ r_{n} (-y_0 - \la u(x_0) + \la \mu, x_0 + \la u(-y_0 - \la u(x_0) + \la \mu)) = \] \[ = HV (-y_0 - \la u(x_0) + \la \mu, x_0 + \la u(-y_0 - \la u(x_0) + \la \mu) - \la \nu) = \]\[= (x_0 + \la u(-y_0 - \la u(x_0) + \la \mu) - \la \nu, y_0 + \la u(x_0) - \la \mu).\]

Hence the fixed point equation reduces to the system of equations
\begin{align}
&\la u(x_0) - \la \mu = 0, \\
&\la u(-y_0 - \la u(x_0) + \la \mu) - \la \nu = 0,
\end{align}

which is clearly equivalent to the system (\ref{Equation: 2d fixed point 1}),(\ref{Equation: 2d fixed point 2}).

A calculation shows that for a sufficiently small smoothing $u$ of $u_0,$ as described in Section \ref{Section: example}, the system (\ref{Equation: 2d fixed point 1}),(\ref{Equation: 2d fixed point 2}) has four solutions \[(x_0,y_0) = z_{\overrightarrow{\varepsilon}}=(\vareps{1}(1-\mu),\vareps{2} (1-\nu)),\] where ${\overrightarrow{\varepsilon}}= (\vareps{1},\vareps{2}) \in \{\pm 1\}^{2}.$ Note that these solutions lie in the region where $u=u_0.$.


Choose $\eta_{\beta_\la}:=\ga(\oa)^{\# m}\# \ga(\ob)^{\# n}$ as the reference loop in the class $\beta_\la.$ An easy calculation shows that the action of the fixed point $z_{\overrightarrow{\varepsilon}}$ corresponding to $\overrightarrow{\varepsilon} = (\vareps{1},\vareps{2}) \in \{\pm 1\}^2$  is \[\A_{H}(z_{\overrightarrow{\varepsilon}}) = \frac{\la}{2}(\vareps{1} (1-\mu)^2 - \vareps{2}(1-\nu)^2).\]
Hence for a proof of Claim \ref{Claim: Actions in 2d proof} it is sufficient to choose $\mu,\nu$ outside the line $\{\mu - \nu = 0\}.$

We leave the rather easy proof of non-degeneracy to the interested reader.

It remains to show that any fixed point $(x_0,y_0)$ in class $\beta_\la$ lies in $A.$ First of all, clearly $(x_0,y_0) \in A \cup B,$ since otherwise, by consideration of the supports of $f_H,f_V,$ one would have had to have $m=0$ or $n=0.$ There are $4$ types of intermediate paths $\{f_V^t(x_0,y_0)\} \# \{f_H^t(x_0,y_0)\},$ $A \to A \to A,$ $A \to B \to A,$ $B \to A \to B,$ $B \to B \to B.$ These orbits have representatives of type $\oa^k \ob^l, \oa^k \oc \ob^l, \oc^{-1}\oa^k \ob^l, \oc \oa^k \oc^{-1} \ob^l$ in $\pi_1(\Sigma)$ respectively (here $k,l$ are generic notation for a pair integers). We see that since we can consider the problem in $\cL (C)$ instead of $\cL \Sigma,$ by a hyperbolic geometry argument, $\oa^m \ob^n,$ being a cyclically reduced word in the free group $Free\langle\oa,\ob,\oc\rangle$ is conjugate to one of these representatives if and only if it is of type $A \to A \to A$ and $k=m,l=n.$ Hence $(x_0,y_0) \in A.$
\end{pf}

This proves Claim \ref{Claim: Actions in 2d proof} and hence finishes the alternative proof in dimension $2.$

\end{proof}

\section{Interaction with the Conley conjecture}\label{Section:Conley}

The goal of this section is to prove Theorem \ref{thm-generic}.

\begin{pf}(Theorem \ref{thm-generic})
For the purposes of this section all $k$-periodic orbits will be assumed contractible. Put $\G = \Ham,$ and for $\phi \in \G$ denote by $\cP^k(\phi)$ the set of all its primitive $k$-periodic orbits. Define \[\G_k := \{\phi \in \G|\; \text{all} \; x \in \cP^k(\phi) \; \text{are non-degenerate} \},\]
\[\G'_k := \{\phi \in \G_k| \; \cP^k(\phi) \; \text{is non-empty}\}.\]

Note that $\G_k$ is an open dense subset of $\G$ in $C^\infty$-topology, and hence \[\G_\infty := \bigcap_{k \geq 1} \G_k\] is a residual subset of $\G.$ Moreover, $\G'_k$ is an open subset of $\G$ and by Constraint \ref{constraint-2} for $k \geq 2,$ \begin{equation}\label{Equation: Conley subset}
\G'_k \subset \Ham \setminus \Aut.
\end{equation} The non-degenerate case of the Conley conjecture \cite{SalamonZehnder92} implies that if $\phi \in \G_1$ then $\cP^k(\phi)$ is non-empty for some $k \geq 2.$ Therefore if $\phi \in \G_\infty$ then, being in particular in $\G_1,$ it lies in $\G'_k$ for some $k \geq 2.$ Thus \[\G_\infty \subset \bigcup_{k \geq 2} \G'_k,\] whence, as $\G_\infty,$ being residual, is dense in the $C^\infty$-topology,   $\bigcup_{k \geq 2} \G'_k$ is dense in the $C^\infty$-topology. Hence by (\ref{Equation: Conley subset}), we conclude that $\Ham \setminus \Aut$ contains an open dense set in the $C^\infty$-topology.

Proposition \ref{Propsotion: Conley} implies that this $C^\infty$ open and dense subset is contained in a Hofer-open subset $\bigcup_{k \geq 2} \{\phi \in \G|\;w_{k,pt_M}(\phi)>0\},$ which by Proposition \ref{Proposition: properties of the invariant} Item \ref{itm:property 4} is in turn contained in $\Ham \setminus \Aut,$ which completes the proof of Theorem \ref{thm-generic}.
\end{pf}

\section{Discussion}\label{Section: discussion}

\subsection{Extension to monotone manifolds}

Consider a spherically monotone symplectic manifold $(M,\om)$ - that is \[\langle [\om],B \rangle = \kappa \cdot \langle c_1(TM), B \rangle\] for all \[B \in \text{Image}(\pi_2(M) \to H_2(M,\Z))\] for a constant $\kappa > 0.$ Assume that $(M,\om)$ is also $\al$-toroidally-monotone for a class $\al \in \pi_0 (\cL M).$ This means that \[\langle [\om],B \rangle = \kappa \cdot \langle c_1(TM), B \rangle\] (the same $\kappa$!) for all \[B \in \text{Image}(\pi_1(\cL_\al M) \to H_2(M,\Z)),\] where the map sends a loop in $\cL_\al M$ to the fundamental class of the associated $T^2$-cycle in $M.$ We make a choice of a base-point $\eta_\al \in \cL_\al M$ and of a trivialization of $\eta_\al^* (TM,S^1)$ as a symplectic vector bundle over $S^1.$

Let $k \geq 2$ be an integer. The Hamiltonian Floer homology $HF_*(\til{\phi}^k)_\al$ in class $\al$ with coefficients in a base field $\K$ can be  defined as a graded module over the Novikov ring (i.e., the ring of semi-infinite Laurent series) $\K[[q^{-1},q],$ where $\deg(q) = 2 N_M,$ twice the minimal Chern number of $(M,\om).$ The filtered Floer homology $HF^{(-\infty,b)}_*(\til{\phi}^k)_\al$ will be a module over  the ring  $\K[[q^{-1}]]$
of formal power series in $q^{-1}$. Noting that since multiplication by $q$ shifts the degree by $2 N_M$ the homology groups $HF^{(-\infty,b)}_r(\til{\phi}^k)_\al$ in a given degree $r$ will be finite dimensional vector spaces over the base field $\K.$ A similar remark holds for degree $r$ Floer homology groups $HF^{(a,b)}_r(\til{\phi}^k)_\al$ in finite action windows.

Thus it is straightforward to see that the considerations of Section \ref{Section: invariant} are fully applicable in this case and give us an invariant $w_{k,\al,r}(\til{\phi})$ and a $\Z_k$ persistence module \[(V_r(k,\al,\til{\phi}),A_r(k,\al,\til{\phi})) = (HF^{(-\infty,\cdot)}_r(\til{\phi}^k)_\al,[\rR_k(\til{\phi})])\] with $A_r(k,\al,\til{\phi})^k = \id.$ Here the $\Z_k$ action  $A_r(k,\al,\til{\phi})=[\rR_k]$ is induced by the loop rotation operator.

Put $\text{Area}(q) = \Om:= \kappa \cdot N_M,$ for the minimal positive symplectic area of a sphere in $M.$ Now we note that multiplication by $q$ on the chain level gives isomorphisms \[(q\cdot): V_r(k,\al,\til{\phi}) \to V_{r+2N_M}(k,\al,\til{\phi})^\Om\] and \[(q\cdot): HF^{(a,b)}_r(\til{\phi})_\al \to HF^{(a,b)+\Om}_{r+2N_M}(\til{\phi})_\al.\] This means, since the definitions of our invariants depend only on action-differences, that $\max_{r \in \Z} (w_{k,\al,r}) = \max_{r \in \Z/(2N_M)} (w_{k,\al,r}).$ Considering the actions of loops in $\Ham$ on Floer homology, which result in isomorphisms up to shifts in grading and in the action-filtration, we see that \[w_{k,\al}(\til{\phi}):= \max_{r \in \Z/(2N_M)} w_{k,\al,r}(\til{\phi})\] depends only on the projection $\phi$ of $\til{\phi}$ to $\Ham.$ This invariant $w_{k,\al}(\phi)$ still satisfies all the necessary properties.

Similarly, considering the actions of loops in $\Ham$ on Floer homology, we see that if $\phi \in \G_p$ has a root $\psi$ of order $p,$ then for any degree $r$ and $k=p,$ prime, the $\Z_p$ persistence module $(V_r(p,\til{\phi},\al),A_r(p,\til{\phi},\al))$ given by the filtered Floer homology of $\til{\phi}^p$ in class $\al$ for any lift $\til{\phi}$ of $\phi$ to $\til{\Ham},$ and the loop rotation operator on it, is a full $p$-th power {$\Z_p$ persistence module}. Indeed, taking a lift $\til{\psi}$ of $\psi$ to $\til{\Ham},$ we obtain a special lift $\til{\psi}^p$ of $\phi,$ whose persistence module $(V_r(p,\til{\psi}^p,\al),A_r(p,\til{\psi}^p,\al))
$ is a full $p$-th power {$\Z_p$ persistence module} (with full root of order $p$ furnished by $A_r(p^2,\til{\psi},\al)$). Moreover, $(V_r(p,\til{\phi},\al),A_r(p,\til{\phi},\al))$ is isomorphic, by the action of a corresponding loop in $\Ham$ to $(V_{r'}(p,\til{\psi}^p,\al)^c,A_{r'}(p,\til{\psi}^p,\al)^c)$ for some $r'$ with a certain shift $c \in \R,$ and is therefore a full $p$-th power {$\Z_p$ persistence module}.

In fact, this property is invariant under shifts of the persistence module, and up to shifts in degree, as explained above, and hence we can consider only a finite number ($2N_M$) of degrees $r.$ Moreover, we see that the multiplicity-sensitive spread $\mu_p(V,A)$ of a $\Z_p$ persistence module $(V,A)$ from Definition \ref{df-sens-spread} is clearly independent of shifts. Hence we conclude that for Hamiltonian diffeomorphism $\phi \in \Ham$ the multiplicity-sensitive spread $\mu_p(\phi)$ is well-defined, and satisfies Theorem \ref{thm-mu2-squares}, giving a lower bound on the Hofer distance between $\phi$ and any $\theta \in \Pow_p.$


It seems likely that this construction applies to negatively monotone symplectic manifolds, for which by a result of Ginzburg-G\"{u}rel \cite{GGNegMon} the Conley conjecture holds. Hence one expects to generalize Theorem \ref{thm-generic} to this case.

As for generalization of Theorems \ref{thm-main} and \ref{thm-main-squares} we consider the following adaptations of our example.

\begin{example}\label{Example: blow-up}
Consider the surface $\Sigma$ of genus at least $4,$ and a symplectically aspherical manifold $N$ of $\dim N = 2n-2.$ Consider the diffeomorphism $\phi_\la$ of $\Sigma$ and its stabilization $\phi_\la \times \id_N.$ Take the one-point size $\Omega$ blow-up \[M=(\mathrm{Bl}_{(x,y)}X,\om_M)\] of \[X=\Sigma \times N\] at a point $(x,y)$ with $x$ outside the union of annuli where $\phi_\la$ is supported. Denote by $E \subset M$ the exceptional divisor. Let $\overline{\al}_\la \in \pi_0(\cL M)$ be the lift of the class $\al \times pt_N \in {\pi_0(\cL(\Sigma \times N)).}$
As $\dim M = 2n,$ the evaluation of $[\om_M]$ on $\text{Image}(\pi_2(M) \to H_2(M,\Z))$ and on $\text{Image}(\pi_1(\cL_{\overline{\al}_\la} M) \to H_2(M,\Z))$ is generated by its values on $\C P_1 \subset E,$ and moreover $\langle [\om_M], \C P^1\rangle = \Omega,$ $\langle c_1(TM), [\C P^1] \rangle = n-1.$

Look at the fixed points of $\phi_\lambda^p$ in the class $\alpha_\la.$ Let their actions be $a_1,...,a_l$ so that $a_i -a_j \sim \lambda,$
and the indices, calculated with respect to a well-chosen trivialization of $T\Sigma$ along the basic cycles $\oa,\ob$ on $\Sigma$, are $I_1,...,I_l.$

The trivialization we use: on one of the two annuli (identified with $T^*S^1$) the trivialization of $T\Sigma$ is given by the vertical Lagrangian distribution, and on the other by the Lagrangian distribution tangent to the zero section (clearly, these two trivializations are homotopic) - it is easy to see that these two trivializations glue well together to a trivialization along $\al_\la.$

Since by the proof of Proposition \ref{Proposition: example}, Equation \eqref{Equation: 2 parabolic matrices}, the linearization of $\phi_\la^p$ at each fixed point is the product of $2p$ parabolic $2\times 2$ matrices, and since multiplication with $\id_N$ contributes a finite correction to Conley-Zehnder indices, there exists $C > 0$ such that $|I_j| \leq C$ for all $j.$ The crucial point
here is that $C$ is independent of $\lambda.$

Recall that $\Omega$ stands for the area of $\C P^1$ in the exceptional divisor.

Now fix degree $r =I_1,$ for instance. Observe that the actions with this index are
$a_1,$ as well as $a_k + \Omega \cdot q_k,$ where $q_k$ is the solution of $I_1= I_k + 2(n-1)q_k,$ $k \neq 1.$
Observe that $q_k$ are bounded and thus the differences $|a_1 - (a_k + \Omega \cdot q_k)|$ are of order $\lambda.$ It follows that in the degree $r:= I_1$ we have that $w_{p,\alpha,r}(\til{\phi}) \sim \lambda,$ and similarly, $\mu_p(\phi) \sim \la,$ as required.
\end{example}

\begin{example}
Using the same argument as in Example \ref{Example: blow-up}, we can show analogues of Theorems \ref{thm-main}, \ref{thm-main-squares} for the manifold $\Sigma \times N,$ where now $N$ is any spherically monotone symplectic manifold. The diffeomorphisms we take are stabilizations $\phi_\la \times \id_N$ of $\phi_\la$ from Section \ref{Section: example}, and the class is $\al_\la \times pt_N.$
\end{example}

It would be interesting to see how the methods of this paper extend further to non-monotone symplectic manifolds.

\subsection{Spectral spread and persistence modules}\label{Subsection: spread and persistence modules}
We start with a modified and simplified version of the spectral spread $w_{k,\alpha}$.
The modification affects the choice of the windows in Definition \ref{Definition: the invariant}: instead of arbitrary windows $(a,b)$ we work with semi-infinite windows
$(-\infty, b)$. Let us spell this out.

For $k \in \Z, \; k\geq 1,$ and a Hamiltonian $H \in \cH$ we make the following definition. Put \[\rS_k:=\rT_k -\id \;,\] where $T_k$ is the loop rotation operator. Composing $\rS_k$ with the comparison map $j_d: HF^{(-\infty ,b)}(\Hk)_\al \to HF^{(-\infty ,b+d)}(\Hk)_\al,$ we obtain a map \[j_d \circ \rS_k = \rS_k \circ j_d: HF^{(-\infty,b)}(\Hk)_\al \to HF^{(-\infty,b+d)}(\Hk)_\al.\]

\begin{df}\label{Definition: the invariant-1}(Modified spread)
\[\hat{w}_{k,\al}(H):= \sup \{d \geq 0: \; j_d \circ \rS_k \neq 0 \; \text{for some window} \; (-\infty,b)\}\]
\end{df}

Note that $\hat{w}_{k,\al} \leq w_{k,\al}$. Nevertheless our proof of Theorem \ref{thm-main} remains
unchanged if one uses $\hat{w}$ instead of $w$.

The main point of this section is that the modified spectral spread $\hat{w}_{k,\alpha}$ admits a concise description in terms of persistent homology (see \ref{Subsection: enriched persistence modules}) In what follows we use Floer homology with coefficients
in any field $\K$.

In the context of Floer homology and loop rotation operators, we have to deal with persistence modules
equipped with an extra data, the family of morphisms $A_t: V_t \to V_t$. Furthermore, we assume that

\begin{equation}\label{eq-As-commute}
A_t \circ \theta_{st}= \theta_{st} \circ A_s \;\; \forall s < t\;.
\end{equation}

We denote the enriched persistence module by  $(V,\theta;A)$.

\medskip
\noindent \begin{df}\label{Definition: algebraic spread}
The spread $\hat{w}(V,\theta;A)$ is defined
as the supremum of $d>0$ such that for some $s \in \R$
the map $\theta_{s,s+d}(A_s-\id) \neq 0$.
\end{df}

It turns out that the spread can be expressed in terms of the barcode of an auxiliary
persistence module which is defined as follows. Put $W_t = \{v \in V_t \;:\; A_tv_t =v_t\}$ and $L_t = V_t/W_t$.
We write $p_t: V_t \to L_t$ for the natural projection.
By \eqref{eq-As-commute} $\theta_{st}(W_s) \subset W_t$,
and hence the morphism $\theta_{st}$ descends to a morphism $\pi_{st}: L_s \to L_t$.
We get a new persistence module $(L,\pi)$.

\medskip

Put $\beta(L,\pi)$ for the longest finite bar in the bar-code of the persistence module $(L,\pi).$ This notion is related to Usher's notion of boundary depth (cf. \cite{UsherBoundaryHofer}).

\medskip
\noindent
\begin{thm}\label{thm-barcode} $\hat{w}(V,\theta;A) = \beta(L,\pi).$
\end{thm}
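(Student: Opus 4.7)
\begin{pfs}
The plan is to identify the spread $\hat{w}(V,\theta;A)$ with the longest interval on which the comparison maps $\pi_{s,t}: L_s \to L_{s+d}$ in the auxiliary module $(L,\pi)$ are nonzero, and then read off the latter quantity from the structure theorem.

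The first step is the key algebraic observation. From the commutation relation \eqref{eq-As-commute} one has
\[
\theta_{s,s+d} \circ (A_s - \id) \;=\; (A_{s+d} - \id) \circ \theta_{s,s+d}.
\]
Since $W_{s+d} = \ker(A_{s+d}-\id)$ is precisely the kernel of the projection $p_{s+d}: V_{s+d} \to L_{s+d}$, the right-hand side is nonzero if and only if $p_{s+d} \circ \theta_{s,s+d} \neq 0$. But $p_{s+d}\circ \theta_{s,s+d} = \pi_{s,s+d}\circ p_s$, and $p_s$ is surjective, so this is equivalent to $\pi_{s,s+d} \neq 0$. Taking supremum over $s$ and $d$, we obtain the identity
\[
\hat{w}(V,\theta;A) \;=\; \sup\{\,d > 0 \;:\; \pi_{s,s+d}\neq 0 \text{ for some } s\in\R\,\}.
\]

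The second step is to evaluate the right-hand side using the structure theorem for $(L,\pi)$. Decompose $L \cong \bigoplus_{j} Q(I_j)^{m_j}$ with $I_j=(a_j,b_j]$. For each summand $Q(I)$ with $I=(a,b]$, the persistence morphism $\pi^{Q(I)}_{s,t}$ is the identity when $a<s\le t\le b$ and zero otherwise. Since a direct sum of morphisms is nonzero iff at least one summand is nonzero, we have $\pi_{s,t} \neq 0$ iff some $I_j$ contains both $s$ and $t$. Therefore
\[
\sup\{\,t-s \;:\; \pi_{s,t}\neq 0\,\} \;=\; \sup_{j}\,(b_j-a_j),
\]
which is precisely the length of the longest bar in the barcode of $(L,\pi)$; under the standing interpretation that the supremum of finite bars gives $\beta(L,\pi)$ (with the convention that an infinite bar contributes $+\infty$ on both sides), this equals $\beta(L,\pi)$, completing the proof.

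There is essentially no hard step: the content is the bijective passage between ``the shifted image of $A_s-\id$ is nonzero'' and ``the quotient module $L$ has a nontrivial comparison morphism'', which follows directly from the equivariance \eqref{eq-As-commute} together with the definition $W_t = \ker(A_t - \id)$. The only point that requires a small amount of care is making sure the argument is symmetric in $s$ and $s+d$, namely that the factorization of $A_s-\id$ through $V_s / W_s = L_s$ is compatible with the one of $A_{s+d}-\id$ through $L_{s+d}$; this is exactly what the commutation relation guarantees. Everything else is a direct appeal to the structure theorem recalled in Section~\ref{Subsection: enriched persistence modules}.
\end{pfs}
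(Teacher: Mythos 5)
Your proof is correct and takes essentially the same route as the paper's: both establish the equivalence $\theta_{st}(A_s-\id)\neq 0 \Leftrightarrow \pi_{st}\neq 0$ via the commutation relation \eqref{eq-As-commute}, the identification $W_t = \ker(A_t-\id) = \ker p_t$, and the surjectivity of $p_s$, and then read off the result from the barcode of $(L,\pi)$. The only difference is cosmetic: you make explicit the appeal to the structure theorem (Theorem~\ref{Theorem: structure of persistence modules}) that the paper's one-line conclusion leaves implicit.
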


\begin{proof} Note that for $s < t$ and $v \in V_s$
$$\theta_{st}(A_s-\id)v = (A_t -\id)\theta_{st}v  \neq 0 \Leftrightarrow $$
$$A_t\theta_{st}v \neq \theta_{st}v \Leftrightarrow \theta_{st}v \notin W_t  \Leftrightarrow p_t(\theta_{st}(v))= \pi_{st}(p_s(v)) \neq 0\;.$$
This yields
$$\theta_{st}(A_s-\id) = (A_t -\id)\theta_{st}  \neq 0 \Leftrightarrow \pi_{st} \neq 0\;.$$ Note that the reverse implication holds since $p_s$ is onto. Hence $\hat{w}(V,\theta;A) = \beta(L,\pi).$
\end{proof}

%

\medskip
\begin{problem}
It would be interesting to extract the invariant $w_{k,\al}$ in a similar way from two-parametric persistence modules.
\end{problem}

\medskip

It is easy to see that for two persistence modules $K,L,$ \[|\beta(K) - \beta(L)| \leq 2 d_{bottle}(\cB(K),\cB(L)),\] which shows (cf. Equation \eqref{eq-dist-inter-shad-1}) that for an integer $k \geq 2,$ $\al \in \pi_0(\cL M),$ and $\phi \in \cG_k,$ the map $\phi \mapsto \hat{w}_{k,\al}(\phi) = \hat{w}(V(k,\al,\phi),A(k,\al,\phi))$ is Lipschitz with constant $2k$ with respect to the Hofer metric. However, by considering continuation maps directly and separating the positive and the negative parts of the Hofer metric, we show in Proposition \ref{Proposition: properties of the invariant}, Item \ref{itm:property 3} that in fact it is Lipschitz with constant $k.$ It would be interesting to modify the bottleneck distance on the space of barcodes to account for this discrepancy.

\bs
\noindent{\bf Acknowledgments.} We are grateful to Paul Seidel for his generous help with this
paper (see Section \ref{subsec-constraints} above). We thank Mohammed Abouzaid, Paul Biran, Strom Borman, Fr\'{e}d\'{e}ric Bourgeois, Octav Cornea, Maia Fraser, Michael Khanevsky, Michael Usher, Shmuel Weinberger and Jun Zhang for useful discussions.

Our understanding of persistent homology profited from lectures by Yaniv Ganor, Asaf Kislev and Daniel Rosen at a guided reading course delivered by L.P. at Tel Aviv University.
We thank all of them.

Parts of this paper have been written during L.P.'s stay at the University of Chicago and ETH-Z\"{u}rich, and E.S.'s stay at the Hebrew University of Jerusalem and CRM, University of Montreal. We thank these institutions for their warm hospitality.

Preliminary results of this paper have been presented at symplectic workshops in the Lorentz Center, Leiden (Summer, 2014) and in the Clay Institute, Oxford (Fall, 2014). We are indebted to the organizers, Hansj\"{o}rg Geiges, Viktor Ginzburg, Federica Pasquotto and Dominic Joyce, Alexander Ritter and Ivan Smith, respectively, for this opportunity.

\medskip

\bibliographystyle{alpha}

\bigskip

\noindent
\begin{tabular}{ll}
Leonid Polterovich & Egor Shelukhin\\
School of Mathematical Sciences & CRM\\
Tel Aviv University & University of Montreal\\
polterov@post.tau.ac.il & egorshel@gmail.com \\
\end{tabular}

\end{document}